\newcolumntype{L}{>{\centering\arraybackslash}m{1.5cm}}
\numberwithin{equation}{section}
\newtheorem{theorem}{Theorem}
\newtheorem{lemma}{Lemma}
\theoremstyle{definition}
\newtheorem{examples}[theorem]{Example}
\theoremstyle{remark}
\newcommand{\RN}[1]{\uppercase\expandafter{\romannumeral#1}}
\begin{document}
\title[Hartman--Wintner Growth Rates of Sublinear FDEs]
{Hartman--Wintner Growth Results for Sublinear Functional Differential Equations}

\author{John A. D. Appleby}
\address{School of Mathematical
Sciences, Dublin City University, Glasnevin, Dublin 9, Ireland}
\email{john.appleby@dcu.ie} \urladdr{webpages.dcu.ie/\textasciitilde
applebyj}

\author{Denis D. Patterson}
\address{School of Mathematical
Sciences, Dublin City University, Glasnevin, Dublin 9, Ireland}
\email{denis.patterson2@mail.dcu.ie} \urladdr{sites.google.com/a/mail.dcu.ie/denis-patterson}

\thanks{Denis Patterson is supported by the Government of Ireland Postgraduate Scholarship Scheme operated by the Irish Research Council under the project GOIPG/2013/402.} 
\subjclass[2010]{Primary: 34K25; Secondary: 34K28.}
\keywords{Functional differential equations, Volterra equations, asymptotics, subexponential growth, bounded delay, unbounded delay}
\date{1st December 2016}
\begin{abstract}
In this paper, we determine rates of growth to infinity of scalar autonomous nonlinear functional and Volterra differential equations. In these equations, the right-hand side is a positive continuous linear functional of a nonlinear function of the state. We assume the nonlinearity grows sublinearly at infinity, leading to subexponential growth in the solutions. Our main results show that the solutions of the functional differential equations are asymptotic to those of an auxiliary autonomous ordinary differential equation when the nonlinearity grows more slowly than a critical rate. If the nonlinearity grows more rapidly than this rate, the ODE dominates the FDE. If the nonlinearity tends to infinity at exactly this rate, the FDE and ODE grow at the same rate, modulo a constant non-unit factor. Finally, if the nonlinearity grows more slowly than the critical rate, then the ODE and FDE grow at the same rate asymptotically. We also prove a partial converse of the last result. In the case when the growth rate is slower than that of the ODE, we calculate sharp bounds on the solutions.
\end{abstract}
\maketitle 
\section{Introduction}
We investigate growth rates to infinity of solutions to nonlinear autonomous functional and Volterra differential equations of the form
\begin{equation} \label{eq.fde}
x'(t) = \int_{[-\tau,0]}\mu(ds)f(x(t+s)), \quad t >0;\quad\quad x_0 = \psi \in C([-\tau,0];(0,\infty)), 
\end{equation}
and 
\begin{equation} \label{eq.vde}
x'(t)=\int_{[0,t]} \mu(ds)f(x(t-s)), \quad t\geq 0;\quad x(0)=\psi>0.
\end{equation}
Concentrating momentarily on \eqref{eq.fde}; we suppose that $\tau>0$ and $\mu$ is a positive finite Borel measure on $[-\tau,0]$, so $\mu(E)\in [0,\infty)$ for all Borel sets $E\subseteq[-\tau,0]$ and $\mu([-\tau,0])=:M\in (0,\infty)$. In the case of \eqref{eq.vde}, we have $M:=\mu([0,\infty))$. If $f$ is positive, by the Riesz representation theorem, \eqref{eq.fde} is equivalent to $x'(t)=L([f(x)]_t)$ for $t>0$, where $L$ is a positive continuous linear functional from $C([-\tau,0];\mathbb{R}^+)$ to $\mathbb{R}^+$. Uniqueness of a continuous solution of \eqref{eq.fde} or \eqref{eq.vde} is guaranteed by asking that $f$ is continuously differentiable (see \cite{gripenberg1990volterra} for existence results and properties of measures); positivity of solutions is guaranteed by the positivity of $\mu$ and $f$ on $[0,\infty)$. Non--explosion of solutions in finite time, as well as subexponential growth to infinity (in the sense that $\log x(t)/t\to 0$ as $t\to\infty$), follows from the hypothesis that $f'(x)\to 0$ as $x\to\infty$.

When $f$ is a positive continuous function such that 
\begin{equation} \label{fasym}
\text{there exists $\phi\in \mathcal{S}$ such that $f(x) \sim \phi(x)$ as $x\to\infty$}
\end{equation}
where $\mathcal{S}$ is the class 
\begin{align} \label{def.smoothsublinear}
\mathcal{S}=\{\,\, &\phi\in C^1((0,\infty);(0,\infty))\cap C(\mathbb{R}^+,(0,\infty)):
\text{$\lim_{x\to\infty}\phi'(x)=0$ and $\phi'(x) > 0$ for all $x>0$} \,\,\},
\end{align}
then 
\[
\lim_{t\to\infty} \frac{F(x(t))}{t}=M,
\]
where 
\begin{equation} \label{def.F}
F(x) = \int_1^x \frac{1}{f(u)}du, \quad x>0
\end{equation}
(see \cite{applebypatterson2016growth} for further details). Furthermore,  
\begin{equation} \label{eq.RVHWcondn}
\limsup_{x\to\infty} \frac{f(x)F(x)}{x}<+\infty
\end{equation}
implies 
\[
\lim_{t\to\infty} \frac{x(t)}{F^{-1}(Mt)}=1.
\]

The theorems stated above develop results in \cite{subexpRV} which require coefficients to be regularly varying at infinity, and consider only a single fixed delay. Since we refer often to the class of regularly varying function, we remind the reader of the definition (see \cite{BGT}): a measurable function $g:(0,\infty)\to (0,\infty)$ is regularly varying at infinity with index $\beta\in \mathbb{R}$ if $g(\lambda t)/g(t)\to \lambda^\beta$ as $t\to\infty$, for every $\lambda>0$, and we write $g\in \text{RV}_\infty(\beta)$. 

Therefore, under \eqref{eq.RVHWcondn}, the rates of growth of solutions of \eqref{eq.fde} and of 
\begin{equation} \label{eq.equivode}
y'(t)=Mf(y(t)), \quad t>0; \quad y(0)=y_0>0
\end{equation}
are the same, in the sense that $x(t)/y(t)\to 1$ as $t\to\infty$. The non--delay equation \eqref{eq.equivode} can be considered as a special type of equation \eqref{eq.fde} in which all the mass of $\mu$ is concentrated at $0$. On the other hand, if $f$ is linear, collapsing the mass of $\mu$ to zero generates different rates of (exponential) growth in the solutions of \eqref{eq.fde} and \eqref{eq.equivode}. The condition \eqref{eq.RVHWcondn} holds for $f\in \text{RV}_\infty(\beta)$ where $\beta<1$, but does not hold if $f$ is in $\text{RV}_\infty(1)$. Therefore, the phenomenon that solutions of \eqref{eq.equivode} yield the growth rate of those of \eqref{eq.fde} ceases for some critical rate of growth of $f$ faster than functions in $\text{RV}_{\infty}(\beta)$ for $\beta<1$, but slower than linear. 

In \cite{subexp}, the authors showed (under some technical conditions) that the critical growth rate is $\text{O}(x/\log x)$: more precisely, if we define     
\begin{equation} \label{eq.lambda}
\lambda := \lim_{x \to \infty}\frac{f(x)}{x/\log(x)} \in [0,\infty],
\end{equation}
and $C:=\int_{[-\tau,0]} |s|\mu(ds)$, then 
\begin{equation} \label{eq.hwresult}
\lim_{t\to\infty} \frac{x(t)}{y(t)} =e^{-\lambda C},
\end{equation}
provided $f$ is ultimately increasing and $f'\in \text{RV}_\infty(0)$, a hypothesis stronger than, but implying $f\in \text{RV}_\infty(1)$. In this paper one of our main results (Theorem~\ref{thm.2.2}) extends the results from \cite{subexp} by removing entirely the assumption that $f'\in \text{RV}_\infty(0)$: instead, we assume that $f\in \mathcal{S}$ (with $\mathcal{S}$ as in \eqref{def.smoothsublinear}). As mentioned above 
\begin{equation} \label{eq.hgresult}
\lim_{t\to\infty} \frac{F(x(t))}{t}= M,  \quad \lim_{t\to\infty}\frac{F(y(t))}{t}=M.
\end{equation}
In the linear case, the asymptotic relation \eqref{eq.hgresult} would mean that $x$ and $y$ share the same Liapunov exponent, but would not necessarily obey $x(t)\sim Ky(t)$ as $t\to\infty$. Therefore our results identify a subtle distinction in the growth rates of $x$ and $y$, which are in some sense closer than Hartman--Grobman type of asymptotic equivalence embodied by \eqref{eq.hgresult}. By 
contrast, the relation \eqref{eq.hwresult} is in the spirit of 
a Hartman--Wintner type--result (see \cite[Cor X.16.4]{H}, \cite{HW}). 
We note of course, that there is a huge literature in asymptotic integration and Hartman--Wintner type--results in determining the asymptotic behaviour of functional differential equations (see e.g., \cite{arino1983asymptotic,arino1989asymptotic, castillo2004asymptotics,graef,haddock1980stability,KO,pinto1993asymptotic,pituk} and the introductions of \cite{castillo2004asymptotics,nesterov2013asymptotic} for reviews of the development of the literature to date). However, most work in the literature is concerned with equations whose leading order behaviour is linear, with perturbed terms either being nonautonomous, or of smaller than linear order. 
In our work, as $f(x)/x\to 0$ as $x\to\infty$, no leading order linear behaviour is present, necessitating a different approach.  

When $\lambda=+\infty$, equation \eqref{eq.hwresult} reads $x(t)=o(y(t))$ as $t\to\infty$. However, we are still able to determine the rate of growth relatively precisely in this case, under the additional assumption that $f'$ is decreasing. In Theorem~\ref{thm.2.2.1.1} we show that
\[
x(t)=F^{-1}(Mt - c(t)\log F^{-1}(Mt)), \quad t\geq 1,
\]
where $c$ is a $C^1$ function such that $c(t)\to C$ as $t\to\infty$. 

We also prove results for the Volterra differential equation \eqref{eq.vde} 
where $\mu\in M([0,\infty);\mathbb{R}^+)$. In this case, with $\lambda$ defined by \eqref{eq.lambda}, we obtain 
\begin{equation} \label{eq.hwresult2}
\lim_{t\to\infty} \frac{x(t)}{y(t)} =\exp\left(-\lambda\int_{[0,\infty)} s\mu(ds)\right),
\end{equation}
except possibly in the case when $\lambda=0$ and 
\[
\int_{[0,\infty)} s\mu(ds)=+\infty
\]
(see Theorem~\ref{thm.2.3}). In this last case, we provide necessary and sufficient conditions under which $x(t)/y(t)\to 1$ or $x(t)/y(t)\to 0$ as $t\to\infty$ (Theorem~\ref{thm.2.4}). We do not believe that the sufficient conditions given in Theorem~\ref{thm.2.4} are sharp in general. Hence, when $f$ is regularly varying with unit index at infinity and $C=+\infty$, we provide what we believe is a sharp necessary condition under which $x(t)/y(t)\to 1$ as $t\to\infty$ in Theorem~\ref{thm.2.5}.

For both \eqref{eq.fde} and \eqref{eq.vde}, in the case when the first moment of the measure $\mu$ is finite, we show that the critical growth rate $f(x)=o(x/\log x)$ as $x\to\infty$ is a sharp condition to obtain $x(t)/y(t)\to 1$ as $t\to\infty$. More
precisely in Theorem~\ref{thm.2.2.1} we see that when $f'$ is decreasing, then $f(x)=o(x/\log x)$ as $x\to\infty$ and $x(t)/y(t)\to 1$ as $t\to\infty$ are equivalent.

The structure of the paper is as follows: in Section~\ref{sec.mainresultsHW}, we state and discuss the main results of the paper. Section~\ref{sec.HWexamples} contains examples. An important lemma which allows direct asymptotic information about the solution to be deduced is given in Section~\ref{sec.asylemma}. The remaining sections of the paper are devoted to the proofs of the main results.
   

\section{Main Results} \label{sec.mainresultsHW}
In what follows, we interpret 
\[
e^{-\infty}:=0
\]
in order to streamline the statement of results. 
We first state our main result for the solution of the functional differential equation \eqref{eq.fde}.
\begin{theorem}  \label{thm.2.2}
Let $f(x)>0$ for all $x>0$, $f'(x)>0$ for all $x > x_1$, $f'(x) \to 0$ as $x \to \infty$. 
Suppose $f$ obeys \eqref{eq.lambda}, let $\tau>0$, $\mu\in M([-\tau,0];\mathbb{R}^+)$ be a positive finite Borel measure,
with 
\[
M := \int_{[-\tau,0]} \mu(ds), \quad C := \int_{[-\tau,0]} |s|\mu(ds), 
\] 
$F$ is defined by \eqref{def.F}, and $x$ is the unique continuous solution $x$ of  \eqref{eq.fde}.
Then 
\[
\lim_{t\to\infty} x(t)=+\infty, \quad
\lim_{t\to\infty} \frac{F(x(t))}{t}=M,
\]
and moreover 
\begin{equation} \label{eq.xasy}
\lim_{t \to \infty}\frac{x(t)}{F^{-1}(Mt)} = e^{-\lambda C}.
\end{equation}
\end{theorem}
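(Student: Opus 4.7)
The plan is to derive a refined asymptotic expansion of the form
\[
F(x(t)) = Mt - C\log f(x(t)) + o(\log x(t)), \quad t\to\infty,
\]
and then invert $F$ (using the regularity of $f$ at infinity encoded by \eqref{eq.lambda}) to obtain \eqref{eq.xasy}. By the cited companion paper we already know $x(t)\to\infty$ and $F(x(t))/t\to M$, so the work is in pinning down the correction term of order $\log x(t)$ that distinguishes the FDE from the ODE \eqref{eq.equivode}. Dividing the FDE by $f(x(t))$ and integrating gives the exact identity
\[
F(x(t))-F(\psi(0)) = Mt + \int_0^t \int_{[-\tau,0]} \mu(ds)\, \frac{f(x(u+s))-f(x(u))}{f(x(u))}\,du,
\]
so everything reduces to extracting the leading order of the double integral.

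The first key step is to show that for each fixed $s\in[-\tau,0]$ and large $u$,
\[
f(x(u+s))-f(x(u)) = \int_{s}^{0} f'(x(u+r))\,x'(u+r)\,dr \;\approx\; s\,M\,f'(x(u))\,f(x(u)),
\]
because $x'\sim Mf(x)$ in an averaged sense, $f'(x(u+r))\to0$, and $\tau$ is fixed. Integrating against $\mu(ds)$ and dividing by $f(x(u))$ produces $-MCf'(x(u))$ up to lower order. The technical content here is to show that the error terms, uniform in $s\in[-\tau,0]$, integrate to $o(\log x(t))$ using only $f\in\mathcal S$ (and not the stronger assumption $f'\in\mathrm{RV}_\infty(0)$ used in \cite{subexp}); this is where the deterministic Lemma hinted at in Section~\ref{sec.asylemma} should do the heavy lifting, by converting the averaged information $F(x(t))/t\to M$ into usable pointwise-in-time control on ratios like $x(u+s)/x(u)$ and $f(x(u+s))/f(x(u))$.

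The second step converts $\int_0^t f'(x(u))\,du$ into a closed expression. Using $(\log f\circ x)'(u) = f'(x(u))x'(u)/f(x(u))$, together with the averaged relation $x'(u)/f(x(u))\to M$ coming from $F(x(t))/t\to M$, one gets
\[
M\int_0^t f'(x(u))\,du = \log f(x(t)) + o(\log f(x(t))),
\]
again the difference between the exact and averaged identity being controlled by the Lemma of Section~\ref{sec.asylemma}. Combining the two steps yields $F(x(t)) - Mt = -C\log f(x(t)) + o(\log x(t))$. Under \eqref{eq.lambda} with $\lambda\in(0,\infty)$, $\log f(x)\sim \log x$, so $F(x(t)) = Mt - C\log x(t) + o(\log x(t))$.

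To convert this into \eqref{eq.xasy}, set $y(t):=F^{-1}(Mt)$, so $F(y(t))=Mt$. Then
\[
F(x(t))-F(y(t)) = \int_{y(t)}^{x(t)} \frac{du}{f(u)} = -C\log x(t) + o(\log x(t)).
\]
Because $f(x)\sim \lambda x/\log x$, we have $u/f(u)\sim \log u/\lambda$, and hence for any $\alpha>0$ fixed, $\int_{\alpha y(t)}^{y(t)} du/f(u) \sim -\log(\alpha^{-1})\cdot\log y(t)/\lambda$. Matching this to the expression above forces $x(t)/y(t)\to e^{-\lambda C}$. The case $\lambda=0$ is easier: the correction term is subdominant so $x(t)/y(t)\to 1=e^0$. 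The case $\lambda=+\infty$ must be treated separately (with $e^{-\lambda C}=0$): here one shows $x(t)=o(y(t))$ by a comparison argument based on the sub-unit slope of $F\circ x$ against $F\circ y$ once $f$ is sufficiently close to linear.

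The main obstacle, I expect, is the first step: obtaining the asymptotic equivalence
\[
\int_{[-\tau,0]} \mu(ds)\,\frac{f(x(u+s))-f(x(u))}{f(x(u))} = -MCf'(x(u))\,\bigl(1+o(1)\bigr)
\]
integrated over $[0,t]$ while only assuming $f\in\mathcal S$. The usual regular-variation shortcut (using $f'\in\mathrm{RV}_\infty(0)$, under which $f'(x(u+s))/f'(x(u))\to 1$ uniformly in $s$) is unavailable, and instead one must exploit the Lemma of Section~\ref{sec.asylemma} to transfer the Cesàro-type information $F(x(t))/t\to M$ to pointwise asymptotic control on the delayed increments. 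Once that control is in hand, the remaining calculations are standard substitutions.
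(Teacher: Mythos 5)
Your overall architecture matches the paper's: write the FDE as a perturbation of the ODE \eqref{eq.equivode}, prove the implicit relation $F(x(t))-Mt\sim -C\log f(x(t))$, then invert. Your second step (telescoping $M\int_0^t f'(x(u))\,du$ into $\log f(x(t))$ via the pointwise limit $x'(u)/f(x(u))\to M$) and your final matching argument (which, made rigorous with limsup/liminf subsequence contradictions and the a priori bound $\limsup x(t)/F^{-1}(Mt)\le 1$, is exactly the content of Lemma~\ref{lemma.l1}) are both sound. The gap is in your first step.

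You propose the pointwise-in-$u$ approximation
\[
\int_s^0 f'(x(u+r))\,x'(u+r)\,dr \;\approx\; s\,M\,f'(x(u))\,f(x(u)),
\]
which requires $f'(x(u+r))/f'(x(u))\to 1$ uniformly for $r\in[-\tau,0]$. That is precisely the step that forces the hypothesis $f'\in\text{RV}_\infty(0)$ in \cite{subexp}, and it is not available for general $f\in\mathcal S$: knowing only $f'>0$ and $f'(x)\to 0$ gives no control on ratios of $f'$ at nearby arguments, since $f'$ may oscillate on the scale traversed by $x$ over one delay interval. Nor does the lemma of Section~\ref{sec.asylemma} rescue you: Lemma~\ref{lemma.l1} converts the already-established implicit relation \eqref{eq.keylimit1} into \eqref{eq.xasy}; it says nothing about delayed increments of $f'$ or about transferring Ces\`aro information to pointwise control. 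The paper's way around this is to never make the pointwise claim. Writing $\delta(t)=\int_{t-\tau}^t\tilde M(t-s)f'(x(s))x'(s)\,ds$ with $\tilde M(y)=\int_{[-\tau,-y]}\mu(ds)$, one sandwiches $x'(s)$ between $M(1-\epsilon)f(x(s))$ and $Mf(x(s))$, integrates $\delta(u)/(Mf(x(u)))$ over $[T,t]$ \emph{first}, and interchanges the order of integration; the delay kernel then integrates out exactly to $\int_0^\tau\tilde M(v)\,dv=C$, leaving $MC\int f'(x(u))\,du$ plus boundary contributions on $[t-\tau,t]$ that are controlled by positivity and the bound $\int_0^{t-u}\tilde M(v)\,dv\le C$. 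In this way only the integrated quantity $\int f'(x(u))\,du$ ever needs to be estimated, and that is exactly your (correct) second step. You should restructure step one along these lines; as written it does not close.
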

The proof of this result, and others like it, consists of two main steps. The first step is to show that $x$ obeys 
\begin{equation}  \label{eq.keylimit1}
\lim_{t\to\infty} \frac{F(x(t))-Mt}{\log f(x(t))}=-C.
\end{equation}
Equation \eqref{eq.keylimit1} is also true for solutions of the Volterra equation \eqref{eq.vde}, even when the first moment of the measure in that case is infinite. A key step in proving \eqref{eq.keylimit1} is to rewrite \eqref{eq.fde} in the form  
\begin{equation*} 
x'(t)=Mf(x(t))-\int_{[-\tau,0]} \mu(ds) \{f(x(t))-f(x(t+s))\}=:Mf(x(t))-\delta(t),
\end{equation*}
thereby viewing \eqref{eq.fde} as a perturbation of \eqref{eq.equivode}. Clearly, if the perturbed term $\delta$ (which will be positive for large $t$, by the monotonicity of $x$ and $f$) is small relative to $Mf(x(t))$, we may expect $x(t)/y(t)$ to tend to a finite limit. The first main task is therefore to determine precise asymptotic information on $\delta$. 

Remarkably, in spite of the path dependence of $x$ in $\delta$, we show that $\delta(t)\sim -C\log f(x(t))$ as $t\to\infty$, and from this \eqref{eq.keylimit1} readily follows. The second step in the proof of Theorem~\ref{thm.2.2} can be found in  Lemma~\ref{lemma.l1} and involves viewing the limit in \eqref{eq.keylimit1} as a pair of asymptotic inequalities, from which the implicit asymptotic information about $x$ can be made explicit, as in \eqref{eq.xasy}.   

We note that under these hypotheses we have $f(x)/x \to 0$ as $x \to \infty$. Since $f$ is ultimately increasing it must either have a finite limit or tend to infinity as $x \to \infty$. In the former case, $x'(t)$ tends to a finite limit, and \eqref{eq.xasy} is trivially true. Hence we assume, without loss of generality, in all the results and proofs below that  $f(x)\to\infty$ as $x\to\infty$.

We may take $C>0$ in Theorem~\ref{thm.2.2}: the finiteness of the measure automatically ensures that $C$ is finite.
If $C=0$, it must follow that $\mu(ds)=M\delta_{0}(ds)$ a.e. and so \eqref{eq.fde} collapses to the ODE 
\eqref{eq.equivode}, rendering the result trivial. Therefore, it is tacit in this result, and in subsequent theorems for Volterra equations, that the first moment of $\mu$, $C$, is positive. With this in mind, we now see that 
the solution of \eqref{eq.fde} is exactly asymptotic to the solution of \eqref{eq.equivode} when $\lambda=0$, because in this case
\[
\lim_{t\to\infty} \frac{x(t)}{F^{-1}(Mt)}=1.
\]
However, a non--unit limit exists once $\lambda$ is positive or infinite. 

When $\lambda=+\infty$, and $C>0$, we should interpret \eqref{eq.xasy} as 
\[
\lim_{t\to\infty} \frac{x(t)}{F^{-1}(Mt)}=0.
\]
This leads us to ask: can we still get \emph{direct} asymptotic information about the slower rate of growth of $x$ in this case? The next result shows that we can, at the cost of assuming $f'$ is decreasing. 
\begin{theorem} \label{thm.2.2.1.1}
Let $f(x)>0$ for all $x>0$, $f'(x)>0$ for all $x > x_1$, $f'(x) \to 0$ as $x \to \infty$. Suppose $f$ obeys \eqref{eq.lambda},
with $\lambda=+\infty$, and $f'$ is decreasing on $[x_2,\infty)$. Let $\tau>0$, $\mu\in M([-\tau,0];\mathbb{R}^+)$ be a positive finite Borel measure, with 
\[
M := \int_{[-\tau,0]} \mu(ds), \quad C := \int_{[-\tau,0]} |s|\mu(ds)<+\infty, 
\] 
$F$ is defined by \eqref{def.F}, and $x$ is the unique continuous solution $x$ of  \eqref{eq.fde}. Then 
there is a $c\in C^1((1,\infty);\mathbb{R})$ with $\lim_{t\to\infty} c(t)=C$ such that 
\begin{equation}   \label{eq.xasycorrection}
x(t)=F^{-1}\left(Mt - c(t)\log F^{-1}(Mt)\right), \quad t\geq 1.
\end{equation}
\end{theorem}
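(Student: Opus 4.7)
The plan is to invert the identity \eqref{eq.xasycorrection} algebraically to define $c$, and then verify its properties using the asymptotic \eqref{eq.keylimit1} (which holds for solutions of \eqref{eq.fde}). Writing $y(t) := F^{-1}(Mt)$, we have $y(t) > 1$, and hence $\log y(t) > 0$, for every $t > 0$ (since $F(1) = 0$), and I set
\[
c(t) := \frac{Mt - F(x(t))}{\log y(t)}, \qquad t > 0.
\]
Then $F(x(t)) = Mt - c(t)\log y(t)$ by construction, and applying $F^{-1}$ yields \eqref{eq.xasycorrection}. Since $F$, $F^{-1}$ and $x$ are all $C^1$ (the first two because $f \in C^1$ and $f > 0$, the third by \eqref{eq.fde}), $c \in C^1((0,\infty);\mathbb{R})$ and \emph{a fortiori} $c \in C^1((1,\infty);\mathbb{R})$.

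It remains to show $c(t) \to C$. Using \eqref{eq.keylimit1}, factorise
\[
c(t) = \frac{Mt - F(x(t))}{\log f(x(t))} \cdot \frac{\log f(x(t))}{\log y(t)};
\]
the first factor tends to $C$. The hypothesis $f'(x) \to 0$ forces $f(x) = o(x)$, while $\lambda = +\infty$ forces $f(x) \gg x/\log x$, so $x/\log x \le f(x) \le \varepsilon x$ eventually for every $\varepsilon > 0$; taking logarithms gives $\log f(x) \sim \log x$ as $x \to \infty$. It therefore suffices to prove $\log x(t) \sim \log y(t)$.

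For this, introduce $p(t) := \log y(t)$, $q(t) := \log x(t)$, and $G(v) := F(e^v)$; then $G(p(t)) = Mt$ and $G(q(t)) = F(x(t))$, and (since $F(x(t)) - Mt \to -\infty$ by \eqref{eq.keylimit1}) $q(t) < p(t)$ eventually. Combining \eqref{eq.keylimit1} with $\log f(x) \sim \log x$ gives
\[
G(p(t)) - G(q(t)) = Mt - F(x(t)) \sim C\,q(t), \qquad t \to \infty.
\]
Differentiating, $G'(v) = e^v/f(e^v)$; writing $u = e^v$, this equals $u/f(u)$. The hypothesis that $f'$ is decreasing on $[x_2,\infty)$ (so $f$ is concave there) together with $f > 0$ gives, via the tangent-line inequality $f(x_2) \le f(u) + f'(u)(x_2 - u)$ and $f'(u) \to 0$, that $f(u) > u f'(u)$ for all large $u$; hence $\bigl(u/f(u)\bigr)' = \bigl(f(u) - u f'(u)\bigr)/f(u)^2 > 0$ and $G'$ is non-decreasing on a half-line. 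The mean value theorem applied on $[q(t), p(t)]$ then yields $G(p) - G(q) \ge G'(q)(p - q)$, so that
\[
p(t) - q(t) \le \frac{G(p(t)) - G(q(t))}{G'(q(t))} \sim C\,q(t)\,\frac{f(x(t))}{x(t)}.
\]
Dividing by $p(t)$ and using $q(t)/p(t) \le 1$ gives $(p(t)-q(t))/p(t) \le C f(x(t))/x(t) \cdot (1+o(1)) \to 0$, since $f(x)/x \to 0$. Thus $q(t)/p(t) \to 1$, which completes the proof that $c(t) \to C$.

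The main obstacle is the final step of establishing $\log x(t) \sim \log y(t)$. The monotonicity of $u \mapsto u/f(u)$ (equivalently of $G'$) is what converts the mean value estimate into a bound that decays to zero; this monotonicity is exactly where the hypothesis ``$f'$ is decreasing''---strictly stronger than what is assumed in Theorem~\ref{thm.2.2}---is essential.
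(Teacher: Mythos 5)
Your proposal is correct. The skeleton coincides with the paper's: you define $c(t)=(Mt-F(x(t)))/\log F^{-1}(Mt)$ exactly as the paper does, invoke the key limit \eqref{eq.keylimit1} (established in the proof of Theorem~\ref{thm.2.2}), and reduce everything to showing that $\log f(x(t))$ may be replaced by $\log F^{-1}(Mt)$ in the denominator. Where you diverge is in how that replacement is justified. The paper sets $\varphi(x)=\log f(F^{-1}(x))$, checks that $\varphi'=f'\circ F^{-1}$ is ultimately decreasing and tends to zero, and then applies the general asymptotic-equivalence Lemma~\ref{asym_equiv_copy_ap} with $b(t)=F(x(t))\sim Mt=c(t)$ to conclude $\log f(x(t))\sim\log f(F^{-1}(Mt))$, finishing with $\log f(u)\sim\log u$. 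You instead prove $\log x(t)\sim\log F^{-1}(Mt)$ directly: setting $G(v)=F(e^v)$, you use concavity of $f$ (the tangent-line inequality plus $f'(u)\to 0$) to show $u\mapsto u/f(u)$, hence $G'$, is ultimately non-decreasing, and then a mean value estimate gives $p(t)-q(t)\le C\,q(t)\,f(x(t))/x(t)\,(1+o(1))=o(p(t))$. Both routes consume the hypothesis that $f'$ is decreasing at exactly the analogous point (ultimate monotonicity of $\varphi'$ versus of $G'$), so neither is more economical in assumptions; your version has the merit of being self-contained and of making explicit the quantitative gap $\log F^{-1}(Mt)-\log x(t)=O(\log x(t)\cdot f(x(t))/x(t))$, while the paper's version reuses a lemma that also serves Theorems~\ref{thm.2.2.1} and~\ref{thm.2.5}. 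All the small verifications in your argument (positivity of $\log F^{-1}(Mt)$ for $t>0$ since $F(1)=0$, eventual inequality $q(t)<p(t)$ from $F(x(t))-Mt\to-\infty$, and the restriction of the monotonicity of $G'$ to a half-line on which $[q(t),p(t)]$ eventually lies) are correctly handled.
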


The assumption that $f'$ is decreasing is used to show that $\log f(x(t))\sim \log f(F^{-1}(Mt))$ as $t\to\infty$ 
(using Lemma~\ref{asym_equiv_copy_ap}). 
Once this is achieved, \eqref{eq.keylimit1} immediately gives  
\[
\lim_{t\to\infty} -\frac{F(x(t))-Mt}{\log F^{-1}(Mt)}=C,
\]
because $\log f(x)/\log x\to 1$ as $x\to\infty$ when $\lambda=+\infty$. Defining $c$ to be the function in the last limit
now gives \eqref{eq.xasycorrection}. This approach could be used to prove \textit{all} cases in Theorem~\ref{thm.2.2} directly, rather than by appealing to the implicit arguments used in Lemma~\ref{lemma.l1} (i.e., in the second step of the proof of Theorem~\ref{thm.2.2}). The direct argument would then proceed by means of Lemma~\ref{lemma.lemmaa1} and related results. 

Given the asymptotic taxonomy established in Theorem~\ref{thm.2.2}, one might ask whether the condition that $f(x)/(x/\log x)\to 0$ as $x\to\infty$ is \emph{necessary} in order to preserve the asymptotic behaviour of \eqref{eq.equivode}. The next result shows that it is. 
\begin{theorem} \label{thm.2.2.1}
Let $f(x)>0$ for all $x>0$, $f'(x)>0$ for all $x > x_1$, $f'(x) \to 0$ as $x \to \infty$. 
Suppose in addition $f'$ is decreasing on $[x_2,\infty)$. 
Let $\tau>0$, $\mu\in M([-\tau,0];\mathbb{R}^+)$ be a positive finite Borel measure,
with 
\[
M := \int_{[-\tau,0]} \mu(ds), \quad C := \int_{[-\tau,0]} |s|\mu(ds), 
\] 
$F$ is defined by \eqref{def.F}, and $x$ is the unique continuous solution $x$ of  \eqref{eq.fde}.
Then the following are equivalent:
\begin{itemize}
\item[(a)]
\[
\lim_{x\to\infty} \frac{f(x)}{x/\log x} = 0;
\]
\item[(b)]
\begin{equation*} 
\lim_{t \to \infty}\frac{x(t)}{F^{-1}(Mt)} = 1.
\end{equation*}
\end{itemize}
\end{theorem}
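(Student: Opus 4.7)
The implication (a) $\Rightarrow$ (b) is immediate from Theorem~\ref{thm.2.2}: under (a), the constant $\lambda$ in \eqref{eq.lambda} is equal to $0$ (so $f$ obeys \eqref{eq.lambda} trivially), hence \eqref{eq.xasy} becomes $\lim_{t\to\infty}x(t)/F^{-1}(Mt)=e^{-0\cdot C}=1$. No further work is needed for this half.

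For (b) $\Rightarrow$ (a), my plan is to argue directly from the intermediate limit \eqref{eq.keylimit1} rather than by contraposition through Theorem~\ref{thm.2.2} (whose statement presupposes that $\lambda$ exists, something Theorem~\ref{thm.2.2.1} does not). Writing $y(t):=F^{-1}(Mt)$, I rearrange \eqref{eq.keylimit1} as
\[
F(y(t))-F(x(t)) \sim C\log f(x(t)) \quad \text{as }t\to\infty.
\]
Since $C>0$ and $\log f(x(t))\to\infty$, the right-hand side is eventually positive, forcing $x(t)<y(t)$ for large $t$; the monotonicity of $f$ on $[x(t),y(t)]$ gives
\[
F(y(t))-F(x(t)) = \int_{x(t)}^{y(t)}\frac{du}{f(u)} \leq \frac{y(t)-x(t)}{f(x(t))},
\]
which furnishes the lower bound $y(t)-x(t)\geq Cf(x(t))\log f(x(t))(1+o(1))$.

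Hypothesis (b) yields $y(t)-x(t)=o(y(t))$ and $x(t)/y(t)\to 1$, so this bound forces $f(x(t))\log f(x(t))/x(t)\to 0$. Since $x$ is eventually strictly increasing to $+\infty$ (by positivity of $\mu$ and $f$), the range of $x$ is an unbounded interval and the limit transfers to $\lim_{u\to\infty} f(u)\log f(u)/u=0$. Setting $h(u):=f(u)/u$, which tends to $0$ by the sublinearity $f(u)/u\to 0$, the identity $h(u)\log f(u)=h(u)\log u+h(u)\log h(u)$ combined with the elementary fact $h\log h\to 0$ as $h\to 0^{+}$ yields $h(u)\log u\to 0$, which is exactly (a).

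The main obstacle is verifying that \eqref{eq.keylimit1} is actually available under the hypotheses of Theorem~\ref{thm.2.2.1}, in which $\lambda$ need not exist; the extra assumption that $f'$ is decreasing, absent from Theorem~\ref{thm.2.2}, presumably serves as the substitute for the existence of $\lambda$ in obtaining the requisite asymptotic control on the perturbation $\delta(t):=\int\mu(ds)\{f(x(t))-f(x(t+s))\}$ via the concavity of $f$. Once \eqref{eq.keylimit1} is in hand, the rest of the argument above is essentially elementary bookkeeping using only the monotonicity of $f$ and the identity linking $\log f$ and $\log u$.
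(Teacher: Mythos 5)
Your proof is correct, and while the (a)$\Rightarrow$(b) half coincides with the paper's (both simply invoke Theorem~\ref{thm.2.2} with $\lambda=0$), your (b)$\Rightarrow$(a) argument is genuinely different from, and in one respect stronger than, the published one. The paper writes $x(t)=F^{-1}(Mt-a(t))$ with $a(t)=C(t)\log f(x(t))$ and then routes through Lemma~\ref{lemma.lemmaa1}(b) and Lemma~\ref{asym_equiv_copy_ap}, both of which lean on the hypothesis that $f'$ is decreasing (via Lemma~\ref{lemma.fprdown} and the need to show $\log f(x(t))\sim \log f(F^{-1}(Mt))$); you bypass both lemmas with the one-line estimate $Mt-F(x(t))=\int_{x(t)}^{F^{-1}(Mt)}f(u)^{-1}\,du\leq (F^{-1}(Mt)-x(t))/f(x(t))$, which uses only the eventual monotonicity of $f$, and then the same $h\log h\to 0$ identity the paper ends with. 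Your flagged ``main obstacle'' is a non-issue: the derivation of \eqref{eq.keylimit1} in the proof of Theorem~\ref{thm.2.2} never uses \eqref{eq.lambda} (the existence of $\lambda$ enters only in the second step, Lemma~\ref{lemma.l1}), and the paper's own proof of Theorem~\ref{thm.2.2.1} states explicitly that \eqref{eq.keylimit1} holds absent that hypothesis. Your guess that the decreasing-derivative assumption is what secures \eqref{eq.keylimit1} is therefore off the mark --- in the paper it is consumed precisely by the two lemmas you avoid --- with the pleasant consequence that your argument establishes (b)$\Rightarrow$(a) without the monotonicity of $f'$ at all (retaining, as the paper does tacitly, $C>0$ and $f(x)\to\infty$, the bounded case being trivial for both (a) and (b)).
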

The extra hypothesis that $f'$ is monotone is needed to prove that (b) implies (a): the proof that (a) implies (b)
can still be established using the hypotheses of Theorem~\ref{thm.2.2}. 


We now state the result analogous to Theorem~\ref{thm.2.2} for the solution of the Volterra differential equation \eqref{eq.vde}.
\begin{theorem}  \label{thm.2.3}
Let $f(x)>0$ for all $x>0$, $f'(x)>0$ for all $x > x_1$, $f'(x) \to 0$ as $x \to \infty$. 
Suppose $f$ obeys \eqref{eq.lambda}, $\mu\in M([0,\infty);\mathbb{R}^+)$ is a positive finite Borel measure,
with 
\[
M := \int_{[0,\infty)} \mu(ds), \quad C := \int_{[0,\infty)} s\mu(ds),
\] 
$F$ is defined by \eqref{def.F}, and $x$ is the unique continuous solution $x$ of \eqref{eq.vde}.
\begin{itemize}
\item[(a)] $x$ obeys
\[
\lim_{t\to\infty} x(t)=+\infty, \quad
\lim_{t\to\infty} \frac{F(x(t))}{t}=M.
\]
\item[(b)] If $C<+\infty$, then  
\begin{equation} \label{eq.xasyv}
\lim_{t \to \infty}\frac{x(t)}{F^{-1}(Mt)} = e^{-\lambda C}.
\end{equation}
\item[(c)] If $C=+\infty$ and $\lambda\in (0,\infty]$ then  \eqref{eq.xasyv} still prevails.
\end{itemize}
\end{theorem}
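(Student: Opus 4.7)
The strategy is the two-step programme already laid out in the discussion following Theorem~\ref{thm.2.2}: first establish the key asymptotic
\[
\lim_{t\to\infty} \frac{F(x(t)) - Mt}{\log f(x(t))} = -C,
\]
(interpreted as a divergence to $-\infty$ when $C=+\infty$), and then invoke Lemma~\ref{lemma.l1} to convert this implicit information into the explicit limit in \eqref{eq.xasyv}, reading $e^{-\lambda C}=0$ whenever the product $\lambda C$ is $+\infty$. Part (a) is the Volterra counterpart of the corresponding statement for \eqref{eq.fde} and should be proved first, using monotonicity of $x$ (from positivity of $\mu$ and $f$), non-explosion from $f'(x)\to 0$, and integration of $x'(t)/f(x(t))$ against $[0,t]$ in the spirit of \cite{applebypatterson2016growth}.

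For the perturbation step, rewrite \eqref{eq.vde} as
\[
x'(t) = Mf(x(t)) - \delta(t), \qquad \delta(t) := \int_{[0,t]} \mu(ds)\bigl\{f(x(t))-f(x(t-s))\bigr\} + f(x(t))\,\mu((t,\infty)),
\]
and divide by $f(x(t))$ so that $F(x(t)) - Mt = F(x(0)) - \int_0^t \delta(s)/f(x(s))\,ds$. The first term in $\delta$ is the analogue of the path-dependent remainder that arises in the FDE case; the second is a genuinely new tail term, reflecting that $\mu$ need not be compactly supported. The goal is to establish $\int_0^t \delta(s)/f(x(s))\,ds \sim C\log f(x(t))$ as $t\to\infty$.

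For part (b), where $C<\infty$, I would follow the FDE argument closely. Using a mean-value estimate $f(x(t))-f(x(t-s))\le f'(x(t-s))\,(x(t)-x(t-s))$ together with $x'(u)\sim Mf(x(u))$, one obtains $\delta(t)/f(x(t)) \sim MCf'(x(t))$ for the bulk contribution from small $s$; the tail term $f(x(t))\mu((t,\infty))$ is of order $f(x(t))\cdot o(1/t)$ by Markov's inequality and is absorbed. Integrating $MCf'(x(t))$ against $dt$, and using the change of variables $dt \sim dx/(Mf(x))$, gives $C\log f(x(t))$, which is exactly \eqref{eq.keylimit1}. Lemma~\ref{lemma.l1} then yields \eqref{eq.xasyv}.

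The main obstacle is part (c), where $C=+\infty$ so the measure's tail is non-integrable against $s$. Here both the path-integral piece and the tail piece in $\delta(t)$ may contribute growing amounts, and one cannot rely on Markov-type estimates to make the tail small. The saving grace is $\lambda\in (0,\infty]$, which forces $f(x)\ge c\,x/\log x$ for large $x$ and hence $\log f(x(t))\sim \log x(t)\to\infty$. The plan is to show that for every $K>0$ there is a $t_0$ such that $\int_0^t \delta(s)/f(x(s))\,ds \ge K\log f(x(t))$ for $t\ge t_0$, by splitting the inner integration at a cut-off $s_0$ chosen so that $\int_{[0,s_0]}s\,\mu(ds)\ge K$ and then letting $s_0\to\infty$. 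This produces the divergent version of \eqref{eq.keylimit1}, and Lemma~\ref{lemma.l1} closes the argument. I expect the careful handling of the tail $f(x(t))\mu((t,\infty))$ (controlling it against $\log f(x(t))$ uniformly in the cut-off) to be the most delicate technical point.
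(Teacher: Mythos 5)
Your overall plan coincides with the paper's: the same decomposition of \eqref{eq.vde} into $x'(t)=Mf(x(t))-\delta_1(t)-\delta_2(t)$ with a tail term $\delta_1(t)=f(x(t))\int_{(t,\infty)}\mu(ds)$ and a path term $\delta_2$, the same target relation \eqref{eq.keylimit1} (divergent when $C=+\infty$), the same cut-off device $\int_{[0,T_2(N)]}s\,\mu(ds)>N$ in part (c), and the same final appeal to an implicit-to-explicit lemma. Two steps, however, would fail as literally written. First, the pointwise asymptotic $\delta(t)/f(x(t))\sim MCf'(x(t))$ is not available, and your route to it via $f(x(t))-f(x(t-s))\le f'(x(t-s))\,(x(t)-x(t-s))$ presupposes concavity of $f$, which is not a hypothesis of Theorem~\ref{thm.2.3}. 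The weight $M(t-u,t)=\mu([t-u,t])$ spreads mass over all of $[0,t]$, where $f'(x(u))$ may be much larger than $f'(x(t))$; indeed the upper bound in the paper's sandwich \eqref{eq.2.5} is of order $\int_0^t f'(x(u))\,du$, which grows like $\log f(x(t))$, not like $f'(x(t))$. The paper therefore works with the exact identity $\delta_2(t)=\int_0^t M(t-u,t)f'(x(u))x'(u)\,du$, and sandwiches the \emph{integral} $J(t)$ of $M\tilde{I}_2$ between two quantities both asymptotic to $C\log f(x(t))$, via Fubini and the estimate $\int_u^t M(s-u,s)\,ds\le\int_{[0,t]}w\,\mu(dw)$; the integrated statement \eqref{eq.2.18} is all that is needed.

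Second, Lemma~\ref{lemma.l1} is stated only for $C\in[0,\infty)$ and cannot ``close the argument'' in part (c); the divergent case requires the separate Lemma~\ref{lemma.l8}, whose proof is where the hypothesis $\lambda\in(0,\infty]$ enters a second time. Your worry about delicate handling of the tail $f(x(t))\mu((t,\infty))$ in part (c) is also misplaced: after dividing by $f(x(t))$ and integrating, this term contributes $-\int_0^t\epsilon_1(s)\,ds\le 0$ to $F(x(t))-Mt$, so it pushes in the favourable direction and is simply discarded. The genuinely delicate point in the infinite-moment case lies elsewhere: the contribution of the initial segment $[0,T_1]$ can no longer be controlled by the (now infinite) first moment, and the paper instead bounds its integral by a constant multiple of $\int^\infty f^{-2}(u)\,du$, which converges precisely because $\lambda>0$ forces $\log f(x)/\log x\to 1$. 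With these repairs your sketch reproduces the paper's proof.
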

In the case when $C$ is finite, we can prove a result for \eqref{eq.vde} exactly analogous to Theorem~\ref{thm.2.2.1} for \eqref{eq.fde}, namely that $x(t)/F^{-1}(Mt)\to 1$ if and only if $f(x)\log x/x\to 0$ 
as $x\to\infty$, under the additional assumption that $f'(x)$ tends to zero monotonically. Moreover, we also have a result 
for \eqref{eq.vde} which is an exact analogue of  Theorem~\ref{thm.2.2.1.1} for \eqref{eq.fde}, again assuming $f'(x)$ tends to zero monotonically.

In the functional differential equation \eqref{eq.fde}, $C$ is always finite. However, if $\mu$ is a non--negative 
nontrivial finite measure in $M([0,\infty);\mathbb{R}^+)$, the first moment $C$ can be infinite. In this situation, if $\lambda\in (0,\infty)$, it can now happen that 
\[
\lim_{t \to \infty}\frac{x(t)}{F^{-1}(Mt)} = 0,
\] 
which is in contrast to the finite memory case. Of course, if $\lambda=+\infty$, it does not matter whether 
$C$ is finite or not, and we have 
\[
\lim_{t \to \infty}\frac{x(t)}{F^{-1}(Mt)} = 0,
\]
which is the same as we see in the finite memory case.

It can therefore be seen that Theorem~\ref{thm.2.3} addresses all cases except for that when $\lambda=0$, $C=\infty$. 
Again, the different effect that unbounded memory can have on the asymptotic behaviour is demonstrated: for \eqref{eq.fde}, if $\lambda=0$, it must follow that 
\[
\lim_{t\to\infty} \frac{x(t)}{F^{-1}(Mt)}=1.
\]  
However, this is not guaranteed to be the case for solutions of \eqref{eq.vde}. The condition 
\begin{equation} \label{eq.suffcrude}
\limsup_{x\to\infty} \frac{f(x)}{x}\int_1^x \frac{1}{f(u)}\,du <+\infty
\end{equation}
is nevertheless sufficient to ensure the existence of a unit limit in \eqref{eq.xasyv}, and roughly speaking, this condition is true for functions which grow more slowly that $x^{1-\epsilon}$ for some $\epsilon\in (0,1)$ 
(more precisely it is true, if $f\in \text{RV}_\infty(1-\epsilon)$ for some $\epsilon\in (0,1)$ or if $x\mapsto f(x)/x^{1-\epsilon}$ is asymptotic to a decreasing function) \cite{applebypatterson2016growth}. In the case that $f(x)/x\to 0$ as $x\to\infty$, and 
$f$ in $\text{RV}_\infty(1)$, it is true that 
\begin{equation} \label{eq.suffcrudefalse}
\lim_{x\to\infty} \frac{f(x)}{x}\int_1^x \frac{1}{f(u)}\,du =+\infty,
\end{equation}
so the potential arises for a limit less than unity in \eqref{eq.xasyv} even when  
\[
\lim_{x\to\infty} \frac{f(x)}{x/\log x}=0
\] 
and $C=+\infty$.

Our last result shows that different limits can indeed result in the case when $\lambda=0$, $C=\infty$, depending on how slowly $\int_0^t \int_{[s,\infty)} \mu(du) \,ds \to\infty$ as $t\to\infty$. We do not give a classification in all cases, but merely give sufficient conditions for  the limit in \eqref{eq.xasyv} to be zero or unity, and briefly show that some of our sufficient conditions are also sometimes necessary. In order to simplify proofs, we assume here that $f$ is increasing on $[0,\infty)$.

\begin{theorem}  \label{thm.2.4}
Let $f(x)>0$ for all $x>0$, $f'(x)>0$ for all $x > 0$, $f'(x) \to 0$ as $x \to \infty$. Suppose $f$ obeys \eqref{eq.lambda}, and $\mu\in M([0,\infty);\mathbb{R}^+)$ is a positive finite Borel measure, $F$ is given by \eqref{def.F}, $M := \int_{[0,\infty)} \mu(ds)$ and let $x$ be the unique continuous solution $x$ of \eqref{eq.vde}. 
\begin{itemize}
\item[(i)] If
\begin{align} \label{eq.suff11}
\lim_{x\to\infty} \frac{f(x)}{x/\log x}\int_{[0,F(x)/M]} s\mu(ds)&=0, \\
\label{eq.suff21}
\lim_{x\to\infty} \frac{f(x)}{x}\int_{[0,F(x)/M]} \int_{[s,\infty)}  \mu(du)\,ds&=0,
\end{align}
then
\begin{equation} \label{eq.xasyv2}
\lim_{t \to \infty}\frac{x(t)}{F^{-1}(Mt)} = 1.
\end{equation}
\item[(ii)]
If 
\begin{equation} \label{eq.suff10}
\lim_{x\to\infty} \frac{f(x)}{x}\int_{[0,F(x)/M]} \int_{[s,\infty)}  \mu(du)\,ds=+\infty,
\end{equation}
then 
\begin{equation} \label{eq.xasyv3}
\lim_{t \to \infty}\frac{x(t)}{F^{-1}(Mt)} = 0.
\end{equation}
\item[(iii)] If $f'$ is decreasing on $[x_2,\infty)$, then \eqref{eq.xasyv2} implies \eqref{eq.suff21}. 
\end{itemize}
\end{theorem}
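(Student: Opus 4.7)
The plan is to derive an exact identity by dividing \eqref{eq.vde} by $f(x(u))$, integrating on $[0,t]$, and applying Fubini to separate the delay and tail contributions:
\[
Mt+F(\psi)-F(x(t)) \;=\; \int_0^t \mu((u,\infty))\,du \;+\; \int_{[0,t]}\mu(ds)\int_0^{t-s}\left(1-\frac{f(x(v))}{f(x(v+s))}\right)dv \;=:\; A(t)+B(t),
\]
where $A(t),B(t)\ge 0$. Since $(F^{-1})'(w)=f(F^{-1}(w))$, monotonicity of $f$ and $F^{-1}$ yields the two-sided sandwich
\[
f(\min(x(t),F^{-1}(Mt)))\cdot|Mt-F(x(t))| \;\le\; |F^{-1}(Mt)-x(t)| \;\le\; f(\max(x(t),F^{-1}(Mt)))\cdot|Mt-F(x(t))|,
\]
so $x(t)/F^{-1}(Mt)\to 1$ is equivalent to $Mt-F(x(t))=o(x(t)/f(x(t)))$; moreover $A(t)+B(t)\ge 0$ alone already gives $x(t)\le(1+o(1))F^{-1}(Mt)$ for free (using $f(z)/z\to 0$). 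Thus (i) reduces to $A(t)+B(t)=o(F^{-1}(Mt)/f(F^{-1}(Mt)))$, (ii) to $A(t)f(x(t))/x(t)\to\infty$, and (iii) to extracting \eqref{eq.suff21} from $A(t)=o(x(t)/f(x(t)))$.

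For (i), the tail is immediate from \eqref{eq.suff21} evaluated at $x=F^{-1}(Mt)$. For the delay term, I would use $x'\le Mf(x)$ together with $f(x(v+s))\ge f(x(u))$ for $u\in[v,v+s]$ to derive $1-f(x(v))/f(x(v+s))\le M\int_v^{v+s} f'(x(u))\,du$; Fubini then yields
\[
B(t)\;\le\; M\int_{[0,t]} s\,\mu(ds)\cdot\int_0^t f'(x(u))\,du.
\]
The crux is to show $\int_0^t f'(x(u))\,du=O(\log x(t))$: since $\mu$ is finite, pick $\tau_0$ with $\mu([0,\tau_0])\ge M/2$; the subexponential property combined with $x'\le Mf(x)$ gives $f(x(u-\tau_0))/f(x(u))\to 1$ (because $\log f(x(u))-\log f(x(u-\tau_0))\le M\tau_0\sup_{v\ge u-\tau_0}f'(x(v))\to 0$), from which $x'(u)\ge (M/4)f(x(u))$ for all large $u$; changing variables then gives $\int_0^t f'(x(u))\,du\le(4/M)[\log f(x(t))-\log f(x(u_0))]+O(1)=O(\log x(t))$. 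Feeding in \eqref{eq.suff11} at $x=F^{-1}(Mt)$ completes (i).

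For (ii), dropping $B$ gives $Mt-F(x(t))\ge A(t)-F(\psi)$, and combined with the lower half of the sandwich this yields $F^{-1}(Mt)/x(t)-1\ge (f(x(t))/x(t))(A(t)-F(\psi))$. The hypothesis \eqref{eq.suff10} applied at $x=x(t)$ excludes the regime $Mt<F(x(t))$ (there $A(t)\le F(\psi)$, contradicting $A(t)f(x(t))/x(t)\to\infty$), so eventually $F(x(t))/M\le t$ and therefore $A(t)\ge\int_0^{F(x(t))/M}\mu((s,\infty))\,ds$; the right-hand side tends to $+\infty$, giving \eqref{eq.xasyv3}. For (iii), the hypothesis $x(t)\sim F^{-1}(Mt)$ and the sandwich force $Mt-F(x(t))=o(x(t)/f(x(t)))$, whence $A(t)=o(x(t)/f(x(t)))$; I would then use monotonicity of $f'$ and an asymptotic equivalence in the spirit of Lemma~\ref{asym_equiv_copy_ap} (as employed for Theorem~\ref{thm.2.2.1.1}) to transfer this estimate from $x=x(t)$ to $x=F^{-1}(Mt)$ and realign $t$ with $F(x(t))/M$, using the monotonicity of $\mu((\cdot,\infty))$ to control the remaining discrepancy, which yields \eqref{eq.suff21}.

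The principal obstacle is the bound $\int_0^t f'(x(u))\,du=O(\log x(t))$ in (i), which without monotonicity of $f'$ requires extracting a uniform lower bound $x'\ge cf(x)$ from the finite measure $\mu$ together with subexponential growth of $x$; a secondary, recurring subtlety is reconciling the time variable $t$ appearing in the identity with the variable $F(x)/M$ appearing in the hypotheses \eqref{eq.suff11}--\eqref{eq.suff10}, which in part (iii) is exactly where the $f'$-decreasing assumption is used.
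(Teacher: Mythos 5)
Your proposal is correct, and for parts (i) and (iii) it is essentially the paper's argument in different clothing: the identity $Mt+F(\psi)-F(x(t))=A(t)+B(t)$ is exactly the paper's decomposition into $\int_0^t\epsilon_1(s)\,ds$ and $\int_0^t M\tilde{I}_2(s)\,ds$; your Fubini bound $B(t)\leq M\int_{[0,t]}s\,\mu(ds)\int_0^t f'(x(u))\,du$ is the paper's estimate via $M_1(u,t)\leq\int_{[0,t]}v\,\mu(dv)$; your derivation of $x'\geq (M/4)f(x)$ from a $\tau_0$ with $\mu([0,\tau_0])\geq M/2$ and $f(x(u-\tau_0))/f(x(u))\to1$ is the same mechanism by which the paper obtains $x'>M(1-\epsilon)f(x)$ (the constant is irrelevant since only $\int_0^t f'(x(u))\,du=O(\log x(t))$ is needed against \eqref{eq.suff11}); and your two-sided mean-value sandwich for $F^{-1}$ is precisely the content of Lemma~\ref{lemma.lemmaa1}, whose part (b) (where $f'$ decreasing enters) you correctly identify as the tool for (iii). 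The genuine divergence is in part (ii): the paper routes the conclusion through Lemma~\ref{lemma.la2}, which requires the change of variables $\kappa(t)=t-K(t)$, its inverse, and the monotonicity of $\epsilon_1$ to align the hypothesis (stated at $F(x)/M$) with real time $t$; you instead evaluate \eqref{eq.suff10} along $x=x(t)$, note that \eqref{eq.suff10} forces $\int_0^\infty\mu([s,\infty))\,ds=+\infty$ and hence $Mt\geq F(x(t))$ eventually, and read off $F^{-1}(Mt)/x(t)\geq 1+\tfrac{f(x(t))}{x(t)}(A(t)-F(\psi))\to\infty$ from the lower half of the sandwich. This is a valid and genuinely shorter argument that avoids Lemma~\ref{lemma.la2} entirely. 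One further observation on (iii): since $t\mapsto x(t)$ is continuous, increasing and onto $[\psi,\infty)$, your estimate $\tfrac{f(x(t))}{x(t)}A(t)\to0$ combined with $F(x(t))/M\leq t+F(\psi)/M$ already yields \eqref{eq.suff21} along \emph{all} large $x$ without any transfer from $x(t)$ to $F^{-1}(Mt)$, so the monotonicity of $f'$ is not actually needed on your route; invoking it as you propose is harmless but, unlike in the paper's Lemma~\ref{lemma.lemmaa1}(b) argument, not essential.
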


We note that the condition \eqref{eq.suff21} is a consequence of the condition \eqref{eq.suffcrude}, and if  
\eqref{eq.suff10} holds, then \eqref{eq.suffcrude} cannot: indeed \eqref{eq.suff10} implies \eqref{eq.suffcrudefalse}.

We give some examples in the next section which illuminate the sufficient conditions 
\eqref{eq.suff11}, \eqref{eq.suff21}, \eqref{eq.suff10} under which we obtain unit or zero limits. 
However, it can be seen that if the rate of growth of 
\[
t\mapsto \int_{[0,t]} \int_{[s,\infty)}  \mu(du)\,ds=:T(t)
\] 
to infinity as $t\to\infty$ is faster, it is more likely that the solution of \eqref{eq.vde} will grow strictly more 
slowly than that of \eqref{eq.equivode}, and the slower that $T$ grows, and the faster that 
\[
x\mapsto \frac{f(x)}{x/\log x}
\]
tends to zero as $x\to\infty$, the more likely it is that the solution of \eqref{eq.vde} will inherit exactly the 
rate of growth of the solution of \eqref{eq.equivode}. 

We do not attempt to improve the sufficient conditions in Theorem~\ref{thm.2.4} here. 
As the discussion above suggests, when $f$ grows more slowly than a function in $\text{RV}_\infty(1)$, a unit limit in 
\eqref{eq.xasyv} is usually admitted. However, when $f$ is in $\text{RV}_\infty(1)$ with $\lambda=0$, it is interesting to speculate how close 
\eqref{eq.suff11} is to being necessary in order to obtain a unit limit in \eqref{eq.xasyv} (part (iii) confirms that \eqref{eq.suff21} is necessary if $f$ is ultimately concave). 
\begin{theorem} \label{thm.2.5}
Let $f'(x)>0$ for all $x>0$ and $f'(x)\to 0$ as $x\to\infty$ with $f'$ decreasing. Suppose that $f\in \text{RV}_\infty(1)$ such that 
\[
\lim_{x\to\infty} \frac{xf'(x)}{f(x)}=1.
\] 
Let $\mu\in M([0,\infty);\mathbb{R}^+)$ be a positive finite Borel measure, $F$ is given by \eqref{def.F}, $M := \int_{[0,\infty)} \mu(ds)$ and let $x$ be the unique continuous solution $x$ of \eqref{eq.vde}.
Define 
\begin{equation} \label{def.K}
K(x) = \int_1^x \left\{\frac{f(v)}{v}\int_{[F(x)/M-F(v)/M,F(x)/M]} \mu(ds)\right\}  \,dv.
\end{equation}
If $x$ obeys \eqref{eq.xasyv2}, then \eqref{eq.suff21} and  
\begin{equation} \label{eq.suff31}
\lim_{x\to\infty} \frac{f(x)}{x} \int_{1}^x  K(u) \frac{1}{f^2(u)}\,du = 0,
\end{equation}
hold. 
\end{theorem}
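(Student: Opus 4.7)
The plan is to view \eqref{eq.vde} as the perturbation $x'(s) = Mf(x(s)) - \delta(s)$ with $\delta(s) := \int_{[0,s]}\mu(du)\{f(x(s))-f(x(s-u))\}$, integrate the identity $x'(s)/f(x(s)) = M - \delta(s)/f(x(s))$ to obtain
\[
Mt - F(x(t)) = -F(\psi) + \int_0^t \frac{\delta(s)}{f(x(s))}\,ds,
\]
and then translate the hypothesis $x(t)/F^{-1}(Mt)\to 1$ via Taylor's theorem for $F$ (using $F'=1/f$ together with $f\in \text{RV}_\infty(1)$) into the quantitative deficit bound
\[
Mt - F(x(t)) \sim \frac{F^{-1}(Mt) - x(t)}{f(F^{-1}(Mt))} = o\!\left(\frac{F^{-1}(Mt)}{f(F^{-1}(Mt))}\right).
\]
The strategy is to show that the right-hand side of the first display is asymptotic to $M^{-1}\int_1^{F^{-1}(Mt)} K(u)/f(u)^2\,du$; multiplying the second display by $f(F^{-1}(Mt))/F^{-1}(Mt)$ and writing $x = F^{-1}(Mt)$ then produces exactly \eqref{eq.suff31}. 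The companion condition \eqref{eq.suff21} is supplied directly by Theorem~\ref{thm.2.4}(iii), since $f'$ is decreasing.

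To estimate $\delta(t)$, I would write $f(x(t))-f(x(t-s)) = \int_{t-s}^t f'(x(r))x'(r)\,dr$ and interchange the order of integration to get
\[
\delta(t) = \int_0^t f'(x(r))\, x'(r)\, \mu([t-r,t])\,dr.
\]
For each fixed $v$, the equivalence $x \sim F^{-1}(M\cdot)$ and the regular variation of $f$ give $f(x(r-v))/f(x(r))\to 1$, so dominated convergence (integrand bounded by $1$) yields $x'(r)\sim Mf(x(r))$; combined with $xf'(x)/f(x)\to 1$ this produces $f'(x(r))x'(r)\sim M f(F^{-1}(Mr))^2/F^{-1}(Mr)$. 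Substituting into the Fubini display and making the change of variable $v = F^{-1}(Mr)$, $dv = Mf(v)\,dr$, identifies the result with $K(F^{-1}(Mt))$, giving $\delta(t)\sim K(F^{-1}(Mt))$. A further change of variable $u = F^{-1}(Ms)$, $du = Mf(u)\,ds$, in $\int_0^t \delta(s)/f(x(s))\,ds$ then produces the required asymptotic.

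The main obstacle is upgrading the pointwise equivalence for $f'(x(r))x'(r)$ to one which is effective inside the Fubini integral: because $C$ may be infinite and $\mu$ may place mass on arbitrarily long delays, the contribution from small $r$ (where $x(r-v)$ can be far from $x(r)$) must be controlled separately. The standard remedy is to split $[0,t]$ at some large $T$ and bound the $[0,T]$ contribution by $\mu([0,t])\le M$ times a uniform bound on the integrand, showing it is negligible compared to the principal term on $[T,t]$. For the conclusion \eqref{eq.suff31} only the lower bound $\delta(t)\gtrsim K(F^{-1}(Mt))$ is strictly needed, which softens the estimates slightly.
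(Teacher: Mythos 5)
Your proposal follows essentially the same route as the paper: the same decomposition of the Volterra equation into a tail term and a genuine delay term, the same Fubini rearrangement $\delta_2(t)=\int_0^t \mu([t-r,t])f'(x(r))x'(r)\,dr$, the same pointwise asymptotics $f'(x(r))x'(r)\sim Mf^2(F^{-1}(Mr))/F^{-1}(Mr)$, the same split-at-$T$ argument to push this equivalence through the convolution-type integral (this is exactly the content of the paper's Lemma~\ref{lemma.asyeqivMints}), and the same change of variables identifying the result with $K$. The only genuinely different ingredient is that you translate $x(t)\sim F^{-1}(Mt)$ into the deficit bound by a direct mean value theorem argument using $f\in\text{RV}_\infty(1)$, where the paper routes through Lemma~\ref{lemma.lemmaa1}(b) via the monotonicity of $f'$; both work, and yours is slightly more direct given the hypotheses of this particular theorem.

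One correction is needed. Your opening identity $x'(s)=Mf(x(s))-\delta(s)$ with $\delta(s)=\int_{[0,s]}\mu(du)\{f(x(s))-f(x(s-u))\}$ forces $\mu((s,\infty))=0$, so it is false whenever $\mu$ has unbounded support --- which is precisely the regime where Theorem~\ref{thm.2.5} is of interest (it targets $C=+\infty$). The correct decomposition is $x'(s)=Mf(x(s))-\epsilon_1(s)f(x(s))-\delta_2(s)$ with $\epsilon_1(s)=\int_{(s,\infty)}\mu(du)$, so the integrated identity carries the extra nonnegative term $\int_0^t\epsilon_1(s)\,ds$, and the asserted asymptotic equivalence of the \emph{total} deficit with $M^{-1}\int_1^{F^{-1}(Mt)}K(u)f^{-2}(u)\,du$ need not hold. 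Your argument survives because, as you observe in the last paragraph, only a one-sided bound is needed: both contributions to the deficit are nonnegative, so each is separately $o\!\left(F^{-1}(Mt)/f(F^{-1}(Mt))\right)$, the $\epsilon_1$ piece gives \eqref{eq.suff21} and the $\delta_2$ piece gives \eqref{eq.suff31}. (The same one-sidedness also disposes of the case $\int_1^\infty K(u)f^{-2}(u)\,du<\infty$, where the claimed two-sided equivalence of integrals would fail but \eqref{eq.suff31} is trivial since $f(x)/x\to0$; the paper handles this by an explicit case split.)
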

We have not made extensive use of the theory of regular variation in this paper, even in Theorem~\ref{thm.2.5}. However, it seems that extracting good asymptotic information along the lines needed to prove a converse of Theorem~\ref{thm.2.5} may make greater requests on this theory. The literature regarding the application of the theory of regular variation to the 
asymptotic behaviour of ordinary and functional differential equations is extensive and growing (see for example the monographs of Mari\'{c} \cite{maric2000regular} and {\v{R}}eh{\'a}k \cite{rehakrv} and recent representative papers such as \cite{chatzarakis2014precise}, \cite{matucci2014asymptotics}, \cite{matucci2015extremal} and \cite{takasi2011precise}).

\section{Examples} \label{sec.HWexamples}
\begin{examples} \label{example.1}
A simple example of a function $f$ which obeys the hypotheses of all theorems is now given. We use it throughout this section to illustrate the scope of our general results. Let 
$g(x) = (x+1)/\log^\theta(2+x)$, for $\theta>0$. Clearly $g(x)>0$ for $x>0$ and 
\[
g'(x) = \frac{1}{\log^\theta(2+x)}\left(1 - \frac{(1+x)\theta}{(2+x)\log(2+x)} \right) > 0,\,\, x > e^\theta-2=:s_1(\theta)>0.
\] 
It is easy to see that $g'(x) \to 0$ as $x \to \infty$. 
Moreover, 
\[
g''(x) = \frac{\theta \log^{-(\theta+2)}(x+2) \{(\theta+1) (x+1)-(x+3) \log(x+2)\}}{(x+2)^2}.
\]
Since $x+3>x+1$, by considering the term in the curly brackets, we have $g''(x)<0$ for all $x>e^{\theta+1}-2=:s(\theta)>s_1(\theta)$.
Now, define $f(x)=g(x+s(\theta))$ for $x\geq 0$. Then by the definition of $g$, we see that $f(x)>0$ for all $x\geq 0$, 
$f'(x)>0$ for all $x>0$ and $f''(x)<0$ for all $x>0$. This function $f$ fulfills the hypotheses of all main results, but notice that 
taking $f=g$ still suffices for all results in which we only require $f'(x)>0$ for $x$ sufficiently large. 

By construction, $\lambda$ in \eqref{eq.lambda} is 0, 1, or $+\infty$ according to whether $\theta$ is greater than, equal to, or less than, unity. Computing $F$ simply involves making a substitution and splitting the resulting integral; doing so yields the formula 
\begin{align*}
F(x) = \frac{1}{1+\theta}\log^{\theta+1}\left(x+ e^{\theta +1}\right) - \frac{1}{1+\theta}\log^{\theta+1}\left( 1+ e^{\theta + 1}\right) + \int_{\log \left( 1+ e^{\theta + 1} \right)}^{\log \left(x+ e^{\theta + 1}\right)} \frac{w^\theta}{e^w-1}dw, \quad x > 1.
\end{align*}
From here it is straightforward to show that 
\[
F(x) \sim \frac{1}{1+\theta}\log^{\theta+1}(x), \,\, F^{-1}(x) \sim \exp\left((\theta+1)^{\frac{1}{\theta+1}}x^{\frac{1}{\theta+1}}\right), \mbox{ as } x \to \infty. 
\]
Using the notation for $M$ and $C$ in Theorem \ref{thm.2.2}, the solution of \eqref{eq.fde} obeys
\[
x(t)\sim 
\left\{
\begin{array}{cc}
o\left(\exp\left((\theta+1)^{\frac{1}{\theta+1}}(Mt)^{\frac{1}{\theta+1}}\right)\right),  &\theta <1, \\
e^{-C} \exp\left((\theta+1)^{\frac{1}{\theta+1}}(Mt)^{\frac{1}{\theta+1}}\right),  &\theta =1, \\
\exp\left((\theta+1)^{\frac{1}{\theta+1}}(Mt)^{\frac{1}{\theta+1}}\right),  &\theta >1, 
\end{array}
\right.
\]
as $t\to\infty$. Naturally, one can obtain the same asymptotic representation for the solution of \eqref{eq.vde} by Theorem \ref{thm.2.3} in the case where 
$C=\int_{[0,\infty)} s\mu(ds)$ is finite.
\end{examples}

\begin{examples} \label{example.2}
In this example, we show, in many cases of interest, that \eqref{eq.suff11} implies \eqref{eq.suff21}.
We can see, roughly, that a claim of this type would follow from information about the relative 
asymptotic behaviour of 
\[
t\mapsto \int_{[0,t]} u\mu(du) \quad \text{  and  } \quad t\mapsto t\int_{[t,\infty)} \mu(du) \text{ as $t\to\infty$}
\] 
because, for any $t\geq 0$, we have 
\begin{equation}  \label{eq.mumomemttail}
\int_0^t \int_{[s,\infty)} \mu(du)\,ds = \int_{[0,t]} u\mu(du) + t\int_{[t,\infty)} \mu(du).
\end{equation}

We specialise to the case when $\mu\in M([0,\infty);\mathbb{R}^+)$ is absolutely continuous and therefore 
we have $\mu(ds)=k(s)\,ds$ where $k$ is continuous, non--negative and integrable. Hence 
for every Borel set $E\subset [0,\infty)$ we have 
\[
\mu(E)=\int_E k(s)\,ds.
\] 
Now suppose further that $k\in \text{RV}_\infty(-\alpha)$. Then integrability forces $\alpha\geq 1$. Also, if 
$\alpha>2$, it follows that 
\[
C= \int_{[0,\infty)} s\mu(ds) = 
\int_{0}^\infty \int_{[t,\infty)} \mu(ds) \,dt <+\infty,
\]
so to be of interest in Theorem~\ref{thm.2.4}, it is necessary for $\alpha\in [1,2]$. 

In the case $\alpha\in (1,2)$, we have by Karamata's theorem (see e.g.~\cite[Theorem 1.5.11]{BGT})
\[
t
\int_{[t,\infty)} \mu(ds) \sim \frac{1}{\alpha-1} t^2k(t), \quad
\int_{[0,t]} s\mu(ds) \sim \frac{1}{2-\alpha} t^2 k(t), \quad \text{ as $t\to\infty$}.
\]
Hence by \eqref{eq.mumomemttail},
\begin{equation}  \label{eq.momenttailasy}
\int_0^t \int_{[s,\infty)} \mu(du) \,ds \sim \left( 1 + \frac{2-\alpha}{\alpha-1} \right) \int_{[0,t]} s\mu(ds), 
\quad \text{ as $t\to\infty$}.
\end{equation}
Therefore, for $\alpha\in (1,2)$, if \eqref{eq.suff11} holds, then so does \eqref{eq.suff21}. Karamata's theorem 
applied to $t\mapsto \int_{[0,t]} s\mu(ds)$ also shows that this implication is true if $\alpha=2$ and $C=+\infty$. 

%
\end{examples}

\begin{examples} \label{example.3}
Let $f$ be as in Example \ref{example.1}. Suppose that $\theta>1$ and note that $f\in \text{RV}_\infty(1)$, so 
\[
\lim_{x\to\infty} \frac{f(x)F(x)}{x}=\infty
\]
 and $\lambda=0$ in \eqref{eq.lambda}. Therefore, in order to check whether $x(t)/F^{-1}(Mt)$ tends to a non--unit limit, it is necessary to appeal to Theorem~\ref{thm.2.4} in the case when $C=+\infty$.
We saw in Example \ref{example.2} that choosing $\mu$ to be absolutely continuous with $\mu(ds)=k(s)\,ds$ and 
$k\in \text{RV}_\infty(-\alpha)$ for $\alpha\in [1,2]$ allows us to consider the case when $C=+\infty$. Therefore, let 
 $k\in \text{RV}_\infty(-\alpha)$ for $\alpha\in [1,2]$.

We now show, using Theorem~\ref{thm.2.4}, that 
\begin{equation} \label{eq.lim1}
\alpha\in \left( 1 + \frac{2}{1+\theta},2\right] \text{ implies } \lim_{t\to\infty} \frac{x(t)}{F^{-1}(Mt)}=1
\end{equation}
while 
\begin{equation} \label{eq.lim2}
\alpha\in \left[ 1, 1+\frac{1}{1+\theta}\right) \text{ implies } \lim_{t\to\infty} \frac{x(t)}{F^{-1}(Mt)}=0
\end{equation}
in the case that $k\in L^1(0,\infty)$.

Therefore, the slower that $f$ grows, the larger is $\theta$, and 
the greater the range of $\alpha$ for which \eqref{eq.lim1} holds: hence, less rapid growth in $f$ makes it easier for the asymptotic behaviour of \eqref{eq.equivode} to be preserved by the solution of \eqref{eq.vde}. On the other hand, as $\theta\downarrow 1$, the range of values of  $\alpha$ for which \eqref{eq.lim1} holds
narrows, and indeed collapses to the singleton $\alpha\in \{2\}$. 

Viewing $\theta$ as fixed, we see that the larger the value of $\alpha$, and the more rapidly the memory of the past fades, the more likely it is that  \eqref{eq.lim1} holds, and the asymptotic behaviour of \eqref{eq.equivode} to be preserved by the solution of \eqref{eq.vde}. Turning to \eqref{eq.lim2}, 
similar considerations connect the relative strength of the nonlinearity and the 
rapidity at which the memory fades, leading to growth in $x$ which is slower than that in the solution of \eqref{eq.equivode}.

We prove the claims \eqref{eq.lim1} and \eqref{eq.lim2}. 
With $F$ defined by \eqref{def.F}, we have 
\begin{equation}  \label{eq.fFasymptoticsex}
F(x)\sim \frac{1}{\theta+1} (\log x)^{1+\theta}, \quad 
\frac{f(x)}{x/\log x} \sim (\log x)^{1-\theta} 
, \quad \text{as $x\to\infty$}.
\end{equation}
By Karamata's theorem,
\begin{equation} \label{eq.intsareRV}
t\mapsto 
\int_{[0,t]} s\mu(ds) \in \text{RV}_\infty(2-\alpha), \quad 
t\mapsto \int_{[t,\infty)} \mu(ds) \in \text{RV}_\infty(1-\alpha).
\end{equation}
Hence by \eqref{eq.intsareRV} and \eqref{eq.fFasymptoticsex}, as $x\to\infty$,
\[
\int_{[0,F(x)/M]} s\mu(ds)\sim \int_{0}^{\frac{1}{M(\theta+1)}\log^{1+\theta} x} sk(s)\,ds
\sim \left(\frac{1}{M(\theta+1)}\right)^{2-\alpha} \int_{0}^{\log^{1+\theta} x} sk(s)\,ds,
\]
so \eqref{eq.suff11} is equivalent to 
\[
\lim_{x\to\infty} (\log x)^{1-\theta}\int_{0}^{\log^{1+\theta} x} sk(s)\,ds=0.
\]
This in turn is equivalent to 
\begin{equation} \label{eq.firstone}
\lim_{t\to\infty} t^{\frac{1-\theta}{1+\theta}}\int_0^t sk(s)\,ds=0.
\end{equation}
Therefore, by the last example and Theorem~\ref{thm.2.4}, for $\alpha\in (1,2)$, \eqref{eq.firstone} 
implies $x(t)/F^{-1}(Mt)\to 1$ as $t\to\infty$. By Karamata's theorem, the function in the limit in \eqref{eq.firstone}
is in $\text{RV}_{\infty}((1-\theta)/(1+\theta)+2-\alpha)$, and the index is negative for the range of $\alpha\in (1,2)$ 
stated in \eqref{eq.lim1}. When $\alpha=2$, \eqref{eq.suff11} is still equivalent to \eqref{eq.firstone}, and the index of 
regular variation is negative because $\theta>1$. Hence we have shown \eqref{eq.lim1}.

We now prove \eqref{eq.lim2}. By \eqref{eq.intsareRV} and \eqref{eq.fFasymptoticsex}, as $x\to\infty$ 
\[
\int_{[F(x)/M,\infty)} \mu(ds) \sim \int_{\frac{1}{M(\theta+1)}\log^{1+\theta} x}^\infty k(s)\,ds
\sim  \left(\frac{1}{M(\theta+1)}\right)^{1-\alpha}\int_{\log^{1+\theta} x}^\infty k(s)\,ds
\]
and 
\[
 \frac{F(x)}{M}\int_{[F(x)/M,\infty)} \mu(du) \sim 
(\log x)^{1+\theta} \left(\frac{1}{M(\theta+1)}\right)^{2-\alpha}\int_{\log^{1+\theta} x}^\infty k(s)\,ds.
\]
Therefore by \eqref{eq.mumomemttail}, \eqref{eq.suff10} is equivalent to 
\begin{align*}
\min\left(\log x \cdot \int_{\log^{1+\theta} x}^\infty k(s)\,ds,
\frac{1}{\log^\theta x}  \int_{0}^{\log^{1+\theta} x} sk(s)\,ds\right) \to +\infty, \quad x\to\infty.
\end{align*}
Hence \eqref{eq.suff10} is equivalent to 
\begin{equation}   \label{eq.zeroconditionex2}
\min\left(
t^{1/(1+\theta)}\cdot \int_{t}^\infty k(s)\,ds,  t^{-\frac{\theta}{1+\theta}} \int_{0}^{t} sk(s)\,ds\right) \to +\infty
\text{ as $t\to\infty$},
\end{equation} 
and this implies $x(t)/F^{-1}(Mt)\to 0$ as $t\to\infty$.
Both functions in the minimum are in $\text{RV}_\infty(1/(1+\theta) - \alpha+1)$. Therefore, if $\alpha$ is in the interval specified in \eqref{eq.lim2}, we have that the index of regular variation is positive, and therefore \eqref{eq.zeroconditionex2} holds. This proves the required asymptotic behaviour in \eqref{eq.lim2}.
\end{examples}
\begin{examples}
We now present a simple application of Theorem \ref{thm.2.2.1.1} again with $f$ as in Example  \ref{example.1}. Since Theorem \ref{thm.2.2.1.1} deals with the case when $\lambda = \infty$ we must have $\theta \in (0,1)$. We have shown already that $f$ obeys both $0 < f'(x) \to 0$ as $x\to\infty$ and $f$  decreasing on $[x_2,\infty)$ for some $x_2>0$. Hence the unique continuous solution, $x$, of \eqref{eq.fde} obeys
\begin{align*}
x(t) \sim F^{-1}\left(Mt - c(t)\log F^{-1}(Mt)\right) \sim 
\exp\left( (\theta+1)^{\frac{1}{1+\theta}}\left[ Mt - c(t) (Mt)^{\frac{1}{1+\theta}} \right]^{\frac{1}{1+\theta}} \right), \mbox{ as }t\to\infty,
\end{align*}
where $\lim_{t\to\infty} c(t) = C(1+\theta)^{1/(1+\theta)}$. It is instructive to rewrite the above expression in the form
\begin{align}\label{example.equiv.rewrite}
x(t) &\sim
\exp\left( (\theta+1)^{\frac{1}{1+\theta}} \left[ (Mt)^{1/(1+\theta)} - \tilde{c}(t) (Mt)^{(1-\theta)/(1+\theta)} \right] \right)\nonumber\\
&= y(t)\exp\left( -(\theta+1)^{\frac{1}{1+\theta}}\tilde{c}(t) (Mt)^{(1-\theta)/(1+\theta)} \right), \mbox{ as }t\to\infty,
\end{align}
where a simple application of the mean value theorem shows that $\tilde{c}(t) \sim C\left\{(\theta+1)\right\}^{-1/(1+\theta)}$ and $y(t)$ is the solution to \eqref{eq.equivode} with unit initial condition. Restating the conclusion of Theorem \ref{thm.2.2.1.1} in the form \eqref{example.equiv.rewrite} shows explicitly that the solution of \eqref{eq.fde} is asymptotic to the solution of \eqref{eq.equivode} times a retarding factor which tends to zero as $t\to\infty$. Notice that the main term in the exponent in the retarding factor is of the order $t^{(1-\theta)/(1+\theta)}$; from Example \ref{example.1}, 
the corresponding growth term in $y$ is of the order $t^{1/(1+\theta)}$. Since $\theta\in (0,1)$ the solution $x$ still grows, at a rate roughly described by $\exp(K t^{\theta/(1+\theta)})$. 
\end{examples}

\section{An Implicit Asymptotic Relation} \label{sec.asylemma}
We state and prove two key lemmata which enable direct asymptotic information to be obtained for solutions of 
\eqref{eq.fde} and \eqref{eq.vde} from the indirect asymptotic relation
\begin{equation} \label{eq.Fxlogf}
\lim_{t\to\infty} \frac{F(x(t))-Mt}{\log f(x(t))}= -C.    
\end{equation}
In the first result, $C$ is finite: in the second, $C=+\infty$. 
\begin{lemma} \label{lemma.l1}
Let $M>0$, $C\in (0,\infty)$. Suppose $x(t)\to\infty$ as $t\to\infty$ is such that \eqref{eq.Fxlogf} holds with $C\in [0,\infty)$ and $f$ is increasing on $[x_1,\infty)$ and obeys \eqref{eq.lambda} with $\lambda\in [0,\infty]$. If $x$ also obeys  
\begin{equation}  \label{eq.elemlimsupl}
\limsup_{t\to\infty} \frac{x(t)}{F^{-1}(Mt)}\leq 1,
\end{equation}
then 
\[
\lim_{t\to\infty} \frac{x(t)}{F^{-1}(Mt)}=e^{-\lambda C}.
\]
\end{lemma}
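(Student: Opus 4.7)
Plan: Set $y(t) := F^{-1}(Mt)$, $a(t) := \log y(t)$, $b(t) := \log x(t)$, so that $F(y(t)) = Mt$ and the conclusion $x(t)/F^{-1}(Mt) \to e^{-\lambda C}$ translates into $a(t) - b(t) \to \lambda C$ (read as $+\infty$ when $\lambda = +\infty$). First I would obtain the a priori inequality $b(t) < a(t)$ for $t$ large by observing that the hypothesis \eqref{eq.Fxlogf} together with $\log f(x(t)) \to \infty$ (a consequence of $x(t) \to \infty$ combined with $f(x) \to \infty$) forces $Mt - F(x(t)) \to +\infty$, so strict monotonicity of $F$ gives $x(t) < y(t)$. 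Then I would estimate the integral representation
\[
Mt - F(x(t)) \;=\; \int_{x(t)}^{y(t)} \frac{du}{f(u)}
\]
using the asymptotic $f(u) \sim \lambda u/\log u$ provided by \eqref{eq.lambda}. The limsup hypothesis \eqref{eq.elemlimsupl} is recorded essentially only as a safety net in borderline situations; the hard work is all on the right-hand side above.

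For the principal case $\lambda \in (0, \infty)$, the two-sided bound $(\lambda \mp \epsilon)\, u/\log u$ on $f(u)$ together with $\int_{x}^{y} (\log u / u)\, du = \tfrac{1}{2}(a^2 - b^2)$ sandwiches the integral between $(a^2 - b^2)/(2(\lambda \pm \epsilon))$. Equating with $C \log f(x(t)) \sim C b(t)$ (the latter from $\log f(x) \sim \log x$, which is immediate from $f(x) \sim \lambda x/\log x$) and letting $\epsilon \to 0$ yields the key asymptotic identity
\[
\frac{a(t)^2 - b(t)^2}{b(t)} \longrightarrow 2 \lambda C.
\]
The cases $\lambda = 0$ and $\lambda = +\infty$ should give the corresponding one-sided versions, namely $a^2 - b^2 \leq 2\epsilon C b(1+o(1))$ for every $\epsilon>0$ and $(a^2 - b^2)/b \to \infty$ respectively, by using the one-sided bounds $f(u) \leq \epsilon u/\log u$ and $f(u) \geq K u /\log u$, combined with the elementary estimate $\log f(x(t)) = O(b(t))$ (and $\sim b(t)$ when $\lambda = +\infty$).

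The final step is to extract $a - b \to \lambda C$ from $(a-b)(a+b) \sim 2\lambda C b$ with $a > b \to \infty$. Using $a + b \geq 2b$ one first obtains the upper bound $a - b \leq \lambda C + o(1)$; this forces $a = b + O(1)$, so $(a+b)/(2b) \to 1$, and the identity $a - b = (a^2 - b^2)/(a + b)$ then delivers the limit. The analogous bootstraps cover the degenerate cases: for $\lambda = 0$, $a - b \leq \epsilon C + o(1)$ for every $\epsilon > 0$ yields $a - b \to 0$, so $r(t) \to 1$; for $\lambda = +\infty$, a brief case-split on whether $a(t)/b(t)$ is bounded shows $a - b \to +\infty$, so $r(t) \to 0$. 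I expect the subtlest step to be exactly this passage from the product $(a-b)(a+b)$ to the individual factor $a - b$: it relies on the a priori ordering $b < a \to \infty$ and then bootstraps via the main asymptotic estimate itself to force $a/b \to 1$, after which the cancellation producing $e^{-\lambda C}$ is clean.
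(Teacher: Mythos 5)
Your proposal is correct, and it takes a genuinely different route from the paper's. The paper argues each of the three cases by contradiction: it extracts sequences $t_n^\epsilon$ along which the limsup or liminf would differ from $e^{-\lambda C}$, introduces comparison points $u_n^\epsilon$, $v_n^\epsilon$ in a fixed ratio to $F^{-1}(Mt_n^\epsilon)$, and shows that the increment of $F$ between two points in ratio $\varphi(\epsilon)$ is asymptotically $\lambda^{-1}\log(1/\varphi(\epsilon))\cdot\log x$, which is incompatible with the inequalities forced by \eqref{eq.Fxlogf}. You instead solve the relation directly: writing $Mt-F(x(t))=\int_{x(t)}^{F^{-1}(Mt)}du/f(u)$ and using the exact primitive $\int (\log u/u)\,du=\tfrac12\log^2u$ converts \eqref{eq.Fxlogf} into $(a-b)(a+b)\sim2\lambda Cb$ with $a=\log F^{-1}(Mt)$, $b=\log x(t)$, and your bootstrap ($a+b\geq2b$ gives $a-b\leq\lambda C+o(1)$, hence $a-b=O(1)$, hence $a+b\sim2b$, hence $a-b\to\lambda C$) is sound, as are the one-sided versions for $\lambda=0$ and $\lambda=+\infty$. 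Your version buys a unified, direct treatment of all three cases and makes transparent where the constant $\lambda C$ comes from; the paper's version avoids any reliance on the a priori ordering $x(t)<F^{-1}(Mt)$, which in your argument is deduced from \eqref{eq.Fxlogf} only when $C>0$ (when $C=0$ you must, as you indicate, fall back on \eqref{eq.elemlimsupl} and note that the bound $a-b\leq o(1)$ is vacuous at times where $a\leq b$). Two caveats you share with the paper: the step $\log f(x)\sim\log x$ when $\lambda=+\infty$ silently uses an upper bound such as $f(x)=O(x)$, which comes from the ambient hypothesis $f'(x)\to0$ in every application rather than from the lemma's stated hypotheses; and the two-sided bounds on $f(u)$ derived from \eqref{eq.lambda} hold only on $[u_0(\epsilon),\infty)$, which is harmless since both endpoints of your integral tend to infinity.
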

\begin{proof}
We consider separately the cases where $\lambda\in(0,\infty)$, $\lambda=0$ and $\lambda=+\infty$.

\textbf{Case I: $\lambda=0$.}
In the case $\lambda=0$, we have 
\[
\limsup_{x\to\infty} \frac{\log f(x)}{\log x}\leq 1.
\]
Therefore by \eqref{eq.Fxlogf}
\[
\limsup_{t\to\infty} \frac{Mt-F(x(t))}{\log x(t)}
=\limsup_{t\to\infty} \frac{Mt-F(x(t))}{\log f(x(t))}\cdot \frac{\log f(x(t))}{\log x(t)}\leq C.
\]
Hence
\begin{equation} \label{eq.L0}
L_0:=
\liminf_{t\to\infty} \frac{F(x(t))-Mt}{\log x(t)}\geq -C.
\end{equation}
Thus, for every $\epsilon>0$, there is $T_3>0$ such that for $t\geq T_3$ we have 
$(F(x(t))-Mt)/\log x(t) >-C-1 = -(C+1)$. Hence with $3\mu^\ast/4:=C+1>0$ we have
\begin{equation} \label{eq.x2}
F(x(t)) +\frac{3}{4}\mu^\ast \log x(t) > Mt, \quad t\geq T_3.
\end{equation}
Recall the  estimate \eqref{eq.elemlimsupl}. Suppose, in contradiction to the conclusion when $\lambda=0$, 
that 
\begin{equation} \label{eq.x1}
\liminf_{t\to\infty} \frac{x(t)}{F^{-1}(Mt)}=\underline{\Lambda}\in [0,1).
\end{equation}
Since $\underline{\Lambda}\in [0,1)$, there is $\epsilon_0>0$ such that 
\[
\underline{\Lambda} + \epsilon < e^{-\epsilon \mu^\ast}, \quad \epsilon<\epsilon_0.
\]
Define $\varphi(\epsilon)=e^{-\epsilon \mu^\ast}$. 
By \eqref{eq.x1}, if $\underline{\Lambda}\in [0,1)$, for all $\epsilon\in (0,\epsilon_0)$ there is a sequence $\tau_n^\epsilon\uparrow \infty$ as $n\to\infty$ such that 
\[
x(\tau_n^\epsilon)<(\underline{\Lambda}+\epsilon) F^{-1}(M\tau_n^\epsilon) < \varphi(\epsilon) F^{-1}(M\tau_n^\epsilon)
=:v_n^\epsilon.
\]
Since $\tau_n^\epsilon\uparrow \infty$, it follows that there is $N_1\in \mathbb{N}$ such that 
$\tau_n^\epsilon>T_4$ for all $n>N_1$. Hence for $n>N_1$ we have from \eqref{eq.x2}
\[
F(x(\tau_n^\epsilon)) +\frac{3}{4}\mu^\ast \log x(\tau_n^\epsilon) > M\tau_n^\epsilon. 
\]
Now $x(\tau_n^\epsilon)<v_n^\epsilon$. Hence for $n>N_1$
\[
M\tau_n^\epsilon < F(x(\tau_n^\epsilon)) +\frac{3}{4}\mu^\ast \log x(\tau_n^\epsilon)  < F(v_n^\epsilon) + \frac{3}{4}\mu^\ast \log v_n^\epsilon.
\]
Since $M\tau_n^\epsilon = F(v_n^\epsilon/\varphi(\epsilon))$, so
\begin{equation} \label{eq.v}
F(v_n^\epsilon/\varphi(\epsilon)) < F(v_n^\epsilon) + \frac{3}{4}\mu^\ast \log v_n^\epsilon, \quad n>N_1.
\end{equation}
We wish to show that \eqref{eq.v} is impossible. If we can show that 
\begin{equation}  \label{eq.f6}
\text{There is $x_3(\epsilon)>0$ such that }
F(x/\varphi(\epsilon))-F(x)-\frac{3}{4}\mu^\ast \log x>0, \quad x>x_3(\epsilon),
\end{equation}
we may take $v_n^\epsilon>x_3(\epsilon)$ (which will be true for all $n>N_2(\epsilon)$), so that for $n>N_3=\max(N_1,N_2)$ we have 
\[
F(v_n^\epsilon/\varphi(\epsilon))-F(v_n^\epsilon)-\frac{3}{4}\mu^\ast \log v_n^\epsilon
>0>
F(v_n^\epsilon/\varphi(\epsilon))-F(v_n^\epsilon)-\frac{3}{4}\mu^\ast \log v_n^\epsilon,
\]
where we used \eqref{eq.f6} to get the first inequality, and \eqref{eq.v} to get the second. This generates the required contradiction. Therefore, it suffices to prove \eqref{eq.f6}.

Since $f(x)=o(x/\log x)$, for every $\epsilon\in (0,\epsilon_0)$ there is an $x_3(\epsilon)>0$ such that 
$f(x)<\epsilon x/\log x$ for $x\geq x_3(\epsilon)$. Thus for $x\geq x_3(\epsilon)$ we get 
\[
\int_x^{x/\varphi(\epsilon)} \frac{1}{f(u)}\,du 
\geq \frac{1}{\epsilon}\int_x^{x/\varphi(\epsilon)} \frac{\log u}{u}\,du 
\geq \frac{\log x}{\epsilon}\int_x^{x/\varphi(\epsilon)} \frac{1}{u}\,du. 
\]
Hence for $x\geq x_3(\epsilon)$, from the fact $\varphi(\epsilon)=e^{-\mu^\ast \epsilon}$, we get that
\[
\frac{1}{\log x}
\int_x^{x/\varphi(\epsilon)} \frac{1}{f(u)}\,du 
\geq \frac{1}{\epsilon}\left(\log(x/\varphi(\epsilon))-\log(x)\right)
=\frac{1}{\epsilon}\log\left(\frac{1}{\varphi(\epsilon)}\right)=\mu^\ast.
\]
Since
\[
F(x/\varphi(\epsilon))-F(x)-\frac{3}{4}\mu^\ast \log x
=\log x\left( \frac{1}{\log x}\int_x^{x/\varphi(\epsilon)} \frac{1}{f(u)}\,du - \frac{3}{4}\mu^\ast \right),
\]
for $x\geq x_3(\epsilon)$ we have 
\[
F(x/\varphi(\epsilon))-F(x)-\frac{3}{4}\mu^\ast \log x
\geq \log x \frac{\mu^\ast}{4}>0.
\]
This is \eqref{eq.f6}. Hence, in contradiction to \eqref{eq.x1} we have
\[
\liminf_{t\to\infty} \frac{x(t)}{F^{-1}(Mt)}\geq 1.
\]
Combining this with \eqref{eq.elemlimsupl} we get 
\[
\lim_{t\to\infty} \frac{x(t)}{F^{-1}(Mt)}=1=e^{-\lambda C},
\]
because $\lambda=0$. We have therefore proven the result in the case $\lambda=0$.

\textbf{Case II: $\lambda\in (0,\infty)$.}
In this case, we have that 
\[
\lim_{x\to\infty} \frac{\log f(x)}{\log x}=1.
\]
Therefore, from \eqref{eq.Fxlogf}, we get
\[
\lim_{t\to\infty} \frac{F(x(t))-Mt}{\log x(t)}=-C,
\]
and so, for every $\epsilon\in (0,1)$ there is a $T_3(\epsilon)>0$ such that
\begin{equation} \label{eq.17}
-C(1+\epsilon)\log x(t)<F(x(t))-Mt < -C(1-\epsilon)\log x(t), \quad t\geq T_3(\epsilon).
\end{equation}
By \eqref{eq.elemlimsupl}, we have $\bar{\Lambda}:=\limsup_{t\to\infty} x(t)/F^{-1}(Mt)\leq 1$.
Suppose that 
\begin{equation} \label{eq.18}
e^{-\lambda C} < \bar{\Lambda} \leq 1.
\end{equation}
Since $\bar{\Lambda}>e^{-\lambda C}$ there is $\epsilon_0<1/2$ such that 
\begin{equation} \label{eq.19}
e^{3C\epsilon \lambda}<\frac{\bar{\Lambda}}{e^{-\lambda C}}, \quad \epsilon<\epsilon_0.
\end{equation}
By \eqref{eq.18}, for every $\epsilon\in (0,\epsilon_0\wedge 1/2)$, there is a sequence $t_n^\epsilon\uparrow\infty$
such that 
\[
x(t_n^\epsilon) > \bar{\Lambda} e^{-\epsilon C\lambda} F^{-1}(Mt_n^\epsilon),
\]
so by \eqref{eq.19}, $x(t_n^\epsilon) >  e^{-C\lambda} e^{2\epsilon C\lambda} F^{-1}(Mt_n^\epsilon)$. Put $\varphi(\epsilon)=e^{2C\lambda\epsilon}$. Since $t_n^\epsilon\uparrow\infty$, it follows that there is $N_1(\epsilon)\in \mathbb{N}$ such that $t_{N_1}^\epsilon>T_3(\epsilon)$. Thus  $t_n^\epsilon>T_3(\epsilon)$ for all $n\geq N_1(\epsilon)$. Define $u_n^\epsilon=e^{-\lambda C}\varphi(\epsilon)F^{-1}(Mt_n^\epsilon)$. Then $x(t_n^\epsilon)>u_n^\epsilon$ and 
$F(e^{\lambda C}u_n^\epsilon/\varphi(\epsilon))=Mt_n^\epsilon$. We see also that $u_n^\epsilon\to\infty$ as $n\to\infty$. 

Next, as $f(x)\sim \lambda x/\log x$ as $x\to\infty$, we can show that  
\[
\lim_{x\to\infty} \frac{1}{x/f(x)}\int_{xe^{\lambda C}/\varphi(\epsilon)}^x \frac{1}{f(u)}\,du
=-\log\left(\frac{e^{\lambda C}}{\varphi(\epsilon)} \right)=-\lambda C+2\epsilon \lambda C.
\] 
Therefore
\[
\lim_{x\to\infty} \left\{\frac{1}{\log x}\int_{xe^{\lambda C}/\varphi(\epsilon)}^x \frac{1}{f(u)}\,du + C(1-\epsilon)\right\}
=C\epsilon.
\]
Thus for every $\eta\in (0,1/2)$ there is $\tilde{x}_3(\eta,\epsilon)>0$ such  that $x>\tilde{x}_3(\eta,\epsilon)$ 
implies 
\[
C(1-\epsilon) + \frac{1}{\log x}\int_{xe^{\lambda C}/\varphi(\epsilon)}^x \frac{1}{f(u)}\,du > C\epsilon(1-\eta).
\]
Put $\eta=1/4$ and let $x_3(\epsilon)=\tilde{x}_3(1/4,\epsilon)$. Then for $x>x_3(\epsilon)$ we have 
\[
C(1-\epsilon) + \frac{1}{\log x}\int_{xe^{\lambda C}/\varphi(\epsilon)}^x \frac{1}{f(u)}\,du > C\epsilon \frac{3}{4}>0.
\] 
Next, as $u_n^\epsilon\to\infty$ as $n\to\infty$, there is $N_2(\epsilon)\in\mathbb{N}$ such that $u_n^\epsilon>x_3(\epsilon)>1$ for all $n\geq N_2(\epsilon)$. Let $N_3(\epsilon)=\max(N_1,N_2)$. Then for $n\geq N_3(\epsilon)$ we have 
\begin{equation} \label{eq.20}
C(1-\epsilon) + \frac{1}{\log u_n^\epsilon}\int_{u_n^\epsilon e^{\lambda C}/\varphi(\epsilon)}^{u_n^\epsilon} \frac{1}{f(u)}\,du >0.
\end{equation}
Since $t_n^\epsilon>T_3(\epsilon)$ for all $n\geq N_3(\epsilon)$, $x(t_n^\epsilon)>e^{-\lambda C}\varphi(\epsilon)F^{-1}(Mt_n^\epsilon)$, and so $x(t_n^\epsilon)>u_n^\epsilon$. By \eqref{eq.17}, as $t_n^\epsilon>T_3(\epsilon)$ and $F$ and $x\mapsto \log (x)$ are increasing, we have
\begin{align*}
0&>F(x(t_n^\epsilon))-Mt_n^\epsilon+C(1-\epsilon)\log x(t_n^\epsilon)\\
&>F(u_n^\epsilon)-M t_n^\epsilon+C(1-\epsilon)\log u_n^\epsilon \\
&=F(u_n^\epsilon)-F(e^{\lambda c}u_n^\epsilon/\varphi(\epsilon)) +C(1-\epsilon)\log u_n^\epsilon \\
&=\int_{u_n^\epsilon e^{\lambda C}/\varphi(\epsilon)}^{u_n^\epsilon} \frac{1}{f(u)}\,du
 +C(1-\epsilon)\log u_n^\epsilon \\
&= \log u_n^\epsilon\left\{ C(1-\epsilon) 
+ \frac{1}{\log u_n^\epsilon}\int_{u_n^\epsilon e^{\lambda C}/\varphi(\epsilon)}^{u_n^\epsilon} \frac{1}{f(u)}\,du\right\} >0,
\end{align*}
where we used \eqref{eq.20} at the last step. This gives the desired contradiction to \eqref{eq.18}. Hence we must have 
\begin{equation} \label{eq.21} 
\limsup_{t\to\infty} \frac{x(t)}{F^{-1}(Mt)}\leq e^{-\lambda C}.
\end{equation}
Next we suppose that 
\begin{equation} \label{eq.22} 
\liminf_{t\to\infty} \frac{x(t)}{F^{-1}(Mt)}=: \underline{\Lambda}< e^{-\lambda C}.
\end{equation}
Recall from \eqref{eq.17} that 
\[
F(x(t))-Mt + C(1+\epsilon)\log x(t)>0, \quad t>T_3(\epsilon).
\]
Let $\varphi_2(\epsilon)=e^{-2\epsilon C \lambda}$. Since $\underline{\Lambda}<e^{-\lambda C}$ and 
$\varphi_2(\epsilon)\to 1$ as $\epsilon\to 0^+$, there is $\epsilon_1<1/2$ such that $\epsilon<\epsilon_1$ implies 
$\underline{\Lambda}+\epsilon<e^{-\lambda C}\varphi_2(\epsilon)$. By \eqref{eq.22}, it follows that there is $\tau_n^\epsilon\uparrow\infty$ such that 
\[
x(\tau_n^\epsilon)<(\underline{\Lambda}+\epsilon)F^{-1}(M\tau_n^\epsilon) <e^{-\lambda C}\varphi_2(\epsilon) F^{-1}(M\tau_n^\epsilon)
\]
Since $\tau_n^\epsilon\to\infty$ as $n\to\infty$, there is an $N_4(\epsilon)\in\mathbb{N}$ such that $\tau_n^\epsilon>T_3(\epsilon)$ for all $n\geq N_4(\epsilon)$. Define $v_n^\epsilon=e^{-\lambda C}\varphi_2(\epsilon) F^{-1}(M\tau_n^\epsilon)$, so $x(\tau_n^\epsilon)>v_n^\epsilon$ and $F(e^{\lambda C} v_n^\epsilon/\varphi_2(\epsilon))=M\tau_n^\epsilon$. Next, 
$v_n^\epsilon\to\infty$ as $n\to\infty$ and we get as before
\[
\lim_{x\to\infty} \frac{1}{x/f(x)}\int_{xe^{\lambda C}/\varphi_2(\epsilon)}^x \frac{1}{f(u)}\,du
=-\lambda C+\log\varphi_2(\epsilon).
\]
Thus, as $f(x)/(x/\log x) \to \lambda$ as $x\to\infty$, and $\log\varphi_2(\epsilon)=-2C\lambda \epsilon$, we get
\[
\lim_{x\to\infty} \left\{\frac{1}{\log x}\int_{xe^{\lambda C}/\varphi_2(\epsilon)}^x \frac{1}{f(u)}\,du
+C(1+\epsilon)\right\}
=-C\epsilon. 
\]
Therefore, for every $\eta\in (0,1/2)$ there exists $\tilde{x}_4(\eta, \epsilon)>0$ such that $x>\tilde{x}_4(\eta,\epsilon)$ implies 
\[
C(1+\epsilon)+ \frac{1}{\log x}\int_{xe^{\lambda C}/\varphi_2(\epsilon)}^x \frac{1}{f(u)}\,du <-C\epsilon + C\epsilon\eta.
\] 
Put $\eta=1/4$, and let $x_4(\epsilon)=\tilde{x}_4(1/4,\epsilon)$. Then for $x>x_4(\epsilon)$
\[
C(1+\epsilon)+ \frac{1}{\log x}\int_{xe^{\lambda C}/\varphi_2(\epsilon)}^x \frac{1}{f(u)}\,du <-\frac{3}{4}C\epsilon<0.
\]
Since $v_n^\epsilon\to\infty$ as $n\to\infty$, there is $N_5(\epsilon)\in\mathbb{N}$ such that $v_n^\epsilon>x_4(\epsilon)>1$ for all $n\geq N_5(\epsilon)$. Let $N_6(\epsilon)=\max(N_4(\epsilon), N_5(\epsilon))$. Then for $n\geq N_6(\epsilon)$ we have
\begin{equation} \label{eq.23}
C(1+\epsilon)+\frac{1}{\log v_n^\epsilon}\int_{v_n^\epsilon e^{\lambda C}/\varphi_2(\epsilon)}^{v_n^\epsilon} \frac{1}{f(u)}\,du<0.
\end{equation}
Since $\tau_n^\epsilon>T_3(\epsilon)$ for all $n\geq N_6(\epsilon)$, $x(\tau_n^\epsilon)>v_n^\epsilon$, and $F$ and 
$x\mapsto \log x$ are increasing, by \eqref{eq.17} we have 
\begin{align*}
0&<F(x(\tau_n^\epsilon))-M\tau_n^\epsilon + C(1+\epsilon)\log x(\tau_n^\epsilon)\\
&<F(v_n^\epsilon)-M\tau_n^\epsilon + C(1+\epsilon)\log v_n^\epsilon\\
&=F(v_n^\epsilon)-F(e^{\lambda C}v_n^\epsilon/\varphi_2(\epsilon))+C(1+\epsilon)\log v_n^\epsilon\\
&=\int_{v_n^\epsilon e^{\lambda C}/\varphi_2(\epsilon)}^{v_n^\epsilon} \frac{1}{f(u)} +C(1+\epsilon)\log v_n^\epsilon\\
&=\log v_n^\epsilon \left\{\frac{1}{\log v_n^\epsilon}
\int_{v_n^\epsilon e^{\lambda C}/\varphi_2(\epsilon)}^{v_n^\epsilon} \frac{1}{f(u)} +C(1+\epsilon)\right\}\\
&<0,
\end{align*}
by \eqref{eq.23}, a contradiction. Hence the supposition \eqref{eq.22} is false. Thus
\[
\liminf_{t\to\infty} \frac{x(t)}{F^{-1}(Mt)}\geq e^{-\lambda C}.
\] 
Combining this and \eqref{eq.21} gives 
\begin{equation} \label{eq.24}
\lim_{t\to\infty} \frac{x(t)}{F^{-1}(Mt)}= e^{-\lambda C},
\end{equation}
as desired. This completes the proof when $\lambda\in (0,\infty)$.

\textbf{Case III: $\lambda=+\infty$.} In this case, we have that $f(x)/x\to 0$ as $x\to\infty$ and 
$f(x)/(x/\log x)\to\infty$ as $x\to\infty$, so therefore 
$\log f(x)/\log x\to 1$ as $x\to\infty$. Hence, from \eqref{eq.Fxlogf}, we get
\[
\lim_{t\to\infty} \frac{F(x(t))-Mt}{\log x(t)}=-C,
\]
and so, for every $\epsilon\in (0,1/2)$ there is a $T_3(\epsilon)>0$ such that \eqref{eq.17} holds, i.e.,
\begin{equation*} 
-C(1+\epsilon)\log x(t)<F(x(t))-Mt < -C(1-\epsilon)\log x(t), \quad t\geq T_3(\epsilon).
\end{equation*}
Recall the estimate \eqref{eq.elemlimsupl}. Suppose, in contradiction to the conclusion when $\lambda=+\infty$, 
that 
\begin{equation} \label{eq.25}
\liminf_{t\to\infty} \frac{x(t)}{F^{-1}(Mt)}=\bar{\Lambda}\in (0,1].
\end{equation}
There is a sequence $t_n^\epsilon\uparrow \infty$ as $n\to\infty$ such that 
\[
x(t_n^\epsilon)>\bar{\Lambda}(1-\epsilon) F^{-1}(Mt_n^\epsilon) > K(\epsilon) F^{-1}(Mt_n^\epsilon)=:u_n^\epsilon,
\]
where $K(\epsilon)\in (0,\bar{\Lambda}(1-\epsilon))\subset(0,1)$. 
Since $t_n^\epsilon\uparrow \infty$, it follows that there is $N_1(\epsilon)\in \mathbb{N}$ such that 
$t_n^\epsilon>T_3(\epsilon)$ for all $n\geq N_1(\epsilon)$. Hence for $n\geq N_1(\epsilon)$ we have
\begin{equation}  \label{eq.281}
F(x(t_n^\epsilon))-Mt_n^\epsilon < -C(1-\epsilon)\log x(t_n^\epsilon).
\end{equation}

Since $K(\epsilon)<1$ and $f$ is increasing, we have 
\[
\frac{1}{\log x}\int_{x}^{x/K(\epsilon)} \frac{1}{f(u)}\,du < (K(\epsilon)^{-1}-1)\frac{x}{f(x)\log x}.
\]
Since $f(x)/(x/\log x)\to\infty$ as $x\to\infty$, letting $x\to\infty$ gives
\begin{equation*} 
\lim_{x\to\infty} \frac{1}{\log x} \int_{x}^{x/K(\epsilon)} \frac{1}{f(u)}\,du =0.
\end{equation*}
Therefore, for every $\eta\in (0,1/2)$, there is $\tilde{x}_5(\eta,\epsilon)$ such that $x>\tilde{x}_5(\eta,\epsilon)$ 
implies 
\[
\frac{1}{\log x} \int_{x}^{x/K(\epsilon)} \frac{1}{f(u)}\,du < C\eta.
\]
Pick $\eta=\epsilon$, and set $x_5(\epsilon)=\tilde{x}_5(\epsilon,\epsilon)$. Then for $x\geq x_5(\epsilon)$ we have 
\[
\frac{1}{\log x} \int_{x}^{x/K(\epsilon)} \frac{1}{f(u)}\,du < C\epsilon.
\]
Since $u_n\to\infty$ as $n\to\infty$, there is $N_2(\epsilon)\in\mathbb{N}$ such that for $n\geq N_2(\epsilon)$ 
we have $u_n^\epsilon>x_5(\epsilon)$. Hence
\begin{equation} \label{eq.282}
\frac{1}{\log u_n^\epsilon} \int_{u_n^\epsilon}^{u_n^\epsilon/K(\epsilon)} \frac{1}{f(u)}\,du < C\epsilon, \quad n\geq N_2(\epsilon).
\end{equation}
Finally, let $N_3(\epsilon)=\max(N_1(\epsilon),N_2(\epsilon))$. 
Since $u_n^\epsilon<x(t_n^\epsilon)$, we have $F(u_n^\epsilon)<F(x(t_n^\epsilon))$ and $\log u_n^\epsilon<\log x(t_n^\epsilon)$. Therefore by \eqref{eq.281} and \eqref{eq.282}
\begin{align*}
0&>F(x(t_n^\epsilon)) +C(1-\epsilon)\log x(t_n^\epsilon) -Mt_n^\epsilon\\
&> F(u_n^\epsilon) + C(1- \epsilon)\log u_n^\epsilon-Mt_n^\epsilon\\
&= F(u_n^\epsilon) + C(1- \epsilon)\log u_n^\epsilon-F(u_n^\epsilon /K(\epsilon))\\
&= C(1- \epsilon)\log u_n^\epsilon 
- \int_{u_n^\epsilon}^{u_n^\epsilon/K(\epsilon)} \frac{1}{f(u)}\,du\\
&=\log u_n^\epsilon \left\{ C(1-\epsilon) - \frac{1}{\log u_n^\epsilon} \int_{u_n^\epsilon}^{u_n^\epsilon/K(\epsilon)} \frac{1}{f(u)}\,du\right\}\\
&>\log u_n^\epsilon ( C(1-\epsilon) - C\epsilon)\\
&=\log u_n^\epsilon C(1-2\epsilon)>0,
\end{align*}
 a contradiction. Hence the supposition \eqref{eq.25} is false, and we have
$x(t)/F^{-1}(Mt)\to 0$ as $t\to\infty$ as claimed. 
\end{proof}
For the Volterra equation \eqref{eq.vde}, we will need a new variant of Lemma~\ref{lemma.l1} to cover the case when 
\[
\int_{[0,\infty)} s\mu(ds)=+\infty.
\] 
\begin{lemma} \label{lemma.l8}
Let $M>0$. Suppose $x(t)\to\infty$ as $t\to\infty$ is such that 
\begin{equation} \label{eq.minusinftyCimplicit}
\lim_{t\to\infty} \frac{F(x(t))-Mt}{\log x(t)} =-\infty.
\end{equation}
Suppose also $f$ is increasing and obeys \eqref{eq.lambda} with $\lambda\in (0,\infty]$ and $f'(x)\to 0$ 
as $x\to\infty$. If $x$ also obeys  
  \eqref{eq.elemlimsupl} then 
\[
\lim_{t\to\infty} \frac{x(t)}{F^{-1}(Mt)}=0.
\]
\end{lemma}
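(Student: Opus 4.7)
The plan is to adapt the argument used for Case~III of Lemma~\ref{lemma.l1}, exploiting the fact that hypothesis \eqref{eq.minusinftyCimplicit} here is stronger than the corresponding bound there (the ratio tends to $-\infty$ rather than to a finite limit $-C$). This extra strength will allow a single contradiction argument to handle both $\lambda \in (0,\infty)$ and $\lambda = +\infty$ in one sweep.

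I would argue by contradiction. If the conclusion fails then, in view of \eqref{eq.elemlimsupl},
\[
\bar{\Lambda} := \limsup_{t\to\infty} \frac{x(t)}{F^{-1}(Mt)} \in (0,1].
\]
Fix $K \in (0,\bar{\Lambda})$; extracting an appropriate subsequence yields $t_n \uparrow \infty$ with $x(t_n) > u_n := K F^{-1}(Mt_n)$, so $u_n \to \infty$ and $F(u_n/K) = Mt_n$.

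Given any $A > 0$, hypothesis \eqref{eq.minusinftyCimplicit} provides $T(A)$ such that $F(x(t)) - Mt < -A\log x(t)$ for $t \geq T(A)$. Evaluating at $t_n$ for $n$ large and using $x(t_n) > u_n$ together with monotonicity of $F$ and $\log$ gives
\[
\int_{u_n}^{u_n/K} \frac{du}{f(u)} = F(u_n/K) - F(u_n) > A \log u_n.
\]
Since $f$ is increasing, the integral is bounded above by $u_n(1-K)/(K f(u_n))$, so
\[
\frac{AK}{1-K} < \frac{u_n}{f(u_n) \log u_n}.
\]
Passing to the limit $n \to \infty$ and invoking \eqref{eq.lambda} (with the convention $1/(+\infty) = 0$), the right-hand side tends to $1/\lambda \in [0,\infty)$. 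Hence $AK/(1-K) \leq 1/\lambda$. As $A > 0$ is arbitrary while $K/(1-K)$ is a fixed positive constant, choosing $A$ sufficiently large yields the desired contradiction; thus $\bar{\Lambda} = 0$, which together with positivity of $x$ gives the claim.

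The main obstacle I expect is essentially bookkeeping of the quantifiers: $\bar{\Lambda}$ is fixed by the contradiction hypothesis, $K$ is chosen depending on $\bar{\Lambda}$, the sequence $(t_n)$ is then chosen depending on $K$, and $A$ is selected last so as to force $AK/(1-K) > 1/\lambda$. No new analytic ingredient beyond those already present in Case~III of Lemma~\ref{lemma.l1} is needed; in particular, the crude monotonicity estimate $\int_{u_n}^{u_n/K} du/f(u) \leq u_n(1-K)/(K f(u_n))$ replaces the more delicate $\eta$-choice device used there, because we no longer need to track a sharp constant on the right-hand side.
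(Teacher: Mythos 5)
Your proposal is correct and follows essentially the same route as the paper: a contradiction argument starting from $\bar{\Lambda}=\limsup x(t)/F^{-1}(Mt)\in(0,1]$, extraction of a sequence $t_n$ with $x(t_n)>u_n=KF^{-1}(Mt_n)$, and a comparison of $F(u_n/K)-F(u_n)$ against a large multiple of $\log u_n$ coming from \eqref{eq.minusinftyCimplicit}. The only difference is cosmetic but welcome: you keep $K$ fixed and let the constant $A$ from the hypothesis be the free large parameter, bounding the integral crudely by $u_n(1-K)/(Kf(u_n))\sim (1-K)\log u_n/(K\lambda)$, which unifies the cases $\lambda\in(0,\infty)$ and $\lambda=+\infty$ and avoids the paper's $\epsilon$-dependent choice $K(\epsilon)$.
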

\begin{proof}
From \eqref{eq.minusinftyCimplicit}, we are free to prepare the estimate 
\begin{align} \label{eq.3.2.6pr}
\text{For every $\epsilon\in (0,1)$ there is $T_3(\epsilon)>0$ such that } 
F(x(t))+\frac{2}{\epsilon}\log x(t) - Mt<0, \quad t\geq T_3(\epsilon)
\end{align}
for later use. We now proceed to derive the result that $x(t)/F^{-1}(Mt)\to 0$ as $t\to\infty$ by 
emulating the proof of Lemma~\ref{lemma.l1}.
Suppose not. Then, in view of \eqref{eq.elemlimsupl}, we have 
\begin{equation} \label{eq.3.2.6}
\limsup_{t\to\infty} \frac{x(t)}{F^{-1}(Mt)}=:\bar{\Lambda}\in (0,1].
\end{equation}
Then there is a sequence $t_n\uparrow \infty$ as $n\to\infty$ such that 
$x(t_n^\epsilon) >\bar{\Lambda}(1-\epsilon) F^{-1}(Mt_n^\epsilon)>K(\epsilon)F^{-1}(Mt_n^\epsilon)$ where 
$K(\epsilon)\in (0,\bar{\Lambda}(1-\epsilon))\subset (0,1)$. Since $t_n^\epsilon\uparrow \infty$ as $n\to\infty$, there is $N_1(\epsilon)\in \mathbb{N}$ such that $t_n^\epsilon>T_3(\epsilon)$ for all $n\geq N_1(\epsilon)$. Define $u_n^\epsilon=K(\epsilon)F^{-1}(Mt_n^\epsilon)$. Then $x(t_n^\epsilon)>u_n^\epsilon$ and 
$F(u_n^\epsilon/K(\epsilon))=Mt_n^\epsilon$. Moreover $u_n^\epsilon\to\infty$ as $n\to\infty$. If $\lambda=+\infty$, take $K(\epsilon)=\bar{\Lambda}(1-\epsilon)/2$, while if $\lambda\in (0,\infty)$, take 
$K(\epsilon)=e^{-\lambda(1/\epsilon-1)}$. There is $\epsilon_0\in (0,1)$ such that 
$e^{-\lambda(1/\epsilon-1)}<\bar{\Lambda}(1-\epsilon)$ for all $\epsilon<\epsilon_0\wedge 1$.     

In the case that $\lambda\in (0,\infty)$, it is a direct calculation to show that  
\begin{equation} \label{eq.3.2.7pr}
\lim_{x\to\infty} \frac{1}{\log x} \int_{x}^{x/K(\epsilon)} \frac{1}{f(u)}\,du 
= \frac{1}{\lambda}\log\left(\frac{1}{K(\epsilon)}\right).
\end{equation}
If $\lambda=+\infty$, since $f$ is increasing on $[x_1,\infty)$, for $x>x_1$ we have
\[
0<\frac{1}{\log x}\int_{x}^{x/K(\epsilon)} \frac{1}{f(u)}\,du \leq \left(\frac{1}{K(\epsilon)}-1\right)
\frac{x/\log x}{f(x)},
\]
so as $(x/\log x)/f(x)\to 0$ as $x\to\infty$, we get 
\begin{equation*} 
\lim_{x\to\infty} \frac{1}{\log x} \int_{x}^{x/K(\epsilon)} \frac{1}{f(u)}\,du = 0. 
\end{equation*}
Hence combining this estimate with \eqref{eq.3.2.7pr} we get
\begin{equation} \label{eq.3.2.7}
\lim_{x\to\infty} \frac{1}{\log x} \int_{x}^{x/K(\epsilon)} \frac{1}{f(u)}\,du 
=\left\{
\begin{array}{cc}
-\frac{1}{\lambda} \log K(\epsilon), & \lambda \in (0,\infty), \\ 
0, & \lambda = +\infty
\end{array}
\right.
\end{equation}
We seek to obtain a consolidated estimate covering these cases. Let $\epsilon\in (0,\epsilon_0\wedge 1)$. When $\lambda=+\infty$, it is clear there is $x_3(\epsilon)>1$ such that 
\[
\int_{x}^{x/K(\epsilon)} \frac{1}{f(u)}\,du < \log x < \frac{2}{\epsilon}\log x, \quad x\geq x_3(\epsilon).
\]
For $\lambda\in (0,\infty)$, there is $x_3(\epsilon)>1$ such that for $x\geq x_3(\epsilon)$ we have 
\[
\int_{x}^{x/K(\epsilon)} \frac{1}{f(u)}\,du < \left(1-\frac{1}{\lambda} \log K(\epsilon)\right) \log x
=\frac{1}{\epsilon} \log x, 
\]   
where we used the definition of $K(\epsilon)$ to obtain the last equality. Therefore we see for every $\epsilon<\epsilon_0\wedge 1$ that there is $x_3(\epsilon)>1$ such that 
\begin{equation} \label{eq.3.2.8}
\int_{x}^{x/K(\epsilon)} \frac{1}{f(u)}\,du  < \frac{2}{\epsilon}\log x, \quad x\geq x_3(\epsilon),
\end{equation} 
regardless as to whether $\lambda\in (0,\infty]$. Therefore this implies for $x\geq x_3(\epsilon)$ that 
\[
F(x/K(\epsilon))-F(x)-\frac{2}{\epsilon}\log x = \int_{x}^{x/K(\epsilon)} \frac{1}{f(u)}\,du - 
\frac{2}{\epsilon}\log x<0.
\]
Therefore as $u_n^\epsilon\to\infty$ as $n\to\infty$, there is $N_2(\epsilon)\in \mathbb{N}$ such that 
for $n\geq N_2(\epsilon)$ we have $u_n^\epsilon>x_3(\epsilon)$. Thus with 
$n\geq N_3(\epsilon):=\max(N_1(\epsilon),N_2(\epsilon))$ we have 
\begin{equation} \label{eq.3.2.9}
F(u_n^\epsilon/K(\epsilon))-F(u_n^\epsilon)-\frac{2}{\epsilon}\log u_n^\epsilon < 0.
\end{equation} 
On the other hand, as $n\geq N_3(\epsilon)\geq N_1(\epsilon)$ and $t_n^\epsilon>T_3(\epsilon)$ for 
$n\geq N_1(\epsilon)$, we have from \eqref{eq.3.2.6pr} that
\begin{equation} \label{eq.3.2.10}
F(x(t_n^\epsilon)) - Mt_n^\epsilon + \frac{2}{\epsilon} \log x(t_n^\epsilon)<0.
\end{equation}  
Therefore for $n\geq N_3(\epsilon)$, since $F(u_n^\epsilon/K(\epsilon))=Mt_n^\epsilon$ and $x(t_n^\epsilon)>u_n^\epsilon$ we get from \eqref{eq.3.2.9} and \eqref{eq.3.2.10} that
\begin{align*} 
0&>F(x(t_n^\epsilon)) - Mt_n^\epsilon + \frac{2}{\epsilon} \log x(t_n^\epsilon) \\
&= -F(u_n^\epsilon/K(\epsilon)) +F(x(t_n^\epsilon))  + \frac{2}{\epsilon} \log x(t_n^\epsilon)\\
&> -F(u_n^\epsilon/K(\epsilon)) +F(u_n^\epsilon)  + \frac{2}{\epsilon} \log u_n^\epsilon>0,
\end{align*}
which is a contradiction, and the monotonicity of $x\mapsto F(x)+\epsilon^{-1}\log x$ was used at the penultimate step. This implies that \eqref{eq.3.2.6} is false, so we must have $\limsup_{t\to\infty} x(t)/F^{-1}(Mt)=0$,
as claimed.   
\end{proof}

\section{Proof of Theorem~\ref{thm.2.2}}
Our hypotheses on $\psi$ and the positivity of $f$ immediately yield that $x(t)\to\infty$ as $t\to\infty$. Thus there exists $T_1$ such that $x(t)> x_1$ for all $t \geq T_1$. Letting $t > T_1 + \tau$, and noting that $t \mapsto x(t)$ is increasing on $[0,\infty)$ we have 
\[
0 < x'(t) = \int_{[-\tau,0]}\mu(ds)f(x(t+s)) \leq  \int_{[-\tau,0]}\mu(ds)f(x(t)) \leq M f(x(t)), \,\, t > T_1 + \tau.
\]
This means that $x'(t)/x(t) \to 0$ as $t \to \infty$. Notice also that integration of the inequality $x'(s)/f(x(s))\leq M$ 
for $s\in [T_1+\tau,t)$ yields $F(x(t))-F(x(T_1+\tau))\leq M(t-(T_1+\tau))$ for $t\geq \tau$, from which the elementary estimate 
\begin{equation} \label{eq.elemlimsup}
\limsup_{t\to\infty} \frac{x(t)}{F^{-1}(Mt)}\leq 1
\end{equation}
results. In deducing \eqref{eq.elemlimsup}, we have used the fact that the sublinearity of $f$ implies that $F^{-1}(y+c)/F^{-1}(y)\to 1$ as $y\to\infty$ for any $c\in\mathbb{R}$. 

Furthermore, for $t > T_1 + \tau$, $f(x(t+s))\geq f(x(t-\tau))$ for $s \in [-\tau,0]$. Thus
$
x'(t) \geq  M f(x(t-\tau)), \,\, t > T_1 + \tau.
$
Applying the Mean Value Theorem to the continuous function $f \circ x$ for each $t > T_1 + \tau$ there exists $\theta_t \in [0,\tau]$ such that 
$
f(x(t)) = f(x(t-\tau)) + f'(x(t-\theta_t))\tau.
$
Combining this identity with the fact that $f'(x) \to 0$ as $t \to \infty$, we see that $f(x(t-\tau))/f(x(t)) \to 1$ as $t \to \infty$. Hence 
$
\lim_{t \to \infty}x'(t)/f(x(t)) = M.
$
Now for every $\epsilon \in (0,1/2)$ there exists $T_2(\epsilon)>0$ such that 
\[
M(1-\epsilon) < \frac{x'(t)}{f(x(t))} \leq M, \mbox{ for all } t > T_2(\epsilon).
\]
Define next 
\begin{align*}
\tilde{M}(x)&:=\int_{[-\tau,-x]} \mu(ds), \quad x\in [0,\tau].\\
\delta(t)&:=\int_{[-\tau,0]} \mu(ds) \{f(x(t))-f(x(t+s))\},\quad t\geq 0
\end{align*}
For $t\geq \tau$, we have 
\[
\delta(t)=\int_{t-\tau}^t \tilde{M}(t-s)f'(x(s))x'(s)\,ds, \quad t\geq\tau.
\]
Therefore, if we take $T_3(\epsilon)=\max(T_1+\tau,T_2(\epsilon))$ we have 
\[
\delta(t)<\int_{t-\tau}^t \tilde{M}(t-s)f'(x(s)) Mf(x(s))\,ds \leq \int_{t-\tau}^t \tilde{M}(t-s)f'(x(s))\,ds M f(x(t))
\]
and 
\begin{align*}
\delta(t)&>\int_{t-\tau}^t \tilde{M}(t-s)f'(x(s)) M(1-\epsilon) f(x(s))\,ds \\
&\geq \int_{t-\tau}^t \tilde{M}(t-s)f'(x(s))\,ds \cdot M(1-\epsilon) f(x(t-\tau). 
\end{align*}
Since $f(x(t-\tau))/f(x(t))\to 1$ as $t\to\infty$, taking the limit superior and limit inferior as $t\to\infty$, and then letting $\epsilon\to 0^+$ we get $I_1(t)/I(t)\to 1$ as $t\to\infty$, where we have defined 
\[
I_1(t)=\frac{\delta(t)}{f(x(t))M}, \quad 
I(t)=\int_{t-\tau}^t \tilde{M}(t-s)f'(x(s))\,ds.
\]
With this notation, 
\begin{equation} \label{eq.id1}
\frac{1}{M}\frac{x'(t)}{f(x(t))}=1-I_1(t).
\end{equation}
We also define $J$ and $J_1$ by
\begin{equation} \label{def.JJ1}
J(t)=M\int_{T(\epsilon)}^t I(s)\,ds, \quad J_1(t)=M\int_{T(\epsilon)}^t I_1(s)\,ds, \quad t\geq T(\epsilon),
\end{equation}
Next, for every $\epsilon\in (0,1/2)$ define $T(\epsilon)>T_1+\tau$ such that for $t\geq T(\epsilon)$ 
\begin{gather*}
M(1-\epsilon)<\frac{x'(t)}{f(x(t))}\leq M, \quad 
f(x(t-\tau))>(1-\epsilon)f(x(t))
\end{gather*}
Integration of \eqref{eq.id1} over $[T(\epsilon),t]$, and using \eqref{def.JJ1} yields
\begin{equation} \label{eq.FxJ}
F(x(t))-Mt= F(x(T(\epsilon)))-MT(\epsilon)-J_1(t), \quad t\geq T(\epsilon).   
\end{equation}
Next, set 
\[
J^\ast=M\int_{T(\epsilon)-\tau}^{T(\epsilon)} \left(\int_{T(\epsilon)\vee u}^{u+\tau} \tilde{M}(s-u)\,ds\right) f'(x(u))\,du.
\]
We will now prove for $t\geq T(\epsilon)+\tau$, that
\begin{equation} \label{eq.Jrep} 
J(t)=J^\ast+M\int_{T(\epsilon)}^{t-\tau} \int_0^{\tau} \tilde{M}(v)\,dv f'(x(u))\,du + 
M\int_{t-\tau}^t \int_0^{t-u} \tilde{M}(v)\,dv f'(x(u))\,du.
\end{equation}
First, for $t\geq T(\epsilon)+\tau$ we have
\[
J(t)=M\int_{T(\epsilon)}^t I(s)\,ds =  M\int_{T(\epsilon)}^t \int_{s-\tau}^s \tilde{M}(s-u)f'(x(u))\,du \,ds.
\]
By reversing the order of integration we get
\begin{align*}
J(t)&=M\int_{T(\epsilon)-\tau}^t \left(\int_{T(\epsilon)\vee u}^{(u+\tau)\wedge t} \tilde{M}(s-u)\,ds\right) f'(x(u))\,du.
\end{align*}
Splitting the integral gives
\begin{multline*}
J(t)=M\int_{T(\epsilon)-\tau}^{T(\epsilon)} \left(\int_{T(\epsilon)\vee u}^{u+\tau} \tilde{M}(s-u)\,ds\right) f'(x(u))\,du\\
+M\int_{T(\epsilon)}^{t-\tau} \left(\int_{T(\epsilon)\vee u}^{u+\tau} \tilde{M}(s-u)\,ds\right) f'(x(u))\,du
\\+ M\int_{t-\tau}^t \left(\int_{T(\epsilon)\vee u}^{t} \tilde{M}(s-u)\,ds\right) f'(x(u))\,du,
\end{multline*}
and noting that the first integral is $J^\ast$ and tidying up the limits of the integrals yields
\[
J(t)=J^\ast
+M\int_{T(\epsilon)}^{t-\tau} \left(\int_{u}^{u+\tau} \tilde{M}(s-u)\,ds\right) f'(x(u))\,du
+ M\int_{t-\tau}^t \left(\int_{u}^{t} \tilde{M}(s-u)\,ds\right) f'(x(u))\,du.
\]
Substituting $v=s-u$ in the inner integrals now gives \eqref{eq.Jrep}.

Now that we have proven \eqref{eq.Jrep}, we will use it to obtain asymptotic estimates on $J$. 
Since each of the integrands in \eqref{eq.Jrep} are positive for $t\geq T(\epsilon)+\tau$, we have 
\begin{equation} \label{eq.jlower}
J(t)\geq MC\int_{T}^{t-\tau} f'(x(u))\,du, \quad t\geq T(\epsilon)+\tau,
\end{equation}
because 
\[
C=\int_0^{\tau} \tilde{M}(v)\,dv.
\]
We now need a corresponding upper estimate for $J$. Since $\tilde{M}:[0,\tau]\to \mathbb{R}^+$, for $u\in [t-\tau,t]$, we have
\[
\int_{0}^{t-u} \tilde{M}(v)\,dv \leq \int_0^{\tau} \tilde{M}(v)\,dv =C.
\]
 Therefore
\begin{align*}
J(t) &= J^\ast+ MC\int_{T(\epsilon)}^{t-\tau} f'(x(u))\,du +M\int_{t-\tau}^t \int_0^{t-u} \tilde{M}(v)\,dv f'(x(u))\,du \\
&\leq J^\ast+ MC\int_{T(\epsilon)}^{t-\tau} f'(x(u))\,du +M\int_{t-\tau}^t \int_0^{t-u} \tilde{M}(v)\,dv f'(x(u))\,du.
\end{align*}
Thus
\begin{equation} \label{eq.jupper}
J(t) \leq J^\ast+ MC\int_{T(\epsilon)}^t f'(x(u))\,du, \quad t\geq T(\epsilon)+\tau.
\end{equation}
Next, we estimate the integrals on the righthand sides of \eqref{eq.jlower}, \eqref{eq.jupper}.
For $t\geq T(\epsilon)+\tau$ we have 
\begin{align*}
\int_{T(\epsilon)}^{t-\tau} Mf'(x(u))\,du 
&= \int_{T(\epsilon)}^{t-\tau} \frac{f'(x(u))}{f(x(u))}x'(u)\frac{Mf(x(u))}{x'(u)}\,du \\
&\geq \int_{T(\epsilon)}^{t-\tau} \frac{f'(x(u))}{f(x(u))}x'(u)\,du\\
&= \log f(x(t-\tau))-\log f(x(T(\epsilon)))\\
&> \log(1-\epsilon) + \log f(x(t)) - \log f(x(T(\epsilon))).
\end{align*}
Therefore, from \eqref{eq.jlower}, we have 
\[
\liminf_{t\to\infty} \frac{J(t)}{\log f(x(t))}\geq C.
\]
Similarly, we get for $t\geq T(\epsilon)+\tau$ we have 
\begin{align*}
\int_{T(\epsilon)}^{t} Mf'(x(u))\,du 
&= \int_{T(\epsilon)}^{t} \frac{f'(x(u))}{f(x(u))}x'(u)\frac{Mf(x(u))}{x'(u)}\,du \\
&\leq \frac{1}{1-\epsilon}\int_{T(\epsilon)}^{t} \frac{f'(x(u))}{f(x(u))}x'(u)\,du\\
&= \frac{1}{1-\epsilon}\left(\log f(x(t))-\log f(x(T(\epsilon)))\right).
\end{align*}
Therefore, from \eqref{eq.jupper}, we have 
\[
\limsup_{t\to\infty} \frac{J(t)}{\log f(x(t))}\leq C.
\]
Combining this with the limit inferior, we get 
\begin{equation} \label{eq.Jlogf}
\lim_{t\to\infty} \frac{J(t)}{\log f(x(t))}= C.
\end{equation}
Therefore, as we have assumed $f(x)\to\infty$ as $x\to\infty$, we see that $J(t)\to\infty$ as $t\to\infty$. Thus
by \eqref{def.JJ1}, \eqref{eq.Jlogf} and L'H\^opital's rule, we get
\[
\lim_{t\to\infty} \frac{J_1(t)}{\log f(x(t))}= C.
\] 
Putting this limit into \eqref{eq.FxJ} yields \eqref{eq.Fxlogf}. 
The result now follows from Lemma~\ref{lemma.l1}.

\section{Proof of Theorem~\ref{thm.2.3} with Finite First Moment}
Define  $\epsilon_1(t)=\int_{(t,\infty)}\mu(ds)$ for $t\geq 0$ and 
\[
\delta_1(t)=\epsilon_1(t)f(x(t)), \quad t\geq 0.
\]
Clearly $\delta_1(t)>0$ for all $t\geq 0$. Define also $\delta_2$ by 
\[
\delta_2(t)=\int_{[0,t]} \mu(ds)\left(f(x(t))-f(x(t-s))\right), \quad t\geq 0.
\]
We have that $x'(t)\geq 0$ for all $t\geq 0$, and $x(t)\to\infty$ as $t\to\infty$. Therefore there is $T_1^I>0$ such that
$x(t)>x_1$ for all $t\geq T_1^I$. Define $f^\ast=\max_{x\in [0,x_1]} f(x)$. Since $f(x)\to\infty$ as $x\to\infty$, it follows that there is $x_2>x_1$ such that $f(x)>f^\ast$ for all $x\geq x_2$, and there is also $T_1^{II}>0$ such 
that $x(t)>x_2$ for all $t\geq T_1^{II}$. Define $T_1^{III}=\max(T_1^I,T_1^{II})$, and let $t\geq T_1^{III}$. Then 
as $f$ is increasing on $[x_2,\infty)\supset[x_1,\infty)$, we have 
\[
f(x(t))>f(x_2)\geq f^\ast=\max_{y\in [0,x_1]} f(y).
\] 
Now, let $u\in [0,t)$. If $x(u)\leq x_1$, then $f(x(u))\leq f^\ast<f(x(t))$. If $x(u)>x_1$, then $x(t)\geq x(u)>x_1$ and 
$f(x(t))\geq f(x(u))$. Therefore 
\[
f(x(t))>f(x(u)), \quad 0\leq u<t, \quad t\geq T_1^{III}.
\] 
Thus $\delta_2(t)>0$ for all $t\geq T_1^{III}$. Notice for $t\geq 0$ we have 
\[
x'(t)=Mf(x(t))-\delta_1(t)-\delta_2(t).
\]
Since $\delta_1$ and $\delta_2$ are positive on $[T_1^{III},\infty)$, it follows that 
\begin{equation} \label{eq.xprupper}
x'(t)\leq Mf(x(t)), \quad t\geq T_1^{III}.
\end{equation}
Integration leads to 
\begin{equation} \label{eq.2.0}
\limsup_{t\to\infty} \frac{x(t)}{F^{-1}(Mt)}\leq 1.
\end{equation} 
Define for $0\leq a\leq b<+\infty$
\[
M(a,b)=\int_{[a,b]} \mu(ds).
\]
By Fubini's theorem 
\[
\delta_2(t)=\int_0^t M(t-u,t) f'(x(u))x'(u)\,du.
\]
It can be proven, as in the proof of Theorem \ref{thm.2.2}, that $x'(t)/f(x(t))\to M$ as $t\to\infty$. The details are given in \cite[Theorem 1]{applebypatterson2016growth}. From this limit, we have for every $\epsilon\in (0,1)$, that there is $T_1^{IV}(\epsilon)>0$ such that 
\begin{equation} \label{eq.xprlower}
x'(t)>M(1-\epsilon)f(x(t)), \quad t\geq T_1^{IV}(\epsilon).
\end{equation}
Define $T_1(\epsilon)=\max(T_1^{IV}(\epsilon),T_1^{III})$, and finally 
\[
\delta_3(t)=\int_0^{T_1(\epsilon)} M(t-u,t) f'(x(u))x'(u)\,du, \quad t\geq T_1(\epsilon).
\]   
Then for $t\geq T_1(\epsilon)$ we have 
\begin{equation} \label{eq.delt2}
\delta_2(t)=\delta_3(t)+\int_{T_1(\epsilon)}^t M(t-u,t)f'(x(u))x'(u)\,du.
\end{equation}
Also define 
\[
I_1(t)=\frac{1}{M}\epsilon_1(t), \quad \tilde{I}_2(t)=\frac{\delta_2(t)}{Mf(x(t))}. 
\]
Define 
\[
K_1(\epsilon):= \int_0^{T_1(\epsilon)} |f'(x(u))|x'(u)\,du.
\]
Then for $t\geq T_1(\epsilon)$, we have 
\begin{equation} \label{eq.delt3bdd}
|\delta_3(t)|\leq K_1(\epsilon) \int_{[t-T_1(\epsilon),t]} \mu(ds)=: \delta_4(t).
\end{equation}
Since $t\mapsto f(x(t))$ is increasing on $[T_1,\infty)$, we get from \eqref{eq.xprupper}, \eqref{eq.delt2}, and \eqref{eq.delt3bdd} the bound
\[
\delta_2(t)\leq \delta_4(t) + M\int_{T_1(\epsilon)}^t M(t-u,t)f'(x(u))\,du \cdot f(x(t)).
\]
Since 
\[
\lim_{t\to\infty} \int_{[0,t]} s\mu(ds)=C\in (0,\infty),
\]
it follows for every $\epsilon\in (0,1)$ that there exists $T_2(\epsilon)>0$ such that 
\[
\int_{[0,T_2(\epsilon)]} s\mu(ds)\geq C(1-\epsilon).
\]
We also have that 
\[
\lim_{t\to\infty} \frac{f(x(t-T_2(\epsilon)))}{f(x(t))}=1.
\]
Therefore, for every $\eta\in (0,1)$ there is $T_3'(\eta,\epsilon)>0$ such that for all $t\geq T_3'(\eta,\epsilon)$ 
we have $f(x(t-T_2(\epsilon)))>(1-\eta)f(x(t))$. Fix $\eta=\epsilon$ and set $T_3'(\epsilon)=T_3'(\epsilon,\epsilon)$. 
Then for $t\geq T_3'(\epsilon)$ we have $f(x(t-T_2(\epsilon)))>(1-\epsilon)f(x(t))$. Now, let $t\geq T_1(\epsilon)+T_2(\epsilon)+T_3'(\epsilon)$. Then from \eqref{eq.xprlower}, \eqref{eq.delt2} and \eqref{eq.delt3bdd} we have 
\begin{align*}
\delta_2(t)&\geq -|\delta_3(t)|+\int_{t-T_2(\epsilon)}^t M(t-u,t)f'(x(u))x'(u)\,du \\
&>-\delta_4(t)+\int_{t-T_2(\epsilon)}^t M(t-u,t)f'(x(u)) M(1-\epsilon)f(x(u))\,du \\
&>-\delta_4(t)+M(1-\epsilon) \int_{t-T_2(\epsilon)}^t M(t-u,t)f'(x(u)) \,du \cdot f(x(t-T_2(\epsilon)))\\
&>-\delta_4(t)+M(1-\epsilon)^2 \int_{t-T_2(\epsilon)}^t M(t-u,t)f'(x(u)) \,du \cdot f(x(t))).
\end{align*}
Define 
\[
\tilde{I}_3(t)=\frac{\delta_4(t)}{Mf(x(t))}>0, \quad t\geq T_1(\epsilon)+T_2(\epsilon).
\]
Then for $t\geq T_1(\epsilon)+T_2(\epsilon)+T_3'(\epsilon)=:T_3(\epsilon)$, we have 
\begin{multline} \label{eq.2.5}
-\tilde{I}_3(t)+(1-\epsilon)^2 \int_{t-T_2(\epsilon)}^t M(t-u,t)f'(x(u)) \,du
<
\tilde{I}_2(t)\\
<\tilde{I}_3(t)+ \int_{T_1(\epsilon)}^t M(t-u,t)f'(x(u))\,du.
\end{multline}
Since $x'(t)/(Mf(x(t)))=1-I_1(t)-\tilde{I}_2(t)$, by defining 
\[
J(t)=\int_{T_3(\epsilon)}^t M\tilde{I}_2(s), \quad t\geq T_3(\epsilon)
\]
integration yields
\begin{equation} \label{eq.2.6}
F(x(t))-Mt=F(x(T_3(\epsilon)))-MT_3(\epsilon) - \int_{T_3(\epsilon)}^t \epsilon_1(s)\,ds - J(t), \quad t\geq T_3(\epsilon).
\end{equation}
We can readily estimate the third term on the right--hand side: for $t\geq T_3(\epsilon)$ we have by Fubini's theorem
\begin{align*}
 \int_{T_3(\epsilon)}^t \epsilon_1(s)\,ds
&=\int_{[T_3(\epsilon),\infty)} \int_{[T_3(\epsilon),t\wedge u]} \, ds \,\mu(du)\\
&=\int_{[T_3(\epsilon),\infty)} (t\wedge u - T_3(\epsilon))\, \mu(du)
\leq \int_{[T_3(\epsilon),\infty)} ( u - T_3(\epsilon))\, \mu(du)\leq C.
\end{align*}
We estimate for $t\geq T_3(\epsilon)$ the integral
\[
\int_{T_3(\epsilon)}^t M\tilde{I}_3(s)\,ds.
\]
Since $f$ and $x$ are increasing, by \eqref{eq.delt3bdd} and Fubini's theorem we get 
\begin{align*}
\int_{T_3(\epsilon)}^t M\tilde{I}_3(s)\,ds
&\leq \frac{K_1(\epsilon)}{f(x(T_3(\epsilon)))}\int_{T_3(\epsilon)}^t \int_{[s-T_1(\epsilon),s]}\mu(du)\,ds\\
&\leq \frac{K_1(\epsilon)}{f(x(T_3(\epsilon)))}\int_{T_3(\epsilon)}^\infty \int_{[s-T_1(\epsilon),s]}\mu(du)\,ds\\
&=\frac{K_1(\epsilon)}{f(x(T_3(\epsilon)))} \int_{[T_3(\epsilon)-T_1(\epsilon),\infty)} (u+T_1(\epsilon)-T_3(\epsilon))\,\mu(du)=:C_1(\epsilon).
\end{align*}
Therefore 
\begin{equation} \label{eq.2.7}
 0\leq \int_{T_3(\epsilon)}^t \epsilon_1(s)\,ds \leq C, \quad 
0\leq \int_{T_3(\epsilon)}^t M\tilde{I}_3(s)\,ds \leq C_1(\epsilon), \quad t\geq T_3(\epsilon).
\end{equation}
From the definition of $J$, \eqref{eq.2.5} and \eqref{eq.2.7}, for $t\geq T_3(\epsilon)$ we have 
\begin{align} \label{eq.2.81}
J(t)&\geq -C_1(\epsilon)+M(1-\epsilon)^2 \int_{T_3(\epsilon)}^t \int_{s-T_2(\epsilon)}^s M(s-u,s)f'(x(u)) \,du\,ds, \\
\label{eq.2.82}
J(t)&\leq C_1(\epsilon)+ M\int_{T_3(\epsilon)}^t \int_{T_1(\epsilon)}^s M(s-u,s)f'(x(u))\,du\,ds.
\end{align}
Next, set $T_4(\epsilon)=T_2(\epsilon)+T_3(\epsilon)$, and let $t\geq T_4(\epsilon)$. By reversing the order of integration in \eqref{eq.281} and splitting the integral, and using the positivity of the integrands, we get
\begin{align*}
J(t)&\geq -C_1(\epsilon)+
M(1-\epsilon)^2 \int_{T_3(\epsilon)-T_2(\epsilon)}^{T_3(\epsilon)} \int_{T_3(\epsilon)\vee u}^{t\wedge(u+T_2)} 
M(s-u,s)\,ds f'(x(u)) \,du\\
&\qquad+ 
M(1-\epsilon)^2 \int_{T_3(\epsilon)}^{t} \int_{u}^{t\wedge(u+T_2(\epsilon))} 
M(s-u,s)\,ds f'(x(u)) \,du\\
&> -C_1(\epsilon)+
M(1-\epsilon)^2 \int_{T_3(\epsilon)}^{t-T_2(\epsilon)} \int_{u}^{t\wedge(u+T_2(\epsilon))} 
M(s-u,s)\,ds f'(x(u)) \,du\\
&\qquad+ 
M(1-\epsilon)^2 \int_{t-T_2(\epsilon)}^{t} \int_{u}^{t} M(s-u,s)\,ds f'(x(u)) \,du\\
&> -C_1(\epsilon)+
M(1-\epsilon)^2 \int_{T_3(\epsilon)}^{t-T_2(\epsilon)} \int_{u}^{u+T_2(\epsilon)} M(s-u,s)\,ds f'(x(u)) \,du.
\end{align*} 
For $u\in [T_3,t-T_2]$, by making the substitution $v=s-u$ and reversing the order of integration we get
\begin{align*}
\int_{u}^{u+T_2(\epsilon)} M(s-u,s)\,ds
&=\int_{0}^{T_2(\epsilon)} M(v,v+u)\,dv=\int_{0}^{T_2(\epsilon)} \int_{[v,v+u]} \mu(dw)\,dv\\
&=\int_{[0,T_2(\epsilon)+u]} \left( w\wedge T_2(\epsilon) - (w-u)\vee 0\right) \mu(dw)\\
&=\int_{[0,T_2(\epsilon)]} w\mu(dw) \\
&\qquad+
\int_{(T_2(\epsilon),T_2(\epsilon)+u]} \left(T_2(\epsilon) - (w-u)\vee 0\right) \mu(dw).
\end{align*}
Since the integrand in the second integral is non--negative, we have by the definition of $T_2$,
\[
\int_{u}^{u+T_2(\epsilon)} M(s-u,s)\,ds \geq \int_{[0,T_2(\epsilon)]} w\mu(dw) \geq C(1-\epsilon).
\] 
Therefore for $t\geq T_4(\epsilon)$ we have 
\begin{equation} \label{eq.2.91}
J(t)>-C_1(\epsilon)+
MC(1-\epsilon)^3 \int_{T_3(\epsilon)}^{t-T_2(\epsilon)}  f'(x(u)) \,du.
\end{equation}
For $t\geq T_4(\epsilon)$, because $T_3>T_1$ we have from \eqref{eq.2.82} and an interchange of integration order
\begin{align*}
J(t)&\leq C_1(\epsilon)+ M\int_{T_3(\epsilon)}^t \int_{T_1(\epsilon)}^s M(s-u,s)f'(x(u))\,du\,ds\\
&\leq C_1(\epsilon)+ M\int_{T_1(\epsilon)}^t \int_{T_3(\epsilon)\vee u}^t M(s-u,s)\,ds f'(x(u))\,du.
\end{align*}
Splitting the integral gives for $t\geq T_4(\epsilon)$
\begin{multline} \label{eq.2.10}
J(t)\leq C_1(\epsilon)+ M\int_{T_1(\epsilon)}^{T_3(\epsilon)} \int_{T_3(\epsilon)}^t M(s-u,s)\,ds f'(x(u))\,du\\
+ M\int_{T_3(\epsilon)}^t \int_{u}^t M(s-u,s)\,ds f'(x(u))\,du.
\end{multline}
It can now be checked that 
\begin{equation} \label{eq.2.111}
\int_u^t M(s-u,s)\,ds \leq \int_{[0,t]} w\mu(dw), \quad t\geq 2u, t\geq u\geq T_3(\epsilon),
\end{equation}
and likewise that 
\begin{equation} \label{eq.2.112}
\int_u^t M(s-u,s)\,ds \leq \int_{[0,t]} w\mu(dw), \quad t<2u, t\geq u\geq T_3(\epsilon).
\end{equation}
We defer the proof of these estimates to the end. 
Putting \eqref{eq.2.111} and \eqref{eq.2.112} into \eqref{eq.2.10} yields for $t\geq T_4(\epsilon)$
\begin{equation} \label{eq.2.12}
J(t)\leq C_1(\epsilon)+ M C\int_{T_3(\epsilon)}^t f'(x(u))\,du
+ M\int_{T_1(\epsilon)}^{T_3(\epsilon)} \int_{T_3}^t M(s-u,s)\,ds f'(x(u))\,du.
\end{equation}
Next for $u\in [T_1,T_3]$ and $t\geq T_4$, we get, by making the substitution $v=s-u$, and an exchange of order of integration 
\begin{align*}
\int_{T_3}^t M(s-u,s)\,ds
&=\int_{T_3-u}^{t-u} \int_{[v,v+u]} \mu(dw)\,dv\\
&\leq \int_{0}^{t-u} \int_{[v,v+u]} \mu(dw)\,dv\\
&=\int_{[0,u]} \left( (t-u)\wedge w\right)\,\mu(dw) + \int_{(u,t]}  \left( (t-u)\wedge w - (w-u)\right)  \mu(dw).
\end{align*} 
Again, considering the cases $t\geq 2u$ and $t<2u$, we arrive at the estimates
\begin{equation} \label{eq.2.131}
\int_{T_3}^t M(s-u,s)\,ds \leq \int_{[0,t]} w\mu(dw), \quad t\geq 2u, t\geq T_4(\epsilon), u\in [T_1,T_3],
\end{equation}
and 
\begin{equation} \label{eq.2.132}
\int_{T_3}^t M(s-u,s)\,ds \leq \int_{[0,t]} w\mu(dw), \quad t<2u, t\geq T_4(\epsilon), u\in [T_1,T_3].
\end{equation}
We postpone the justification of these inequalities to the end. 
Using the fact that $\int_{[0,t]} w\mu(dw)\leq C$ for all $t\geq 0$, and 
putting \eqref{eq.2.131} and \eqref{eq.2.132} into \eqref{eq.2.12}, yields 
\begin{equation} \label{eq.2.14}
J(t)\leq C_1(\epsilon)+ M C\int_{T_1(\epsilon)}^t f'(x(u))\,du, \quad t\geq T_4(\epsilon)
\end{equation}

Next for $t\geq T_4$ we estimate the integral in \eqref{eq.2.91}: using \eqref{eq.xprupper} and the fact that 
for $t\geq T_3'(\epsilon)$ we have $f(x(t-T_2(\epsilon)))>(1-\epsilon)f(x(t))$, we get
\begin{align*}
M \int_{T_3(\epsilon)}^{t-T_2(\epsilon)}  f'(x(u)) \,du
&= \int_{T_3(\epsilon)}^{t-T_2(\epsilon)}  \frac{f'(x(u))}{f(x(u))} \frac{Mf(x(u))}{x'(u)}x'(u)\,du\\
&\geq \int_{T_3(\epsilon)}^{t-T_2(\epsilon)}  \frac{f'(x(u))}{f(x(u))} x'(u)\,du\\
&=\log f(x(t-T_2(\epsilon)))-\log f(x(T_3(\epsilon)))\\
&>\log f(x(t))+\log (1-\epsilon) -\log f(x(T_3(\epsilon))). 
\end{align*}
Therefore from \eqref{eq.2.91}, we get 
\[
\liminf_{t\to\infty} \frac{J(t)}{\log f(x(t))}\geq C(1-\epsilon)^3.
\]
Letting $\epsilon\to 0^+$ yields
\begin{equation} \label{eq.2.15}
\liminf_{t\to\infty} \frac{J(t)}{\log f(x(t))}\geq C.
\end{equation}
For $t\geq T_4(\epsilon)$, we estimate the integral in \eqref{eq.2.14}. Using \eqref{eq.xprlower} we get
\begin{align*} 
J(t)&\leq C_1(\epsilon)+ M C\int_{T_1(\epsilon)}^t f'(x(u))\,du \\
&= C_1(\epsilon)+ C\int_{T_1(\epsilon)}^t \frac{f'(x(u))}{f(x(u))}\cdot \frac{Mf(x(u))}{x'(u)}x'(u)\,du \\
&\leq  C_1(\epsilon)+ \frac{C}{1-\epsilon}\int_{T_1(\epsilon)}^t \frac{f'(x(u))}{f(x(u))}x'(u)\,du \\
&=  C_1(\epsilon)+ \frac{C}{1-\epsilon}\left( \log f(x(t)) - \log f(x(T_1(\epsilon)))\right).
\end{align*}
Dividing across by $\log f(x(t))$, taking the limsup as $t\to\infty$, and then letting $\epsilon\to 0^+$ 
yields
\begin{equation*}
\limsup_{t\to\infty} \frac{J(t)}{\log f(x(t))}\leq C.
\end{equation*}
Combining this with \eqref{eq.2.15} gives
\begin{equation} \label{eq.2.17}
\lim_{t\to\infty} \frac{J(t)}{\log f(x(t))} = C.
\end{equation}
For $t\geq T_3(\epsilon)$, by \eqref{eq.2.6}, we have 
\[
\frac{F(x(t))-Mt}{\log f(x(t))}=\frac{F(x(T_3(\epsilon)))-MT_3(\epsilon) 
- \int_{T_3(\epsilon)}^t \epsilon_1(s)\,ds}{\log f(x(t))} - \frac{J(t)}{\log f(x(t))}.
\]
Since $0\leq \int_{T_3(\epsilon)}^t \epsilon_1(s)\,ds \leq C$, $\log f(x(t))\to \infty$ as $t\to\infty$ 
and \eqref{eq.2.17} holds, we immediately get 
\begin{equation} \label{eq.2.18}
\lim_{t\to\infty} \frac{F(x(t))-Mt}{\log f(x(t))}=-C.
\end{equation}
Recall that $x$ obeys \eqref{eq.2.0}, and $f$ obeys \eqref{eq.lambda} with $\lambda\in [0,\infty]$.
Therefore, we may apply Lemma~\ref{lemma.l1} to $x$ obeying \eqref{eq.2.0} and \eqref{eq.2.18}, 
from which we conclude that 
\[
\lim_{t\to\infty} \frac{x(t)}{F^{-1}(Mt)}=e^{-\lambda C},
\]
as required. This completes the proof of Theorem~\ref{thm.2.3} when $C<+\infty$.

It remains to dispense with the estimates \eqref{eq.2.111} and \eqref{eq.2.112}, 
as well as \eqref{eq.2.131} and \eqref{eq.2.132}. 
We start with \eqref{eq.2.111} and \eqref{eq.2.112}. For $t\geq u\geq T_3(\epsilon)$ we have 
\begin{align}
\int_u^t M(s-u,s)\,ds 
&= \int_{0}^{t-u} \int_{[v,v+u]} \mu(dw)\,dv \nonumber\\
&=\int_{[0,t]} \{(t-u)\wedge w - (w-u)\vee 0\} \mu(dw) \nonumber\\
&=\int_{[0,u)} \{(t-u)\wedge w \} \mu(dw) 
+ 
\int_{[u,t]}  \{(t-u)\wedge w - (w-u)\} \mu(dw).
\label{eq.keyM}
\end{align}
We now use \eqref{eq.keyM} to prove \eqref{eq.2.111} and \eqref{eq.2.112}.

If $t\geq 2u$, $t-u\geq u$, so 
\[
\int_{[0,u)} \{(t-u)\wedge w \} \mu(dw)=\int_{[0,u)} w\mu(dw).
\]
Similarly
\begin{align*}
\lefteqn{\int_{[u,t]}  \{(t-u)\wedge w - (w-u)\} \mu(dw)}\\
&=
\int_{[u,t-u]}  \{(t-u)\wedge w - (w-u)\} \mu(dw)
+
\int_{(t-u,t]}  \{(t-u)\wedge w - (w-u)\} \mu(dw)\\
&=
\int_{[u,t-u]}  u\mu(dw)+ \int_{(t-u,t]}  (t-w)  \mu(dw).
\end{align*}
Since $w\geq u$ in the first integral, and $w\geq t-u$ and $w-u\geq t-2u\geq 0$ in the second, we have 
\[
\int_{[u,t]}  \{(t-u)\wedge w - (w-u)\} \mu(dw) \leq \int_{[u,t]} w\mu(dw).
\] 
Combining this with the expression we have for the integral on $[0,u)$ in \eqref{eq.keyM} now gives 
the estimate in \eqref{eq.2.111}.

Now suppose that $t<2u$ so $t-u<u$. Then the first integral in \eqref{eq.keyM} is
\[
\int_{[0,u)} \{(t-u)\wedge w \} \mu(dw) \leq \int_{[0,u)} u\wedge w \mu(dw) = \int_{[0,u)} w\mu(dw).
\]
For $w\in [u,t]$, $t<2u$ we have $t-w\leq t-u<u\leq w$, it follows that 
\[
\int_{[u,t]}  \{(t-u)\wedge w - (w-u)\} \mu(dw) \leq \int_{[u,t]} w\mu(dw).
\]
Combining this with the first identity in this paragraph gives \eqref{eq.2.112}.

Now we turn to the proof of \eqref{eq.2.131} and \eqref{eq.2.132}: for $u\in [T_1,T_3]$ and $t\geq T_4$, we get 
\begin{align*}
\int_{T_3}^t M(s-u,s)\,ds &= \int_{T_3-u}^{t-u} \int_{[v,v+u]} \mu(dw) \,dv
\leq \int_0^{t-u} \int_{[v,v+u]}\mu(dw) \,dv.
\end{align*}
Hence 
\begin{equation} \label{def.M1}
\int_{T_3}^t M(s-u,s)\,ds\leq \int_0^{t-u} \int_{[w,w+u]} \mu(dv) \,dw=:M_1(u,t).
\end{equation}
Now for $t\geq u$ we have
\begin{align*}
M_1(u,t)
=\int_{[0,t]} \int_{(v-u)\vee 0}^{v\wedge(t-u)} \,dw \mu(dv)
=\int_{[0,t]} \{v\wedge(t-u) - (v-u)\vee 0\} \mu(dv).
\end{align*}
If $t>2u$ we have
\begin{align*}
M_1(u,t)
&=\int_{[0,u)} v  \mu(dv)
+   \int_{[u,t-u)} u \mu(dv) 
+ \int_{[t-u,t]} (t - v)\mu(dv)\\
&\leq \int_{[0,u)} v  \mu(dv)
+   \int_{[u,t-u)} v \mu(dv) 
+ \int_{[t-u,t]} (t - v)\mu(dv).
\end{align*}
In the last integrand $v\geq t-u>u$, so $t-v\leq u<v$. Hence
\[
M_1(u,t)\leq \int_{[0,t]} v  \mu(dv), \quad t>2u.
\]
If $t\leq 2u$ we have 
\begin{align*}
M_1(u,t)&=\int_{[0,t-u)} v   \mu(dv)
+   \int_{[t-u,u)} (t-u)    \mu(dv) 
+ \int_{[u,t]} (t-v)  \mu(dv)\\
&\leq \int_{[0,t-u)} v  \mu(dv)
+   \int_{[t-u,u)} v \mu(dv) 
+ \int_{[u,t]} (t - v)\mu(dv).
\end{align*}
In the last integrand we have $t\geq v\geq u$, so $t-v\leq t-u\leq u\leq v$. Therefore
\[
M_1(u,t)
\leq  \int_{[0,t]} v  \mu(dv), \quad t\leq 2u.
\]
Combining the cases where $t>2u$ and $t\leq 2u$ we have the consolidated estimate
\begin{equation} \label{eq.estM1}
M_1(u,t)\leq  \int_{[0,t]} v  \mu(dv), \quad t\geq u.
\end{equation}
Thus
\[
\int_{T_3}^t M(s-u,s)\,ds \leq \int_{[0,t]} v  \mu(dv), \quad t\geq u\geq T_3.
\]
establishing both \eqref{eq.2.131} and \eqref{eq.2.132}. This completes the proof.

\section{Proof of Theorem~\ref{thm.2.3} with Infinite First Moment}
By the same considerations made in the case when $C<+\infty$, we have 
\[
x'(t)\leq Mf(x(t)), \quad t\geq T_1^{III}, \quad x'(t)>\frac{M}{2} f(x(t)), \quad t\geq T^{IV}(1/2), 
\]
and \eqref{eq.2.0} holds. We take $T_1=\max(T_1^{III},T_1^{IV})$ recalling the definition of $T_1^{III}$ 
in the case when $C<+\infty$. For $t\geq T_1$, we still have the estimate 
\[
|\delta_3(t)|\leq \int_{[t-T_1,t]} \mu(ds) \cdot K_1=:\delta_4(t)
\] 
where 
\[
K_1=\int_0^{T_1} |f'(x(u))|x'(u)\,du.
\]
Next, as $\int_{[0,t]} s\mu(ds)\to\infty$ as $t\to\infty$, for every $N\in\mathbb{N}$ there is $T_2=T_2(N)$ 
such that 
\begin{equation} \label{eq.3.2.1}
\int_{[0,T_2(N)]} s\mu(ds) >N.
\end{equation} 
Since $T_2(N)$ is fixed, the limit 
\[
\lim_{t\to\infty} \frac{f(x(t-T_2(N)))}{f(x(t))}=1
\]
prevails. Therefore, for every $\eta\in (0,1)$ there is $\tilde{T}_3(\eta,N)>0$ such that 
$t\geq \tilde{T}_3(\eta,N)$ implies $f(x(t-T_2(N)))>(1-\eta)f(x(t))$. Set $\eta=1/2$. Then, with 
$T_3'(N)= \tilde{T}_3(1/2,N)$, we have 
\[
f(x(t-T_2(N)))> \frac{1}{2}f(x(t)), \quad t\geq T_3'(N).
\]
Hence, for $t\geq T_1+T_2(N)+T_3'(N)$, we can argue as above to obtain 
\[
\tilde{I}_2(t)\geq -\tilde{I}_3(t)+\frac{M}{4}\int_{t-T_2}^t M(t-u,t) f'(x(u))\,du,
\] 
where $\tilde{I}_2(t)=\delta_2(t)/(Mf(x(t)))$, $\tilde{I}_3(t)=\delta_4(t)/(Mf(x(t)))$. Define 
$T_3(N)=T_1+T_2(N)+T_3'(N)$. For $t\geq T_3(N)$ we have 
\begin{align*}
F(x(t))-Mt &= F(x(T_3)) - MT_3 - \int_{T_3}^t \epsilon_1(s)\,ds  - \int_{T_3}^t M\tilde{I}_2(s)\,ds
\end{align*}
Hence for $t\geq T_3(N)$ we have
\begin{equation} \label{eq.3.2.2}
F(x(t))-Mt 
\leq F(x(T_3)) - MT_3 + M\int_{T_3}^t \tilde{I}_3(s)\,ds 
- \frac{M}{4} \int_{T_3}^t \int_{s-T_2}^s M(s-u,s) f'(x(u))\,du.
\end{equation}
Next, we estimate the third term on the righthand side of \eqref{eq.3.2.2}. By definition for $t\geq T_3$, 
we get 
\[
M\int_{T_3}^t \tilde{I}_3(s)\,ds = K_1\int_{T_3}^t \frac{1}{f(x(s))} \int_{[s-T_1,s]} \mu(du)\,ds
\leq K_1 M \int_{T_3}^t \frac{1}{f(x(s))}\,ds.
\]
Since $t\geq T_3>T_1^{IV}$ we have 
\begin{align*}
M\int_{T_3}^t \tilde{I}_3(s)\,ds 
&\leq K_1 \int_{T_3}^t \frac{x'(s)}{f^2(x(s))} \cdot \frac{Mf(x(s))}{x'(s)}\,ds \\
&\leq 2K_1 \int_{T_3}^t \frac{x'(s)}{f^2(x(s))}\,ds
=2K_1 \int_{x(T_3)}^{x(t)} \frac{1}{f^2(u)}\,du.
\end{align*}
Now, as $\lim_{x\to\infty} f(x)/(x/\log x)=\lambda\in (0,\infty]$ and $f(x)/x\to 0$ as $x\to\infty$, it 
follows that $\log f(x)/\log x\to 1$ as $x\to\infty$. Hence 
\[
\lim_{x\to\infty} \frac{\log (1/f^2(x))}{\log x} = -2.
\]
Therefore $\int_1^\infty f^{-2}(u)\,du<+\infty$, and so as $x(T_3)>x_1$ we have 
\begin{equation} \label{eq.3.2.3}
M\int_{T_3}^t \tilde{I}_3(s)\,ds \leq 2K_1 \int_{x_1}^\infty \frac{1}{f^2(u)}\,du, \quad t\geq T_3.
\end{equation}
Letting 
\[
K_2(N)=F(x(T_3(N)))-MT_3(N)+2K_1 \int_{x_1}^\infty \frac{1}{f^2(u)}\,du,
\]
we have from \eqref{eq.3.2.3} and \eqref{eq.3.2.2} that
\begin{equation} \label{eq.3.2.4}
F(x(t))-Mt \leq K_2(N)-\frac{M}{4} \int_{T_3}^t \int_{s-T_2}^s M(s-u,s) f'(x(u))\,du, \quad t\geq T_3(N).
\end{equation}
Let $T_4(N)=T_2(N)+T_3(N)$ and $t\geq T_4(N)$. We estimate the second term on the righthand side of 
\eqref{eq.3.2.4} as in the proof of the lower bound of $J$ in Theorem~\ref{thm.2.3} after \eqref{eq.2.81}. 
Noting that $f'(x(u))>0$ for all $u\geq T_3-T_2$, for $t\geq T_4(N)$ we get
\begin{align*}
\lefteqn{
M\int_{T_3}^t \int_{s-T_2}^s M(s-u,s) f'(x(u))\,du\,ds}\\
&=M\int_{T_3-T_2}^{T_3} \int_{T_3\vee u}^{t\wedge(u+T_2)} M(s-u,s)\,ds f'(x(u))\,du
\\
&\qquad+M\int_{T_3}^{t-T_2} \int_{u}^{u+T_2} M(s-u,s)\,ds f'(x(u))\,du\\
&\qquad\qquad+M\int_{t-T_2}^t \int_{u}^{u+T_2} M(s-u,s)\,ds f'(x(u))\,du\\
&>M\int_{T_3}^{t-T_2} \int_{u}^{u+T_2} M(s-u,s)\,ds f'(x(u))\,du.
\end{align*}
For $u\in [T_3,t-T_2]$ we have as before that 
\[
\int_{u}^{u+T_2} M(s-u,s)\,ds \geq \int_{[0,T_2]} w\mu(dw) >N.
\]
Therefore from \eqref{eq.3.2.4} for $t\geq T_4(N)$ we have 
\begin{equation} \label{eq.3.2.5}
F(x(t))-Mt \leq K_2(N) - \frac{MN}{4} \int_{T_3}^{t-T_2} f'(x(u))\,du.
\end{equation}
Finally, for $t\geq T_4(N)$ we get
\begin{align*}
M\int_{T_3}^{t-T_2} f'(x(u))\,du  
&= \int_{T_3}^{t-T_2} \frac{f'(x(u))}{f(x(u))}\cdot \frac{Mf(x(u)))}{x'(u)} x'(u)\,du\\
&\geq \int_{T_3}^{t-T_2} \frac{f'(x(u))}{f(x(u))} x'(u)\,du\\
&=\log f(x(t-T_2)) - \log f(x(T_3))\\
&> \log\left(\frac{1}{2}\right) + \log f(x(t))  - \log f(x(T_3)).
\end{align*}
Since $f(x(t))\to\infty$ as $t\to\infty$, taking this estimate together with \eqref{eq.3.2.5} and letting 
$t\to\infty$, we get 
\[
\liminf_{t\to\infty} \frac{F(x(t))-Mt}{\log f(x(t))} \leq - \frac{N}{4}.
\] 
Since $N$ is arbitrary, we get 
\[
\lim_{t\to\infty} \frac{F(x(t))-Mt}{\log f(x(t))} =-\infty,
\]
and because $\log f(x)/\log x\to 1$ as $x\to\infty$, we have
\begin{equation*} 
\lim_{t\to\infty} \frac{F(x(t))-Mt}{\log x(t)} =-\infty.
\end{equation*}
Notice that the estimate $x'(t)\leq Mf(x(t))$ for $t\geq T_1^{III}$ holds, so asymptotic integration
yields 
\[
\limsup_{t\to\infty} \frac{x(t)}{F^{-1}(Mt)}\leq 1.
\]
Therefore all the hypotheses of Lemma~\ref{lemma.l8} hold, and therefore $x(t)/F^{-1}(Mt)\to 0$ as $t\to\infty$, 
as claimed.

\section{Proof of Theorems~\ref{thm.2.2.1.1},~\ref{thm.2.2.1}, and~\ref{thm.2.4}}  
The proofs of these results rely upon some preliminary lemmas. The first several results will be employed in the proof of 
Theorems~\ref{thm.2.2.1} and \ref{thm.2.2.1.1}, although Lemma~\ref{lemma.lemmaa1} is also needed for the proof of 
Theorem~\ref{thm.2.4}.

\begin{lemma} \label{lemma.fprdown}
Suppose that $f(x)>0$ for all $x>0$, $f'(x)>0$ for all $x> x_1$, $f'(x) \to 0$ as $x \to \infty$ and 
$f(x)\to\infty$ as $x\to\infty$. If $f'$ is decreasing on $[x_2,\infty)$, then 
\begin{align} \label{eq.fxdecalmost}
&\text{For every $\epsilon>0$ there is $x_0(\epsilon)>0$ such that $x>y\geq x_0(\epsilon)$ implies }\nonumber\\
& \hspace{100pt}\frac{f(x)}{x}< (1+\epsilon)\frac{f(y)}{y}.
\end{align}
\end{lemma}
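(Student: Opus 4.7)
The plan is to exploit the concavity of $f$ on $[x_2,\infty)$ (which follows from $f'$ decreasing there) via the tangent-line inequality, and to first derive the auxiliary estimate $\limsup_{y\to\infty} y f'(y)/f(y) \leq 1$. This estimate then makes the desired almost-monotonicity of $x\mapsto f(x)/x$ a short algebraic consequence of a second application of concavity.

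First I would observe that under the hypotheses, $f'>0$ on $[x_2,\infty)$ (shrinking $x_2$ if necessary so that $x_2\geq x_1$), and $f$ is concave there. Applying the tangent-line inequality at the point $y\geq x_2$ to the point $x_2$ yields $f(x_2)\leq f(y)+f'(y)(x_2-y)$, equivalently $f(y)\geq f(x_2)+f'(y)(y-x_2)\geq f'(y)(y-x_2)$, since $f(x_2)>0$. Consequently
\[
\frac{yf'(y)}{f(y)}\leq \frac{y}{y-x_2}\longrightarrow 1 \quad \text{as } y\to\infty,
\]
so for each $\epsilon>0$ there exists $x_0(\epsilon)\geq x_2$ such that $yf'(y)/f(y)\leq 1+\epsilon/2$ whenever $y\geq x_0(\epsilon)$.

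Next, for $x>y\geq x_0(\epsilon)$, I would again invoke concavity, this time at $y$ applied to $x$, giving $f(x)\leq f(y)+f'(y)(x-y)$. Dividing by $x$ and writing $r:=y/x\in(0,1)$, this rearranges to
\[
\frac{f(x)}{x}\leq \frac{f(y)}{y}\left(r+(1-r)\,\frac{yf'(y)}{f(y)}\right)\leq \frac{f(y)}{y}\bigl(1+(\epsilon/2)(1-r)\bigr)<(1+\epsilon)\frac{f(y)}{y},
\]
using the bound on $yf'(y)/f(y)$ at the middle step. This is \eqref{eq.fxdecalmost}.

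The only potential obstacle is the first step, namely extracting $\limsup yf'(y)/f(y)\leq 1$ from concavity alone; the rest amounts to a convex combination in the two variables $y/x$ and $yf'(y)/f(y)$. No subtleties with the hypotheses $f(x)\to\infty$ or $f'(x)\to 0$ are expected beyond their role in guaranteeing that $f$ is eventually concave and increasing.
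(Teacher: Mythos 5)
Your proof is correct, and it takes a somewhat different route from the paper's. The paper also exploits concavity of $f$ on $[\max(x_1,x_2),\infty)$, but it does so by showing that the secant slope $u\mapsto (f(u)-f(x_3))/(u-x_3)$ from a fixed base point $x_3$ is non-increasing, and then converting this into a statement about $f(x)/x$ via a correction factor $\alpha(x) = \bigl(\tfrac{f(x)-f(x_3)}{x-x_3}\bigr)\big/\bigl(\tfrac{f(x)}{x}\bigr)$, whose convergence to $1$ requires the hypothesis $f(x)\to\infty$. You instead route the argument through the elasticity bound $\limsup_{y\to\infty} y f'(y)/f(y)\leq 1$ (obtained from the tangent line at $y$ evaluated at $x_2$) and then apply the tangent-line inequality once more at $y$, writing the result as a convex combination in $r=y/x$; the algebra $r+(1-r)(1+\epsilon/2)=1+(\epsilon/2)(1-r)<1+\epsilon$ is exactly right. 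A small bonus of your version is that it never actually uses $f(x)\to\infty$ (only $f(x_2)>0$), so it is marginally more economical in hypotheses than the paper's argument; it also isolates the elasticity bound, which is a reusable fact in its own right. One trivial wording slip: you should \emph{enlarge} (not shrink) $x_2$ to ensure $x_2\geq x_1$, i.e.\ replace $x_2$ by $\max(x_1,x_2)$, so that $f'$ is both positive and decreasing on the interval where concavity is invoked.
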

\begin{proof}
Let $u>\max(x_2,x_1)=:x_3$. Since $f'$ is decreasing, we have
\[
f(u)-f(x_3)\geq f'(u)(u-x_3).
\] 
Rearranging and integrating over the interval $[y,x]$ (for $x>x_3$) yields
\[
\frac{f(x)-f(x_3)}{x-x_3}\leq \frac{f(y)-f(x_3)}{y-x_3}.
\]
Define 
\[
\alpha(x):= \left(\frac{f(x)-f(x_3)}{x-x_3}\right)\bigg/\left(\frac{f(x)}{x}\right), \quad x>x_3.
\]
Then 
\[
\frac{f(x)}{x}\leq \frac{\alpha(y)}{\alpha(x)}\cdot \frac{f(y)}{y}, \quad x>y>x_3.
\]
Since $f(x)\to\infty$ as $x\to\infty$, it follows that $\alpha(x)\to 1$ as $x\to\infty$. Therefore, for every $\epsilon>0$ there is $x_4(\epsilon)>0$ such that 
\[
\frac{1}{\sqrt{1+\epsilon}}<\alpha(x)<\sqrt{1+\epsilon}, \quad x>x_4(\epsilon).
\]
Now, set $x_0(\epsilon)=\max(x_3+1,x_4(\epsilon))$. Then for $x>y\geq x_0(\epsilon)$ we have \eqref{eq.fxdecalmost} 
as claimed. 
\end{proof}

The following result, which was established in \cite{applebypatterson2016growth} for increasing, concave functions, will also be used. Scrutiny of the proof in \cite{applebypatterson2016growth} shows that the monotonicity restrictions can be relaxed to the ultimate monotonicity hypotheses imposed here.

\begin{lemma}\label{asym_equiv_copy_ap}
Suppose $\varphi$ is such that $\varphi(x)\to\infty$ as $x\to\infty$, $\varphi'(x)>0$ for $x>x_1$ and $\varphi'(x)$ is decreasing on $[x_2,\infty)$ with $\varphi'(x)\to 0$ as $x\to\infty$. If $b,c \in C(\mathbb{R^+},\mathbb{R^+})$ obey 
$\lim_{t\to\infty}b(t)=\lim_{t\to\infty}c(t) = \infty$, and $b(t) \sim c(t)$ as $t\to\infty$, then $\varphi(b(t)) \sim \varphi(c(t))$ as $t\to\infty$.
\end{lemma}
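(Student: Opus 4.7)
The plan is to convert the claim into control of the logarithmic difference $\log\varphi(b(t))-\log\varphi(c(t))$ via
\[
\log\varphi(b(t))-\log\varphi(c(t))=\int_{c(t)}^{b(t)} \frac{\varphi'(u)}{\varphi(u)}\,du,
\]
(valid for $t$ large enough that $b(t),c(t)>x_1$, so that $\varphi$ is positive and strictly increasing on a neighbourhood of the integration interval). If this integral tends to zero then $\varphi(b(t))/\varphi(c(t))\to 1$, which is exactly the conclusion. The whole strategy therefore reduces to finding an upper bound on the logarithmic derivative $u\mapsto u\varphi'(u)/\varphi(u)$ that is uniform for large $u$, after which the change of variable $du/u$ converts $b(t)\sim c(t)$ directly into smallness of the integral.

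The crux is the boundedness of $u\varphi'(u)/\varphi(u)$. I would obtain this from the ultimate concavity of $\varphi$: since $\varphi'$ is decreasing on $[x_2,\infty)$, for $x>2x_2$ we have $\varphi'(u)\geq \varphi'(x)$ for $u\in[x/2,x]$, hence
\[
\varphi(x)-\varphi(x/2)=\int_{x/2}^{x} \varphi'(u)\,du \geq \frac{x}{2}\,\varphi'(x).
\]
Because $\varphi(x)\to\infty$, $\varphi$ is eventually positive, so $\varphi(x)-\varphi(x/2)\leq \varphi(x)$ for large $x$, giving $x\varphi'(x)/\varphi(x)\leq 2$ for all sufficiently large $x$. (There is a mild book-keeping point in that one needs $x/2\geq x_2$ and $\varphi(x/2)\geq 0$, both of which hold once $x$ is large.)

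With this bound in hand, I would conclude as follows. Take $T$ large enough that both $b(t)$ and $c(t)$ exceed the threshold beyond which $u\varphi'(u)/\varphi(u)\leq 2$ and $\varphi$ is strictly increasing. For $t\geq T$, without loss of generality assume $b(t)\geq c(t)$ (the opposite case is symmetric); then
\[
0\leq \log\varphi(b(t))-\log\varphi(c(t)) = \int_{c(t)}^{b(t)} \frac{u\varphi'(u)}{\varphi(u)}\cdot\frac{du}{u} \leq 2\log\!\left(\frac{b(t)}{c(t)}\right).
\]
Since $b(t)/c(t)\to 1$ by hypothesis, the right-hand side tends to $0$, so $\log\varphi(b(t))-\log\varphi(c(t))\to 0$ and therefore $\varphi(b(t))\sim \varphi(c(t))$ as $t\to\infty$.

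The step I expect to be the main obstacle is the boundedness of the logarithmic derivative, as naively one only has $\varphi'(x)\to 0$ and $\varphi(x)\to\infty$, which is not enough; the concavity trick above extracts a quantitative estimate from the purely qualitative monotonicity hypothesis on $\varphi'$. Once this is secured, the rest of the argument is immediate and does not require any additional hypothesis beyond $b(t)\sim c(t)$.
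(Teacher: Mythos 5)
Your proof is correct. Note that the paper does not actually supply an argument for this lemma: it is stated with a pointer to \cite{applebypatterson2016growth}, together with the remark that the concavity/monotonicity hypotheses there can be weakened to the ultimate versions assumed here, so there is no in-text proof to compare against. Your self-contained argument is sound: the identity $\log\varphi(b(t))-\log\varphi(c(t))=\int_{c(t)}^{b(t)}\varphi'(u)/\varphi(u)\,du$ is legitimate once $b(t),c(t)$ are large enough that $\varphi$ is positive and increasing there, and the key estimate $x\varphi'(x)/\varphi(x)\leq 2$ for large $x$ follows exactly as you say from $\varphi(x)-\varphi(x/2)\geq \tfrac{x}{2}\varphi'(x)$ (ultimate monotonicity of $\varphi'$) together with the eventual positivity of $\varphi(x/2)$. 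The final step $\lvert\log\varphi(b(t))-\log\varphi(c(t))\rvert\leq 2\,\lvert\log(b(t)/c(t))\rvert\to 0$, handled pointwise so that no global ordering of $b$ and $c$ is needed, closes the argument. This is precisely the kind of "relaxation to ultimate monotonicity" the authors allude to, and your identification of the bounded logarithmic derivative as the crux is exactly right; the hypothesis $\varphi'(x)\to 0$ is in fact not even needed for this lemma.
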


\begin{lemma}\label{lemma.lemmaa1}
Let $M>0$. Suppose that $f(x)>0$ for all $x>0$, $f'(x)>0$ for all $x > x_1$, $f'(x) \to 0$ as $x \to \infty$. Define $F$ as in \eqref{def.F}. Suppose that $a$ is a measurable function such that $a(t)>0$ for all $t\geq T^\ast$. 
\begin{itemize}
\item[(a)]If 
\begin{equation} \label{eq.asmall}
\lim_{x\to\infty} \frac{f(x)}{x}a\left(F(x)/M\right)=0,
\end{equation}
then 
\begin{equation} \label{eq.Fmin1aFmin1}
\lim_{t\to\infty} \frac{F^{-1}(Mt-a(t))}{F^{-1}(Mt)}=1.
\end{equation}
\item[(b)]
If $f'$ is decreasing on $[x_2,\infty)$, and $f(x)\to\infty$ as $x\to\infty$, then \eqref{eq.Fmin1aFmin1} implies \eqref{eq.asmall}.
\end{itemize}
\end{lemma}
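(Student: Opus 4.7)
The plan is to recast the statement in terms of the two quantities $y(t):=F^{-1}(Mt)$ and $z(t):=F^{-1}(Mt-a(t))$. Since $a(t)>0$ (for $t\geq T^*$) and $F$ is increasing, we have $z(t)<y(t)$, and by the definition of $F$,
\[
a(t)=F(y(t))-F(z(t))=\int_{z(t)}^{y(t)}\frac{du}{f(u)}.
\]
The conclusion \eqref{eq.Fmin1aFmin1} is just $z(t)/y(t)\to 1$, while the hypothesis \eqref{eq.asmall} substituted at $x=y(t)$ (note that $F(y(t))/M=t$) becomes $f(y(t))\,a(t)/y(t)\to 0$ as $t\to\infty$. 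Since $y(t)\to\infty$, for $t$ large $y(t)>x_1$, so $f$ is increasing and $1/f$ decreasing on an interval to the left of $y(t)$; once $z(t)/y(t)$ is close to $1$ we also have $z(t)>x_1$, and a short bootstrap will justify treating $f$ as monotone on $[z(t),y(t)]$ throughout.

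For part (a), the lower estimate on the integral using $1/f(u)\geq 1/f(y(t))$ on $[z(t),y(t)]$ gives
\[
a(t)\geq \frac{y(t)-z(t)}{f(y(t))},\qquad \text{i.e.,}\qquad 0\leq 1-\frac{z(t)}{y(t)}\leq \frac{f(y(t))}{y(t)}\,a(t).
\]
Since the right-hand side is exactly $\frac{f(y(t))}{y(t)}a\!\left(F(y(t))/M\right)$ and $y(t)\to\infty$, the hypothesis \eqref{eq.asmall} forces it to $0$, yielding $z(t)/y(t)\to 1$ as required. This also confirms a posteriori that $z(t)\to\infty$ and sits in the monotonicity range of $f$ for $t$ large.

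For part (b) the plan is symmetric, using the upper estimate $1/f(u)\leq 1/f(z(t))$ on $[z(t),y(t)]$, which is valid once $z(t)>x_2$:
\[
a(t)\leq \frac{y(t)-z(t)}{f(z(t))},\qquad \text{so}\qquad \frac{f(y(t))}{y(t)}\,a(t)\leq \frac{f(y(t))}{f(z(t))}\left(1-\frac{z(t)}{y(t)}\right).
\]
The parenthesised factor tends to $0$ by hypothesis \eqref{eq.Fmin1aFmin1}, and it remains to show $f(y(t))/f(z(t))$ stays bounded. Here Lemma~\ref{lemma.fprdown} enters: given $\epsilon>0$, for $t$ large $y(t)>z(t)\geq x_0(\epsilon)$, so $f(y(t))/y(t)<(1+\epsilon)f(z(t))/z(t)$, which rearranges to $f(y(t))/f(z(t))<(1+\epsilon)\,y(t)/z(t)$. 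Since $y(t)/z(t)\to 1$, the ratio is bounded (indeed tends to at most $1+\epsilon$, hence to $1$ as $\epsilon\downarrow 0$), and the right-hand side tends to $0$. Reading this back through $x=y(t)$ gives \eqref{eq.asmall}.

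The routine work is entirely in the two one-line integral comparisons; the only non-trivial input is Lemma~\ref{lemma.fprdown} to handle the ratio $f(y(t))/f(z(t))$ in part (b), and the main obstacle is really just the bookkeeping needed to confirm that $z(t)$ eventually enters the interval $[x_0(\epsilon),\infty)$ on which the ``almost decreasing'' behaviour of $f(x)/x$ from Lemma~\ref{lemma.fprdown} is available. This is where the extra hypothesis ``$f'$ decreasing, $f\to\infty$'' in part (b) is used, and where part (a) (which needs only $f$ ultimately increasing) is substantially cheaper.
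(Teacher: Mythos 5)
Your proof is correct and follows essentially the same route as the paper: the two one-sided bounds on $a(t)=\int_{z(t)}^{y(t)}du/f(u)$ are exactly the inequalities the paper derives via the mean value theorem applied to $y(t)=F^{-1}(Mt)$, and part (b) uses Lemma~\ref{lemma.fprdown} in the same way to pass from $f(z(t))/z(t)$ to $f(y(t))/y(t)$. The only step you defer to a ``short bootstrap'' is made explicit in the paper: from $\liminf_{x\to\infty}F(x)f(x)/x\geq 1$ and \eqref{eq.asmall} one deduces $a(t)/t\to 0$, which guarantees that $F^{-1}(Mt-a(t))$ is well defined and eventually lies in the monotonicity range of $f$.
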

\begin{proof}
We start by proving that \eqref{eq.asmall} implies \eqref{eq.Fmin1aFmin1}.
Since $f$ is increasing, for $x\geq x_1$ we have 
\[
F(x)-F(x_1)=\int_{x_1}^x \frac{1}{f(u)} \geq  \frac{x-x_1}{f(x)}.
\]
Thus 
\[
\liminf_{x\to\infty} \frac{F(x)f(x)}{x} \geq 1.
\]
Thus for every $\epsilon\in (0,1)$ there is $x_0(\epsilon)>0$ such that 
$f(x)/x>(1-\epsilon)/F(x)$ for $x\geq x_0(\epsilon)$. Now, since $F(x)\to\infty$ as $x\to\infty$, we have that $F(x)/M>T^\ast$ for all $x>x_2$. Let $x_3(\epsilon)=\max(x_0(\epsilon),x_2)$. 
Therefore for $x\geq x_3(\epsilon)$ we have
\[
\frac{f(x)}{x}a\left(F(x)/M\right)>(1-\epsilon) \frac{a\left(F(x)/M\right)}{F(x)}.
\]
By \eqref{eq.asmall} we therefore have 
\[
\lim_{x\to\infty} \frac{a\left(F(x)/M\right)}{F(x)/M}=0,
\]
and so
\[
\lim_{t\to\infty} \frac{a(t)}{t}=0.
\]
Therefore there exists $T_2>0$ such that $a(t)>0$ and $Mt-a(t)>0$ for all $t\geq T_2$. Also, since 
$Mt-a(t)\to\infty$ as $t\to\infty$, there is $T_3>0$ such that $F^{-1}(Mt-a(t))>x_1$ for all $t\geq T_3$. 
Let $T_4=\max(T_2,T_3)$.

Let $y$ be the solution of \eqref{eq.equivode} with $y(0)=1$. Then $y(t)=F^{-1}(Mt)$ for $t\geq 0$. 
Hence for $t\geq T_4$, by the mean value theorem there exists $\theta_t\in [0,1]$ such that  
\begin{align*}
F^{-1}(Mt-a(t))&=y\left(t-\frac{a(t)}{M}\right)
=y(t)+y'\left(t-\frac{\theta_t a(t)}{M}\right)\cdot \frac{-a(t)}{M} \\
&=F^{-1}(Mt)-f\left(y\left(t-\frac{\theta_t a(t)}{M}\right)\right)\cdot  a(t).
\end{align*}
Next, since $a(t)>0$ for $t\geq T_4$ and $\theta_t\in [0,1]$, we have that 
$t\geq t-\theta_t a(t)/M\geq t-a(t)/M>0$ for all $t\geq T_4$. Since $y$ is increasing, we have 
$y(t)\geq y(t-\theta_t a(t)/M)\geq y(t-a(t)/M)=F^{-1}(Mt-a(t))>x_1$ for $t\geq T_4$. Therefore we have 
\[
f\left(y\left(t-\frac{a(t)}{M}\right)\right)\leq 
f\left(y\left(t-\frac{\theta_t a(t)}{M}\right)\right)\leq f(y(t))=f(F^{-1}(Mt)).
\] 
Therefore 
\begin{align*}
F^{-1}(Mt)>F^{-1}(Mt-a(t))&\geq F^{-1}(Mt)-f(F^{-1}(Mt))a(t), \quad t\geq T_4, \\
F^{-1}(Mt-a(t))&\leq  F^{-1}(Mt)-f(F^{-1}(Mt-a(t)))a(t), \quad t\geq T_4.
\end{align*}
To finish the proof of part (a), we divide by $F^{-1}(Mt)$ across the first inequality, let $t\to\infty$ and apply \eqref{eq.asmall}. 

To prove part (b), divide the second inequality by $F^{-1}(Mt-a(t))$ and rearrange to get
\[
\frac{F^{-1}(Mt)}{F^{-1}(Mt-a(t))}-1\geq \frac{f(F^{-1}(Mt-a(t)))}{F^{-1}(Mt-a(t))}a(t)>0, \quad t\geq T_4.
\]
Letting $t\to\infty$ and using \eqref{eq.Fmin1aFmin1} we see that 
\[
\lim_{t\to\infty} \frac{f(F^{-1}(Mt-a(t)))}{F^{-1}(Mt-a(t))}a(t)=0.
\]
By Lemma~\ref{lemma.fprdown}, we have that \eqref{eq.fxdecalmost} holds. Since $F^{-1}(Mt)\sim F^{-1}(Mt-a(t))$ 
and $F^{-1}(Mt)\to\infty$ as $t\to\infty$, for every $\epsilon>0$ there is $T_1(\epsilon)>0$ such that 
$F^{-1}(Mt-a(t))>x_0(\epsilon)$ for all $t\geq T_1(\epsilon)$. Since $a(t)>0$ for all $t>T^\ast$, for $t>\max(T^\ast,T_1(\epsilon))$, we have 
\[
0<
\frac{f(F^{-1}(Mt))}{F^{-1}(Mt)}< (1+\epsilon)\frac{f(F^{-1}(Mt-a(t)))}{F^{-1}(Mt-a(t))}.
\]
Hence 
\[
\lim_{t\to\infty} \frac{f(F^{-1}(Mt))}{F^{-1}(Mt)}a(t)=0.
\]
Making the substitution $u=F^{-1}(Mt)$ gives \eqref{eq.asmall}, completing the proof of part (b).
\end{proof}
We are now in a position to prove Theorem~\ref{thm.2.2.1}.
\begin{proof}[Proof of Theorem~\ref{thm.2.2.1}]
The proof that (a) implies (b) is the subject of Theorem~\ref{thm.2.2} when $\lambda=0$. We now prove that 
(b) implies (a), with the additional hypothesis that $f'$ is decreasing on $[x_2,\infty)$. 
Without assuming the rate of growth of $f$
(i.e., absent the hypothesis that $f$ obeys \eqref{eq.lambda}), we can proceed as in the proof of Theorem~\ref{thm.2.2}
to show that 
\[
\lim_{t\to\infty} \frac{F(x(t))-Mt}{\log f(x(t))}=-C.
\]
Since $x(t)\to\infty$ as $t\to\infty$, there is $T'>0$ such that the functions  
\[
C(t):=-\frac{F(x(t))-Mt}{\log f(x(t))}, \quad a(t):=C(t)\log f(x(t)), \quad t>T',
\]
are well--defined. Moreover, granted the usual tacit assumption that $f(x)\to\infty$ as $x\to\infty$, 
we have that there is $T''>0$ such that $a(t)>0$ and $C(t)>0$ for all $t>T''$, and $C(t)\to C$ as $t\to\infty$.
By the definition of $C$ and $a$, we get
\[
x(t)=F^{-1}(Mt-a(t)), \quad t>T'.
\]  
Therefore, by part (b) of Lemma~\ref{lemma.lemmaa1}, since $x(t)\sim F^{-1}(Mt)$ by hypothesis, we have that 
\begin{equation*} 
\lim_{x\to\infty} \frac{f(x)}{x}a\left(F(x)/M\right)=0.
\end{equation*}
Now, since $x(t)\sim F^{-1}(Mt)$ as $t\to\infty$, and $f$ is ultimately increasing with ultimately decreasing derivative, 
and $f(x)\to\infty$ as $x\to\infty$, we may put $f$ in the role of $\varphi$ in Lemma~\ref{asym_equiv_copy_ap}, $x$ in the role of $b$ and $t\mapsto F^{-1}(Mt)$ in the role of $c$ to get 
\[
f(x(t))\sim f(F^{-1}(Mt)) \text{ as $t\to\infty$}. 
\]
Therefore $\log f(x(t))\sim \log f(F^{-1}(Mt))$ as $t\to\infty$ (by elementary considerations, or by identifying $\varphi=\log$ in Lemma~\ref{asym_equiv_copy_ap}, for example). Hence
\[
a(t)\sim C\log  f(F^{-1}(Mt)) \text{ as $t\to\infty$}.
\]
Since $F(x)/M\to\infty$ as $x\to\infty$, we have 
\[
a(F(x)/M) \sim C\log f(x), \quad \text{ as $x\to\infty$}.
\]
Therefore $f(x)/x \cdot \log f(x)\to 0$ as $x\to\infty$. Finally, by using the identity 
\[
\frac{f(x)}{x}\log x = -\frac{f(x)}{x} \log \left(\frac{f(x)}{x}\right) + \frac{f(x)}{x} \log f(x),
\]
(which holds for all $x$ sufficiently large) and noting that $y\log y\to 0$ as $y\to 0^+$, and $f(x)/x\to 0$ as $x\to\infty$, we see 
that $f(x)/x \cdot \log x\to 0$ as $x\to\infty$, as required.
\end{proof}

We are also in a position to prove Theorem~\ref{thm.2.2.1.1}.
%
\begin{proof}[Proof of Theorem~\ref{thm.2.2.1.1}]
Define $\varphi(x)=\log f(F^{-1}(x))$. Since $f$ is ultimately increasing and $F^{-1}$ is increasing, $\varphi$ is ultimately increasing and $\varphi(x)\to\infty$ 
as $x\to\infty$. Now $\varphi'(x)=f'(F^{-1}(x))$. Therefore, as $f'$ is ultimately decreasing, 
 $\varphi'$ is ultimately decreasing with $\varphi'(x)\downarrow 0$ as $x\to\infty$. 
As part of the proof of Theorem~\ref{thm.2.2} it was shown that the solution $x$ of \eqref{eq.fde} obeys 
$F(x(t))/t\to M$ as $t\to\infty$.
Now we apply Lemma~\ref{asym_equiv_copy_ap} with 
$b(t)=F(x(t))$, $c(t)=Mt$ and $\varphi$ as defined to get 
\[
\lim_{t\to\infty} \frac{\log f(x(t))}{\log f(F^{-1}(Mt))}=1.
\]
In the proof of Theorem~\ref{thm.2.2} it was shown that the limit
\[
\lim_{t\to\infty} \frac{F(x(t))-Mt}{\log f(x(t))}=-C
\]
holds. Furthermore, as $f(x)/(x/\log x)\to\infty$ and $f(x)/x\to 0$ as $x\to\infty$, we have that $\log f(x)/\log x\to 1$ 
as $x\to\infty$, so taking these limits together, we arrive at 
\[
\lim_{t\to\infty} -\frac{F(x(t))-Mt}{\log F^{-1}(Mt)}=C.
\] 
Finally the function $c:[1,\infty) \to\mathbb{R}$ given by   
\[
c(t):=-\frac{F(x(t))-Mt}{\log F^{-1}(Mt)}, \quad t\geq 1
\]
is well--defined, in $C^1$, and obeys $c(t)\to C$ as $t\to\infty$. Rearranging this identity in terms of $x$  yields 
the result. 
\end{proof}

In addition to Lemma~\ref{lemma.lemmaa1}, we will need one more preparatory result in order to 
prove Theorem~\ref{thm.2.4}: we state and prove it now. 
\begin{lemma} \label{lemma.la2}
Let $M>0$. Suppose that $f(x)>0$ for all $x>0$, $f'(x)>0$ for all $x > x_1$, $f'(x) \to 0$ as $x \to \infty$. Define $F$ as in \eqref{def.F}. Suppose that $\epsilon$ is a positive, non--decreasing and measurable function with $\epsilon(t)\to 0$ as $t\to\infty$. Then
\begin{equation} \label{eq.epsbig}
\lim_{x\to\infty} \frac{f(x)}{x}\int_{0}^{F(x)/M} \epsilon(s)\,ds=+\infty
\end{equation}
implies
\begin{equation*}
\lim_{t\to\infty} \frac{F^{-1}(Mt-\int_0^t \epsilon(s)\,ds)}{F^{-1}(Mt)}=0.
\end{equation*}
\end{lemma}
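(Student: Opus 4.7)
The plan is to mimic the mean value theorem argument underlying Lemma~\ref{lemma.lemmaa1}(a): reduce to the ODE solution $y(t):=F^{-1}(Mt)$, which satisfies $y'(t)=Mf(y(t))$ by construction, and quantify the gap between $y(t)$ and $z(t):=F^{-1}(Mt-a(t))=y(t-a(t)/M)$, where $a(t):=\int_0^t\epsilon(s)\,ds$. The hypothesis will be used in exactly one place, to force this gap (measured multiplicatively) to become unbounded.

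First I would dispose of preliminaries. Positivity of $\epsilon$ makes $a$ strictly increasing with $a(t)>0$ for $t>0$; the condition $\epsilon(t)\to 0$ forces $a(t)/t\to 0$ by a routine split-the-integral estimate, so $Mt-a(t)\to\infty$ and $z(t)$ is well-defined with $z(t)\to\infty$ as $t\to\infty$. In particular, for $t$ large enough, $z(t)>x_1$ and $f$ is increasing on the range of values we encounter.

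Next, I would apply the mean value theorem to $y$ on $[t-a(t)/M,t]$: there is $\xi_t$ in this interval with
\[
y(t)-z(t) = y'(\xi_t)\cdot\frac{a(t)}{M} = f(y(\xi_t))\cdot a(t)\geq f(z(t))\cdot a(t),
\]
where the last inequality uses monotonicity of $y$ together with the fact that $f$ is increasing on $[x_1,\infty)$. Dividing by $z(t)$ gives the key lower bound
\[
\frac{y(t)}{z(t)} \geq 1 + \frac{f(z(t))}{z(t)}\,a(t).
\]

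To conclude, it suffices to check that $(f(z(t))/z(t))\,a(t)\to +\infty$. Substituting $x=z(t)$ in the hypothesis (legitimate since $z(t)\to\infty$) yields $(f(z(t))/z(t))\cdot a(F(z(t))/M)\to+\infty$. But $F(z(t))/M = t-a(t)/M\leq t$, so by monotonicity of $a$ we have $a(F(z(t))/M)\leq a(t)$, and combining these two facts delivers the desired blow-up of $(f(z(t))/z(t))\,a(t)$. Therefore $y(t)/z(t)\to\infty$, i.e., $z(t)/y(t)\to 0$, which is the conclusion. Note that the argument uses only positivity of $\epsilon$ (to get monotonicity of $a$) and $\epsilon(t)\to 0$ (to ensure $a(t)/t\to 0$); the monotonicity hypothesis on $\epsilon$ itself plays no role. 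I do not expect a significant obstacle; the one slightly subtle point is that the hypothesis is phrased with $a(F(x)/M)$ rather than $a(t)$, and the crucial observation is that this mismatch is in the favourable direction because $F(z(t))/M\leq t$ and $a$ is non-decreasing.
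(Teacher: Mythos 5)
Your proof is correct, and its core is the same as the paper's: pass to $y(t)=F^{-1}(Mt)$, apply the mean value theorem on $[t-a(t)/M,\,t]$ together with the monotonicity of $f$ on $[x_1,\infty)$ to get $y(t)/z(t)\geq 1+(f(z(t))/z(t))\,a(t)$, and then show the right-hand side blows up. Where you genuinely diverge from the paper is in the step that converts the hypothesis \eqref{eq.epsbig}, which is phrased at the argument $F(x)/M$, into blow-up of $(f(z(t))/z(t))\,a(t)$. The paper introduces $\kappa(t)=t-K(t)$ with $K=a/M$, proves $\kappa$ is ultimately increasing, and establishes $K(\kappa^{-1}(t))/K(t)\to 1$ — and it is precisely there that it invokes the (mis-stated as ``non--decreasing'', clearly intended as non--increasing) monotonicity of $\epsilon$. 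Your observation that $F(z(t))/M=t-a(t)/M\leq t$, so that $a(F(z(t))/M)\leq a(t)$ by mere monotonicity of $a$, replaces all of that machinery with one line: only a lower bound on $a(t)$ is needed, not an asymptotic equivalence. This both shortens the argument and shows the monotonicity hypothesis on $\epsilon$ is superfluous (only positivity and $\epsilon(t)\to 0$ are used), which is a mild but real strengthening of the lemma; the paper's two-sided estimate $K(\kappa^{-1})/K\to 1$ buys nothing extra here since the relevant limit is $+\infty$.
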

\begin{proof}
Let $y$ be the solution of \eqref{eq.equivode} with $y(0)=1$. Then $y(t)=F^{-1}(Mt)$ for $t\geq 0$.
Define 
\[
K(t):=\frac{1}{M}\int_0^t \epsilon(s)\,ds, \quad \kappa(t)=t-K(t).
\]
Then $K$ is non--decreasing. Also as $\epsilon(t)\to 0$ as $t\to\infty$, we have $K(t)/t\to 0$ 
as $t\to\infty$. Hence $\kappa(t)\to\infty$ as $t\to\infty$ and indeed $\kappa(t)/t\to 1$ as $t\to\infty$. 
Since $\epsilon(t)\to 0$ as $t\to\infty$, it follows that $0\leq \epsilon(t)<M/8$ for all $t\geq T_1$. Thus 
for $t>s\geq T_1$, we have 
\[
\kappa(t)-\kappa(s) = t-s -\frac{1}{M}\int_s^t \epsilon(u)\,ds \geq  \frac{7}{8}(t-s).
\]   
Hence $\kappa$ is increasing on $[T_1,\infty)$, so $\kappa^{-1}$ is well--defined and $\kappa^{-1}(t)/t\to 1$ 
as $t\to\infty$. Also, as $\kappa(t)<t$ for all $t$ sufficiently large we have $\kappa^{-1}(t)>t$ for 
all $t$ sufficiently large (say $t\geq T_2$). Thus for $t\geq T_2$, as $\epsilon$ is non--increasing, we have 
\[
0\leq K(\kappa^{-1}(t))-K(t)=\int_t^{\kappa^{-1}(t)} \epsilon(s)\,ds \leq \epsilon(t)(\kappa^{-1}(t)-t).
\]
By the definition of $\kappa$, there is $T_3>0$ such that $t=\kappa^{-1}(t)-K(\kappa^{-1}(t))$ for $t\geq T_3$. Also, there is $T_4>0$ such that $\epsilon(t)<1$ for all $t\geq T_4$. Thus for $t\geq T_5=\max(T_2,T_3,T_4)$ we have 
\[
0\leq K(\kappa^{-1}(t))-K(t) \leq \epsilon(t)K(\kappa^{-1}(t)).
\]   
and indeed 
\[
1\leq \frac{K(\kappa^{-1}(t))}{K(t)}\leq \frac{1}{1-\epsilon(t)}, \quad t\geq T_5.
\]
Therefore 
\begin{equation} \label{eq.Kasykappa}
\lim_{t\to\infty} \frac{K(\kappa^{-1}(t))}{K(t)}=1.
\end{equation}

Since $\kappa(t)\to\infty$ as $t\to\infty$, and $\kappa$ is increasing, we have 
\begin{align*}
\lim_{t\to\infty} \frac{f(y(t-K(t)))}{y(t-K(t))}K(t)
&=\lim_{t\to\infty} \frac{f(y(\kappa(t)))}{y(\kappa(t))}K(t)=\lim_{z\to\infty} \frac{f(y(z))}{y(z)}K(\kappa^{-1}(z))\\
&=\lim_{z\to\infty} \frac{f(y(z))}{y(z)}K(z) \cdot \frac{K(\kappa^{-1}(z))}{K(z)}\\
&=\lim_{z\to\infty} \frac{f(F^{-1}(Mz))}{F^{-1}(Mz)}K(z) \cdot \frac{K(\kappa^{-1}(z))}{K(z)}\\
&=+\infty
\end{align*}
where we have used  \eqref{eq.epsbig} and \eqref{eq.Kasykappa} at the last step. 
Hence 
\begin{equation} \label{eq.fyasy}
\lim_{t\to\infty} \frac{f(y(t-K(t)))}{y(t-K(t))}K(t)=+\infty.
\end{equation}

By hypothesis, there is $T_6>0$ such that $t-K(t)>0$ for all $t\geq T_6$ and also that 
$F^{-1}(t-K(t))>x_1$ for all $t\geq T_7$. Let $T_8=\max(T_6,T_7)$. Then 
Next, for $t\geq T_8$ we have 
\[
F^{-1}\left(Mt-\int_0^t \epsilon(s)\,ds\right)=y(t-K(t)),
\]
so by the mean value theorem, there is $\theta_t\in [0,1]$ such that 
\[
y(t-K(t))=y(t)-y'(t-\theta_t K(t))K(t)=y(t)-Mf(y(t-\theta_t K(t)))K(t).
\]
Since $K(t)\geq 0$, $t\geq T_8$, and $\theta_t\in [0,1]$, $t-\theta_t K(t)\geq t-K(t)>0$. 
Since $y$ is increasing and $t\geq T_8$, $y(t-\theta_t K(t))\geq y(t-K(t))=F^{-1}(t-K(t))>x_1$. 
Therefore, as $f$ is increasing on $[x_1,\infty)$, we have 
\[
f(y(t-\theta_t K(t))) \geq f(y(t-K(t))).
\]
Hence for $t\geq T_8$
\[
y(t-K(t))=y(t)-Mf(y(t-\theta_t K(t)))K(t)\leq y(t)-Mf(y(t-K(t)))K(t).
\]
Therefore
\begin{equation*} 
y(t-K(t))+Mf(y(t-K(t)))K(t)\leq y(t), \quad t\geq T_8,
\end{equation*}
and so 
\[
\frac{y(t)}{y(t-K(t))}\geq 1 + M\frac{f(y(t-K(t)))}{y(t-K(t))}\cdot K(t), \quad t\geq T_8.
\]
Hence by \eqref{eq.fyasy} we see that 
\begin{equation} \label{eq.ykyt}
\lim_{t\to\infty} \frac{y(t)}{y(t-K(t))} = +\infty. 
\end{equation}
Finally,  since $F^{-1}\left(Mt-\int_0^t \epsilon(s)\,ds\right)=y(t-K(t))$, we see from \eqref{eq.ykyt} 
that 
\[
\lim_{t\to\infty} \frac{F^{-1}\left(Mt-\int_0^t \epsilon(s)\,ds\right)}{F^{-1}(Mt)}
=\lim_{t\to\infty} \frac{y(t-K(t))}{y(t)}
=0,
\]
completing the proof.
\end{proof}
We are now in a position to prove Theorem~\ref{thm.2.4}. 
\begin{proof}[Proof of Theorem~\ref{thm.2.4}]
As before we have defined  $\epsilon_1(t)=\int_{(t,\infty)}\mu(ds)$ for $t\geq 0$ and 
\[
\delta_1(t)=\epsilon_1(t)f(x(t)), \quad t\geq 0.
\]
Clearly $\delta_1(t)>0$ for all $t\geq 0$. Define also $\delta_2$ by 
\[
\delta_2(t)=\int_{[0,t]} \mu(ds)\left(f(x(t))-f(x(t-s))\right), \quad t\geq 0.
\]
We get 
\[
\delta_2(t)=\int_{0}^t M(t-u,t) f'(x(u))x'(u)\,du.
\]
Then as $f$ is increasing on $[0,\infty)$, we have that $\delta_2(t)>0$ for all $t> 0$ 
and hence
\begin{equation}\label{eq.xrep0}
x'(t)=Mf(x(t))-\delta_1(t)-\delta_2(t), \quad t\geq 0.
\end{equation}
Next if we define  
\[
\tilde{I}_2(t)=\frac{\delta_2(t)}{Mf(x(t))}, \quad t\geq 0,
\]
integration of \eqref{eq.xrep0} yields
\begin{equation} \label{eq.xrep1} 
F(x(t))-Mt=F(x(0))- \int_{0}^t \epsilon_1(s)\,ds -\int_0^t M\tilde{I}_2(s)\,ds, \quad t\geq 0.
\end{equation}

We now prove part (i) of the Theorem. To start with, we get an upper estimate for $x$.
Since $x'(t)\leq Mf(x(t))$ for all $t\geq 0$, we have 
\[
\tilde{I}_2(t)\leq \int_{0}^t M(t-u,t)f'(x(u))\,du.
\]
Hence 
\begin{align*}
\int_0^t M\tilde{I}_2(s)\,ds 
&\leq M\int_0^t \int_{0}^s M(s-u,s)f'(x(u))\,du \,ds \\
&= \int_0^t \int_u^t M(s-u,s)\,ds f'(x(u))\,du.
\end{align*}
Now for $t\geq u$ we have
\begin{align*}
\int_u^t M(s-u,s)\,ds 
&= \int_u^t \int_{[s-u,s]} \mu(dv) \,ds
=\int_0^{t-u} \int_{[w,w+u]} \mu(dv) \,dw 
=M_1(u,t), 
\end{align*}
by the definition of $M_1$ in \eqref{def.M1}. 
Now, from \eqref{eq.estM1}, we have 
\[
M_1(u,t)\leq  \int_{[0,t]} v  \mu(dv), \quad t\geq u.
\]
Therefore
\[
\int_u^t M(s-u,s)\,ds  \leq \int_{[0,t]} v  \mu(dv), \quad t\geq u.
\]
Therefore
\begin{align*}
\int_0^t M\tilde{I}_2(s)\,ds 
\leq M\int_0^t \int_{0}^s M(s-u,s)f'(x(u))\,du \,ds = \int_{[0,t]} v  \mu(dv) \int_0^t  Mf'(x(u))\,du.
\end{align*}
Hence we have from \eqref{eq.xrep1} that
\[
F(x(t))-Mt\geq F(x(0))- \int_{0}^t \epsilon_1(s)\,ds - \int_{[0,t]} v  \mu(dv) \int_0^t  Mf'(x(u))\,du, \quad t\geq 0.
\]
Next, we have that $x'(t)>M(1-\epsilon)f(x(t))$ for all $t\geq T_1(\epsilon)$. Define 
\[
K_1(\epsilon):=\int_0^{T_1(\epsilon)} Mf'(x(u))\,du.
\]
Therefore for $t\geq T_1(\epsilon)$ 
we have 
\begin{align*}
\int_0^t  Mf'(x(u))\,du 
&=  K_1(\epsilon) + \int_{T_1}^t \frac{f'(x(u))}{f(x(u))}\cdot\frac{Mf(x(u))}{x'(u)}x'(u)\,du\\
&\leq K_1(\epsilon) + \frac{1}{1-\epsilon}\int_{T_1}^t \frac{f'(x(u))}{f(x(u))}x'(u)\,du\\
&= K_1(\epsilon) + \frac{1}{1-\epsilon} \left( \log f(x(t))  - \log f(x(T_1))\right).
\end{align*}
Since $f(x)/x\to 0$ as $x\to\infty$, for every $\epsilon>0$ there is $T_2(\epsilon)>0$ and $K_2(\epsilon)>0$ 
such that 
\[
\int_0^t  Mf'(x(u))\,du  \leq K_2(\epsilon) + \frac{1}{1-\epsilon} \log x(t), \quad t\geq T_2(\epsilon).
\]
Next, we have $x(t)\leq F^{-1}(F(x(0))+Mt)$ for $t\geq 0$, so 
\[
\int_0^t  Mf'(x(u))\,du  \leq K_2(\epsilon) + \frac{1}{1-\epsilon} \log F^{-1}(F(x(0))+Mt), 
\quad t\geq T_2(\epsilon).
\] 
Finally, as $F^{-1}(c+Mt)\sim F^{-1}(Mt)$ as $t\to\infty$, we have 
\[
\limsup_{t\to\infty} \frac{\int_0^t  Mf'(x(u))\,du}{\log  F^{-1}(Mt)}\leq 1,
\]
and moreover 
\begin{equation} \label{eq.xestupper1}
\limsup_{t\to\infty} \frac{x(t)}{F^{-1}(Mt)}\leq 1.
\end{equation}
Hence for every $\epsilon\in (0,1)$ there is $T_3(\epsilon)>0$ such that $t\geq T_3(\epsilon)$ implies  
\[
F(x(t))-Mt\geq F(x(0))- \int_{0}^t \int_{[s,\infty)}\mu(du) \,ds - \int_{[0,t]} s  \mu(ds) (1+\epsilon) \log F^{-1}(Mt).
\]
Define 
\[
a_2(t)= -F(x(0)) + \int_{0}^t \int_{[s,\infty)}\mu(du) \,ds + \int_{[0,t]} s  \mu(ds) (1+\epsilon) \log F^{-1}(Mt).
\]
Then 
\[
\liminf_{t\to\infty} \frac{x(t)}{F^{-1}(Mt)}
\geq \liminf_{t\to\infty} \frac{F^{-1}(Mt-a_2(t))}{F^{-1}(Mt)}.
\]
Clearly $a_2(t)>0$ for all $t$ sufficiently large. Finally
\begin{multline*}
\frac{f(x)}{x}
a_2(F(x)/M) = \frac{f(x)}{x}\int_{0}^{F(x)/m} \int_{[s,\infty)}\mu(du) \,ds 
\\+ \int_{[0,F(x)/M]} s  \mu(ds) 
(1+\epsilon) \frac{f(x)}{x}\log x - \frac{f(x)}{x}F(x(0)),
\end{multline*}
so by  \eqref{eq.suff11} and \eqref{eq.suff21}, we have $a_2(F(x)/M)f(x)/x
\to 0$ as $x\to\infty$. Hence with $a_2$ in the role of $a$ in Lemma~\ref{lemma.lemmaa1}, we have 
\[
\lim_{t\to\infty}  \frac{F^{-1}(Mt-a_2(t))}{F^{-1}(Mt)} =1,
\]
and so 
\[
\liminf_{t\to\infty} \frac{x(t)}{F^{-1}(Mt)} \geq 1.
\]
Combining this with \eqref{eq.xestupper1} proves part (i). 

We now prove part (ii). From \eqref{eq.xrep0}, and the fact that $\delta_2(t)>0$ we have 
\[
x'(t)\leq Mf(x(t))-\delta_1(t)=Mf(x(t))-\epsilon_1(t)f(x(t)), \quad t\geq 0.
\]
Dividing by $f(x(t))$ and integrating gives
\begin{equation} \label{eq.xrep3}
x(t)\leq F^{-1}\left(F(x(0))+ Mt - \int_0^t \epsilon_1(s)\,ds \right),\quad t\geq 0.
\end{equation}
Since $\epsilon_1(t)=\int_{[t,\infty)} \mu(ds)$, we see that $\epsilon_1$ is positive, non--increasing and 
obeys $\epsilon_1(t)\to 0$ as $t\to\infty$. 
Therefore by \eqref{eq.suff10}
\begin{equation*} 
\lim_{x\to\infty} \frac{f(x)}{x}\int_{0}^{F(x)/M} \epsilon_1(s)\,ds
=\lim_{x\to\infty}\frac{f(x)}{x}\int_{[0,F(x)/M]} \int_{[s,\infty)}  \mu(du)\,ds
=+\infty
\end{equation*}
so the condition \eqref{eq.epsbig} in Lemma~\ref{lemma.la2} holds. Therefore as all conditions of Lemma~\ref{lemma.la2}
hold with $\epsilon_1$ in the role of $\epsilon$, we get
\[
\lim_{t\to\infty} \frac{F^{-1}\left(Mt - \int_0^t \epsilon_1(s)\,ds \right)}{F^{-1}(Mt)}=0.
\]
Finally, as $F^{-1}(c+Mt)\sim F^{-1}(Mt)$ as $t\to\infty$ for any $c\in\mathbb{R}$, it follows from 
\eqref{eq.xrep3} that 
\[
\limsup_{t\to\infty} \frac{x(t)}{F^{-1}(Mt)}
\leq 
\limsup_{t\to\infty} \frac{F^{-1}\left(Mt - \int_0^t \epsilon_1(s)\,ds \right)}{F^{-1}(Mt)}
=0,
\] 
and hence part (ii) of Theorem~\ref{thm.2.4} has been proven.

To prove part (iii), we revisit  \eqref{eq.xrep3}, namely 
\[
x(t)\leq F^{-1}\left(F(x(0))+ Mt - \int_0^t \epsilon_1(s)\,ds \right),\quad t\geq 0. 
\]
Since $F^{-1}(c+Mt)\sim F^{-1}(Mt)$ as $t\to\infty$ for any $c\in\mathbb{R}$, and by hypothesis we have 
$x(t)\sim F^{-1}(Mt)$ as $t\to\infty$, we see that 
\[
\liminf_{t\to\infty} \frac{F^{-1}\left(Mt - \int_0^t \epsilon_1(s)\,ds \right)}{F^{-1}(Mt)}\geq 1.
\] 
On the other hand, as $\epsilon_1(t)>0$ for all $t\geq 0$, we have the trivial limit
\[
\limsup_{t\to\infty} \frac{F^{-1}\left(Mt - \int_0^t \epsilon_1(s)\,ds \right)}{F^{-1}(Mt)}\leq 1,
\]
and so
\[
\lim_{t\to\infty} \frac{F^{-1}\left(Mt - \int_0^t \epsilon_1(s)\,ds \right)}{F^{-1}(Mt)}= 1.
\]
Now set $a(t)=\int_0^t \epsilon_1(s)\,ds$. Since $f'$ is decreasing on $[x_2,\infty)$, by Lemma~\ref{lemma.lemmaa1} part (b) it follows that 
\[
\lim_{x\to\infty} \frac{f(x)}{x}a\left(F(x)/M\right)=0, 
\]
which is precisely \eqref{eq.suff21}. This completes the proof of part (iii).
\end{proof}

\section{Proof of Theorem~\ref{thm.2.5}}
We start with the proof of a preliminary result. 

\begin{lemma} \label{lemma.asyeqivMints}
Suppose that $M$ is defined by 
\[
M(t-u,t)=\int_{[t-u,t]} \mu(ds), \quad t\geq u\geq 0,
\]
where $\mu\in M([0,\infty);\mathbb{R}^+)$. Suppose that $b$ and $c$ are continuous functions with 
$b(t)\sim c(t)$ as $t\to\infty$ with $0<c(t)\to\infty$ as 
$t\to\infty$. Then 
\[
\int_{0}^t M(t-u,t)c(u)\,du \to
\infty \text{ as $t\to\infty$}, 
\]
and 
\[
\int_{0}^t M(t-u,t)b(u)\,du  \sim \int_{0}^t M(t-u,t)c(u)\,du \text{ as $t\to\infty$}.
\]
\end{lemma}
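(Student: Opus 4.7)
The plan is to reduce the asymptotic equivalence to a standard tail-splitting argument based on $b\sim c$, after rewriting both integrals via Fubini in a form that exposes the divergence of the denominator. The single nontrivial ingredient is proving $I(t):=\int_0^t M(t-u,t) c(u)\,du \to \infty$ as $t\to\infty$.

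First I would apply Fubini's theorem to get
\[
\int_0^t M(t-u,t) c(u)\,du = \int_{[0,t]} \int_{t-v}^t c(u)\,du\, \mu(dv),
\]
and analogously with $b$ in place of $c$. This representation makes it clear that only mass of $\mu$ away from the origin contributes. To prove $I(t)\to\infty$, I would select $0<v_0\leq V$ with $\mu([v_0,V])>0$; such a choice is possible whenever $\mu$ is not a point mass at $\{0\}$, which is the case of interest (in Theorem~\ref{thm.2.5} the underlying measure has infinite first moment and therefore mass strictly away from $0$). For $t\geq V$ and $v\in[v_0,V]$ one has $\int_{t-v}^t c(u)\,du \geq v_0 \min_{u\in[t-V,t]} c(u)$, and the right-hand side tends to $+\infty$ since $c(u)\to\infty$. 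Hence $I(t)\geq v_0\mu([v_0,V])\min_{u\in[t-V,t]} c(u)\to\infty$.

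For the asymptotic equivalence, fix $\epsilon>0$ and pick $T$ with $|b(u)-c(u)|\leq \epsilon c(u)$ for $u\geq T$. Splitting each integral at $u=T$ and using $M(t-u,t)\leq \mu([0,\infty))$ together with continuity of $b$ and $c$ on $[0,T]$, the contributions over $[0,T]$ are bounded uniformly in $t$ by a constant $K=K(T,b,c,\mu)$. Setting $I_b(t):=\int_0^t M(t-u,t)b(u)\,du$, the tail estimate
\[
\left| \int_T^t M(t-u,t)(b(u)-c(u))\,du\right| \leq \epsilon \int_T^t M(t-u,t) c(u)\,du \leq \epsilon I(t)
\]
yields $|I_b(t)-I(t)|\leq 2K+\epsilon I(t)$. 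Dividing by $I(t)$, taking $\limsup_{t\to\infty}$, and using $I(t)\to\infty$ to kill the bounded term, one obtains $\limsup_{t\to\infty}|I_b(t)/I(t)-1|\leq \epsilon$; letting $\epsilon\to 0^+$ gives the required asymptotic equivalence.

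The main obstacle is the divergence of $I(t)$: without some mass of $\mu$ strictly away from the origin the claim is false (e.g.\ $\mu=\delta_0$ gives $I(t)\equiv 0$), so the argument must explicitly extract a quantitative lower bound on $I(t)$ from the restriction of $\mu$ to $(0,\infty)$. Once this is in hand, the remainder of the proof is a routine $\epsilon$-$T$ argument of the kind familiar from the asymptotic analysis of convolutions with slowly varying integrands.
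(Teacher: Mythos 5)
Your proof is correct and follows essentially the same route as the paper's: establish divergence of the $c$--integral by extracting mass of $\mu$ away from the origin, then run the standard $\epsilon$--$T$ splitting argument (bounded head, tail controlled by $\epsilon I(t)$) and divide by the divergent integral. The only real difference is in the divergence step, where the paper lower-bounds via $\int_{t-1}^t M(t-u,t)\,du=\int_{[0,t]}\{1\wedge s\}\,\mu(ds)$ and appeals to ``positivity of $\mu$'', whereas you make explicit the need for mass strictly away from $0$ --- a point worth stating, since as you observe the lemma fails literally for $\mu=\delta_0$, a case the paper excludes only through its tacit standing assumption that the first moment $C$ is positive.
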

\begin{proof}
Define 
\[
\delta(t)=\int_{0}^t M(t-u,t)b(u)\,du, \quad \tilde{\delta}(t)=\int_{0}^t M(t-u,t)c(u)\,du, \quad t\geq 0.
\]
Now for $t\geq 1$ we have 
\[
\tilde{\delta}(t)=\int_{0}^t M(t-u,t)c(u)\,du\geq \int_{t-1}^t M(t-u,t)\,du \cdot \inf_{u\in [t-1,t]}c(u).
\]
Since 
\[
\int_{t-1}^t M(t-u,t)\,du = \int_{t-1}^t \int_{[t-u,t]} \mu(ds)\,du
=\int_{0}^1 \int_{[v,t]} \mu(ds)\,dv =\int_{[0,t]} \{1\wedge s\} \mu(ds), 
\]
the positivity of $\mu$ implies that $\tilde{\delta}(t)\to\infty$ as $t\to\infty$.

Since $b(t)\sim c(t)$ it follows for every $\epsilon\in (0,1)$ there is $T_1(\epsilon)>0$ such that 
$(1-\epsilon)c(t)<b(t)<(1+\epsilon)c(t)$ for all $t\geq T_1(\epsilon)$. Hence for $t\geq T_1(\epsilon)$ we have
by the positivity of $b$ and $c$ on $[T_1,\infty)$ and $M$ on its domain that 
\[
(1-\epsilon)\int_{T_1}^t M(t-u,t)c(u)\,du \leq \int_{T_1}^t M(t-u,t)b(u)\,du \leq 
(1+\epsilon)\int_{T_1}^t M(t-u,t)c(u)\,du 
\]
Now 
\[
\int_{T_1}^t M(t-u,t)c(u)\,du = \tilde{\delta}(t) - \int_0^{T_1} M(t-u,t)c(u)\,du
\]
Thus, as $M(t-u,t)\leq M$ and $c$ is non--negative, we have for $t\geq T_1$ 
with $C(\epsilon):=M\int_0^{T_1} c(u)\,du$, 
\[
\int_{T_1}^t M(t-u,t)c(u)\,du \leq \tilde{\delta}(t), \quad
\int_{T_1}^t M(t-u,t)c(u)\,du \geq \tilde{\delta}(t) - C(\epsilon).
\]
Hence for $t\geq T_1$ we have 
\[
(1-\epsilon)\left(\tilde{\delta}(t) - C(\epsilon)  \right) \leq \int_{T_1}^t M(t-u,t)b(u)\,du \leq 
(1+\epsilon)\tilde{\delta}(t).
\]
Thus by the definition of $\delta$ we have for $t\geq T_1(\epsilon)$ 
\[
(1-\epsilon)\left(\tilde{\delta}(t) - C(\epsilon)  \right)
\leq 
 \delta(t)-\int_0^{T_1} M(t-u,t)b(u)\,du
\leq 
(1+\epsilon)\tilde{\delta}(t).
\]
Define 
\[
B(\epsilon):= M\int_0^{T_1} |b(u)|\,du<+\infty.
\]
Then 
\[
\left|
\int_0^{T_1} M(t-u,t)b(u)\,du \right|\leq B(\epsilon),
\]
and so for $t\geq T_1(\epsilon)$ 
\[
(1-\epsilon)\left(\tilde{\delta}(t) - C(\epsilon)  \right) -B(\epsilon)
\leq  \delta(t)\leq 
(1+\epsilon)\tilde{\delta}(t) + B(\epsilon).
\]
Dividing by $\tilde{\delta}(t)$, letting $t\to\infty$ and remembering that $\tilde{\delta}(t)\to\infty$ as $t\to\infty$
we get 
\[
1-\epsilon \leq \liminf_{t\to\infty} \frac{\delta(t)}{\tilde{\delta}(t)}\leq 
\limsup_{t\to\infty} \frac{\delta(t)}{\tilde{\delta}(t)}\leq 1+\epsilon.
\]
Letting $\epsilon\to 0^+$ completes the proof.
\end{proof}

\begin{proof}[Proof of Theorem~\ref{thm.2.5}]
We note that $\delta_2$ is given by  
\[
\delta_2(t)=\int_{0}^t M(t-u,t) f'(x(u))x'(u)\,du, \quad t\geq 0.
\]
Define also 
\[
\epsilon_2(t)=\frac{1}{f(x(t))}\delta_2(t), \quad t\geq 0.
\]
Then as $f$ is increasing on $[0,\infty)$, we have that $\epsilon_2(t)>0$ for all $t> 0$ 
and we have from \eqref{eq.xrep0} that 
\begin{equation*} 
x'(t)=Mf(x(t))-\epsilon_1(t)f(x(t))-\epsilon_2(t)f(x(t)), \quad t\geq 0.
\end{equation*}
Dividing by $f(x(t))$ and integrating yields
\[
x(t)=F^{-1}\left(F(x(0))+Mt-a(t)\right), \quad t\geq 0,
\]
where 
\[
a(t):=a_1(t)+a_2(t)= \int_0^t \epsilon_1(s)\,ds + \int_0^t \epsilon_2(s)\,ds, \quad t\geq 0.
\]
Since $x(t)\sim F^{-1}(Mt)$ as $t\to\infty$ and $f'$ is decreasing, we can replicate the proof of part (iii)
 of Theorem~\ref{thm.2.4} to get
\[
\lim_{x\to\infty} \frac{f(x)}{x}a\left(\frac{F(x)}{M}\right)=0.
\]  
Since both $a_1$ and $a_2$ are positive, this implies 
\[
\lim_{x\to\infty} \frac{f(x)}{x}a_1\left(\frac{F(x)}{M}\right)=0, \quad
\lim_{x\to\infty} \frac{f(x)}{x}a_2\left(\frac{F(x)}{M}\right)=0.
\]
The first condition is nothing but \eqref{eq.suff21}. 

We now determine the asymptotic behaviour of $a_2(t)$ as $t\to\infty$, and show that the second limit implies 
\eqref{eq.suff31}. First, because $x'(t)\sim Mf(x(t))$ 
as $t\to\infty$ and $f'(x)\sim f(x)/x$ as $x\to\infty$ 
we have that 
\[
b(t):=f'(x(t))x'(t)\sim f'(x(t))Mf(x(t)) \sim M \frac{f^2(x(t))}{x(t)}=:c_1(t) \text{ as $t\to\infty$}.
\]
Set $g(x):=f^2(x)/x$: then $g\in \text{RV}_\infty(1)$. Therefore as $x(t)\sim F^{-1}(Mt)$ 
as $t\to\infty$, we have that $g(x(t))\sim g(F^{-1}(Mt))$ as $t\to\infty$. Hence 
\[
b(t)\sim c_1(t)=M g(x(t)) = M\frac{f^2(F^{-1}(Mt))}{F^{-1}(Mt)}=:c(t) \text{ as $t\to\infty$}.
\] 
Moreover, $c(t)\to\infty$ as $t\to\infty$. Thus by Lemma~\ref{lemma.asyeqivMints}, we have that 
\[
\delta_2(t)=\int_0^t M(t-u,t)b(u)\,du \sim \int_0^t M(t-u,t)c(u)\,du=:\tilde{\delta}(t) \text{ as $t\to\infty$}.
\]
Since $x(t)\sim F^{-1}(Mt)$ as $t\to\infty$, we have $f(x(t))\sim f(F^{-1}(Mt))$ as $t\to\infty$. 
Therefore, by the definition of $\epsilon_2$, $\tilde{\delta}$ and $c$ we have
\[
\epsilon_2(t)\sim 
\frac{M}{f(F^{-1}(Mt))} \int_0^t M(t-u,t) \frac{f^2(F^{-1}(Mu))}{F^{-1}(Mu)}\,du=:\tilde{\epsilon}_2(t), 
\quad \text{ as $t\to\infty$}.
\] 
By making the substitution $v=F^{-1}(Mt)$ and $w=F^{-1}(Mu)$ in the iterated integral, 
for any $T>0$ we have from \eqref{def.K} that 
\begin{align*}
\int_0^T \tilde{\epsilon}_2(t)\,dt &= M\int_0^T \int_0^t 
\frac{M}{f(F^{-1}(Mt))} \int_0^t M(t-u,t) \frac{f^2(F^{-1}(Mu))}{F^{-1}(Mu)}\,du\,dt\\
&=M\int_1^{F^{-1}(MT)} K(v)\frac{1}{f^2(v)}\,dv.
\end{align*}

If $x\mapsto \int_1^{x} K(v)\frac{1}{f^2(v)}\,dv$ tends to a finite limit, 
then we have 
\[
\lim_{x\to\infty} \frac{f(x)}{x} \int_1^{x} K(v)\frac{1}{f^2(v)}\,dv=0,
\]
which gives \eqref{eq.suff31} directly. 

On the other hand, if $x\mapsto \int_1^{x} K(v)\frac{1}{f^2(v)}\,dv$ tends to $+\infty$ as $x\to\infty$, 
because
\[
M\int_1^{x} K(v)\frac{1}{f^2(v)}\,dv=
\int_0^{F(x)/M} \tilde{\epsilon}_2(t)\,dt 
\]
we have that $\tilde{\epsilon}_2(t)\to\infty$ as $t\to\infty$. Since $\epsilon_2(t)\sim \tilde{\epsilon}_2(t)$, we have that 
\[
a_2(t)=\int_0^t \epsilon_2(s)\,ds \sim \int_0^t \tilde{\epsilon}_2(s)\,ds \to\infty \text{ as $t\to\infty$}.
\]
Thus 
\[
a_2(F(x)/M) \sim \int_0^{F(x)/M} \tilde{\epsilon}_2(s)\,ds = M\int_1^{x} K(v)\frac{1}{f^2(v)}\,dv
\quad\text{ as $x\to\infty$}.
\]
Therefore, as 
\[
\lim_{x\to\infty} \frac{f(x)}{x}a_2(F(x)/M)=0,
\]
we have 
\[
\lim_{x\to\infty} \frac{f(x)}{x}\int_1^{x} K(v)\frac{1}{f^2(v)}\,dv=0,
\]
which is  \eqref{eq.suff31}, as required.
\end{proof}

\bibliography{references}
\bibliographystyle{abbrv}

\end{document}